\def\?[#1]{\textbf{[#1]}\marginpar{\Large{\textbf{??}}}}
\let\epsilon=\varepsilon % sorry Knuth
\newtheorem{theorem}{Theorem}
\newtheorem{proposition}{Proposition}%[section]
\newtheorem{definition}[proposition]{Definition}
\newtheorem{lemma}[proposition]{Lemma}
\newtheorem{corollary}[proposition]{Corollary}
\newtheorem{remark}{Remark}
\newtheorem{example}{Example}
\numberwithin{equation}{section}
\title{Moser's Method and Conservative Extensions of Diffeomorphisms}
\author{James Leng}
\date{February 2020}
\begin{document}

\maketitle

\begin{abstract}
This paper shall be concerned with three main results. After a brief recollection of basic symplectic geometry, we prove using Moser's \textit{homotopy method} a special case of the \textit{Strong Darboux Theorem} found, for instance, in \cite[Theorem 21.1.6]{Hor}. Next, we'll prove two conservative extension results for a diffeomorphism on a circle. One uses Moser's homotopy method but loses a degree of regularity. The other uses the method of generating functions as found in \cite{BCW} and \cite{BGV}. Finally, we'll prove a conservative extension result for a ``diffeomorphism" defined on the boundary of $(0, 1)^2$ and use the techniques developed there and by \cite{M} to prove an ambient Dacarogna-Moser Theorem.
\end{abstract}

\section{Introduction and Notation}
This paper will be divided into three parts. The first part will give a brief introduction to symplectic geometry. If one wants a more comprehensive introduction, we refer them to \cite{SM} or \cite{DS}. The subject of this and the next section is \textit{Moser's Method}, a method commonly seen in Darboux's theorem in symplectic geometry. Moser's trick can be summed up by the following principle: \\\\
\textit{When trying to find a diffeomorphism, try constructing an isotopy for the diffeomorphism, and to construct such an isotopy, find the vector field associated with the isotopy}. \\\\
There we will present the \textit{Darboux Theorem} and Moser's homotopy method which was first found in \cite{M}. For readers who wish to have a more comprehensive introduction to symplectic geometry, we recommend \cite{SM} and \cite{DS}. \\\\
The second part will contain a proof of a special case of \thref{StrongDarboux}, the \textit{Strong Darboux Theorem} which can be found in \cite[Theorem 21.1.6]{Hor}. The main result is a proof of the following using Moser's Homotopy Method.
\begin{theorem}\thlabel{OneVariableDarboux}
Let $p: \mathbb{R}^{2n} \to \mathbb{R}$ be a smooth function with $\frac{\partial p}{\partial x_1}(x, y) > 0$ for all $(x, y)$ in some open set $O$. Then there exists a neighborhood $U$ in $\mathbb{R}^{2n}$ of $(x, y)$ and a symplectomorphism $f: U \to \mathbb{R}^{2n}$ such that $f^*p = x_1$.
\end{theorem}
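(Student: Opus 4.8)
The plan is to prove this by Moser's homotopy method, applied in a slightly unusual guise: rather than deforming the symplectic form, I keep $\omega_0=\sum_{j=1}^{n}dx_j\wedge dy_j$ fixed and deform the \emph{function}, interpolating linearly from $x_1$ to $p$ and realizing the interpolation by a symplectic isotopy. Concretely, set $z_0:=(x,y)\in O$ and, for $t\in[0,1]$, let $p_t:=(1-t)\,x_1+t\,p$, so that $p_0=x_1$, $p_1=p$, and $\dfrac{\partial p_t}{\partial x_1}=(1-t)+t\,\dfrac{\partial p}{\partial x_1}>0$ on all of $O$. The goal is to produce a smooth isotopy $(f_t)_{t\in[0,1]}$, defined on one fixed neighborhood $U\ni z_0$, with $f_0=\mathrm{id}$, each $f_t$ a symplectic embedding, and $p_t\circ f_t=x_1$ for every $t$. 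Then $f:=f_1$ is the required symplectomorphism, since $p_1\circ f_1=x_1$ and $p_1=p$.

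Next I would extract the infinitesimal equation. Writing $V_t$ for the (time-dependent) vector field generating $f_t$ and differentiating $p_t\circ f_t=x_1$ in $t$ gives $\partial_t p_t+V_t(p_t)=0$, i.e.\ $V_t(p_t)=x_1-p$ since $\partial_t p_t=p-x_1$. I would look for $V_t$ of Hamiltonian form $V_t=X_{H_t}$; because the flow of any Hamiltonian vector field preserves $\omega_0$, this makes each $f_t$ automatically symplectic, so the only remaining condition is the one on $p_t$. With the standard conventions $\iota_{X_H}\omega_0=-dH$ and $\{F,G\}=X_F(G)$ one has $V_t(p_t)=\{H_t,p_t\}=-X_{p_t}(H_t)$, so the requirement becomes the linear transport equation
\[
  X_{p_t}(H_t)=p-x_1 .
\]
This is exactly where the hypothesis is used: since $\partial p_t/\partial x_1>0$, the Hamiltonian field $X_{p_t}$ is nowhere zero near $z_0$ — indeed $X_{p_t}(y_1)=-\,\partial p_t/\partial x_1\neq 0$ — and the hyperplane through $z_0$ on which $y_1$ is constant is transverse to $X_{p_t}$ for \emph{every} $t$ simultaneously. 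Solving by the method of characteristics (impose $H_t\equiv 0$ on that hyperplane and integrate $p-x_1$ along the flow lines of $X_{p_t}$) yields a function $H_t(z)$ that is smooth jointly in $(t,z)$ on a neighborhood of $z_0$.

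Finally I would integrate the nonautonomous ODE $\dfrac{d}{dt}f_t=X_{H_t}\circ f_t$, $f_0=\mathrm{id}$, over $t\in[0,1]$, obtaining the isotopy; then $f:=f_1$ satisfies $f^*\omega_0=\omega_0$ and $f^*p=p_1\circ f_1=x_1$ on a sufficiently small neighborhood $U$ of $z_0$. I expect the only genuine obstacle to be uniformity in the parameter $t$: one must arrange that the transversal hyperplane, the characteristic flow defining $H_t$, the function $H_t$ itself, and the isotopy $f_t$ are all defined on one and the same neighborhood of $z_0$ for the whole interval $t\in[0,1]$. This follows from compactness of $[0,1]$ together with smooth dependence of solutions of ODEs on parameters and initial conditions — shrinking $U$ finitely many times — but it is the point to state carefully rather than wave through. (An alternative avoiding the parametric flow: solve $X_p(q)=1$ for a single function $q$ conjugate to $p$, note that a common level set of $(p,q)$ is a codimension-two symplectic submanifold, apply the Darboux theorem of the previous section there, and propagate the remaining coordinates off that submanifold along the commuting flows of $X_p$ and $X_q$, verifying the canonical bracket relations via the Jacobi identity; but the homotopy argument above is the one in the spirit of this paper.)
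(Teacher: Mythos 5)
Your argument is correct, but it takes a genuinely different route from the paper's. The paper first invokes the implicit function theorem to manufacture a (non-symplectic) diffeomorphism $g(x,y)=(h(x,y),x',y)$ with $p\circ g=x_1$, sets $\omega_1=g^*\omega_0$, and then runs Moser's trick on the \emph{family of forms} $\omega_t=(1-t)\omega_0+t\omega_1$, insisting that the correcting vector field $v_t$ have zero $x_1$-component so that the final composition with $g$ still pulls $p$ back to $x_1$; the infinitesimal equation becomes a scalar first-order PDE $\nabla k_t\cdot n = h-x_1$ for the potential $k_t$, solved by characteristics. You instead keep $\omega_0$ fixed, deform the \emph{function} $p_t=(1-t)x_1+tp$, and look for a Hamiltonian isotopy $f_t$ with $p_t\circ f_t\equiv x_1$; the symplectic condition is then automatic, and Moser's infinitesimal step collapses to the transport equation $X_{p_t}(H_t)=p-x_1$, solved by characteristics along $X_{p_t}$ using the transversality $X_{p_t}(y_1)=-\partial p_t/\partial x_1\neq 0$. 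Your route buys conceptual economy: no implicit-function step, no explicit computation of $g^*\omega_0$ or of the vector $n$, and the whole construction lives inside the group of symplectomorphisms from the start, with the hypothesis $\partial p/\partial x_1>0$ appearing exactly once as non-vanishing of $X_{p_t}(y_1)$. What the paper's route buys is that it fits the template of the Darboux proof already on the page (deform the form, not the function) and produces the underlying non-symplectic straightening $g$ as an explicit intermediate object, which is closer in spirit to the statement of the Strong Darboux Theorem it is a special case of. One caveat to flag in your write-up, which you correctly identify as the delicate point: unlike the relative Darboux proof earlier in the paper, the Hamiltonian vector field $X_{H_t}$ need not vanish at $z_0$ (since $H_t\equiv0$ on the transversal $\Sigma$ does not force $dH_t=0$ there), so ``shrinking $U$'' alone does not automatically make the flow exist up to time $1$ — one should either cut off $H_t$ to be compactly supported (accepting that $p_t\circ f_t=x_1$ then holds only on the shrinking region where the cutoff is identically one and the trajectory stays inside it), or argue via a time-of-escape estimate; the paper's own proof of this theorem elides the same issue with the same phrase, so this is a shared gap rather than one specific to your approach, but it deserves the sentence of care you promise it.
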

The third and final part of the paper concerns the extension of diffeomorphisms defined on closed submanifolds or subsets. The first two results we will prove are 
\begin{theorem}\thlabel{permute}
Let $S^1$ be considered as a submanifold of $\mathbb{R}^2$. Let $f: S^1 \to S^1$ be a $C^{k + 1}$ (with $k > 0$ an integer) diffeomorphism. Then $f$ extends to a $C^k$ symplectomorphism $\varphi: \mathbb{R}^2 \to \mathbb{R}^2$. Furthermore, the extension $\varphi$ near a neighborhood of $S^1$ is of the form $\varphi(r, \theta) = (g(r, \theta), f(\theta))$ for some $g$.
\end{theorem}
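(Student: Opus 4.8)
The plan is to pass to symplectic polar coordinates, where the problem reduces to a one–dimensional computation, solve that computation explicitly to obtain the required germ of $\varphi$ along $S^1$, and then globalize by Moser's method. Write $\omega=dx\wedge dy$ for the standard area form. \textbf{Step 1 (symplectic polar coordinates).} On $\mathbb{R}^2\setminus\{0\}$ use coordinates $(\rho,\theta)$ with $\rho=\tfrac12 r^2$, so that $\omega=r\,dr\wedge d\theta=d\rho\wedge d\theta$ and $S^1=\{\rho=\tfrac12\}$. Note that $f$ is necessarily orientation preserving: a diffeomorphism of $\mathbb{R}^2$ with $\varphi^*\omega=\omega$ and $\varphi(S^1)=S^1$ maps the closed unit disk to itself and hence restricts to $S^1$ orientation–preservingly. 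Lift $f$ to $\tilde f\colon\mathbb{R}\to\mathbb{R}$ with $\tilde f(\theta+2\pi)=\tilde f(\theta)+2\pi$; then $\tilde f\in C^{k+1}$, $\tilde f'>0$, and $\int_0^{2\pi}\tilde f'(\theta)\,d\theta=2\pi$. We look for $\varphi$ which near $S^1$ has the product form $\varphi(\rho,\theta)=(a(\rho,\theta),\tilde f(\theta))$, exactly the form claimed in the theorem.

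\textbf{Step 2 (the local model).} For a map of this form, $\varphi^*\omega=da\wedge d\tilde f=\bigl(\tilde f'(\theta)\,\partial_\rho a\bigr)\,d\rho\wedge d\theta$, so $\varphi^*\omega=\omega$ forces $\partial_\rho a=1/\tilde f'(\theta)$; imposing $\varphi|_{S^1}=f$, that is $a(\tfrac12,\theta)=\tfrac12$, then determines $a$ uniquely:
\[
a(\rho,\theta)=\tfrac12+\frac{\rho-\tfrac12}{\tilde f'(\theta)}.
\]
Call the resulting map $\varphi_0=(a,\tilde f)$. Its Jacobian determinant is identically $1$ and it is injective ($\tilde f$ being injective on $S^1$ and $a$ strictly increasing in $\rho$), so for $\delta>0$ small it is a symplectomorphism of $\{\rho>\tfrac12-\delta\}$ onto its image; it restricts to $f$ on $S^1$; in the original coordinates it reads $\varphi_0(r,\theta)=\bigl(\sqrt{1+(r^2-1)/\tilde f'(\theta)},\,\tilde f(\theta)\bigr)$, as required; and it is $C^k$ but no better, since only $\tilde f'$ — not $\tilde f$ itself — appears in $a$. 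This is where the advertised single derivative of smoothness is lost.

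\textbf{Step 3 (globalizing).} The map $\varphi_0$ carries $\{\rho\ge\tfrac12-\delta\}$ diffeomorphically onto $\mathbb{R}^2\setminus\operatorname{int}\overline{D}$, where $\overline{D}=\{(\rho',\theta'):0\le\rho'\le a(\tfrac12-\delta,\tilde f^{-1}(\theta'))\}$ is a compact topological disk around the origin; so it remains to extend $\varphi_0$ across the disk $\{\rho\le\tfrac12-\delta\}$ to a symplectomorphism onto $\overline{D}$ that agrees with $\varphi_0$ to order $k$ along the interface circle. The essential point is that the two disks have equal area:
\[
\operatorname{area}\overline{D}=\int_0^{2\pi}a(\tfrac12-\delta,\theta)\,\tilde f'(\theta)\,d\theta=\int_0^{2\pi}\Bigl(\tfrac{\tilde f'(\theta)}{2}-\delta\Bigr)d\theta=\pi-2\pi\delta=\operatorname{area}\{\rho\le\tfrac12-\delta\},
\]
which is exactly where $\int_0^{2\pi}\tilde f'=2\pi$ is used. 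Hence Moser's method produces a symplectomorphism between the two disks agreeing with $\varphi_0$ near the boundary; running the argument as a Dacorogna--Moser--type statement with prescribed boundary data recovers the derivative that pulling the area form back through a $C^k$ map would otherwise cost, so the fill is again $C^k$. Gluing it to $\varphi_0$ on $\{\rho\ge\tfrac12-\delta\}$ yields the desired $C^k$ symplectomorphism $\varphi\colon\mathbb{R}^2\to\mathbb{R}^2$ — the origin being an interior point of the filled disk, no difficulty arises there — and $\varphi=\varphi_0$ near $S^1$.

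The heart of the matter is Step 3: one has to check the area identity (so that Moser's method can be invoked at all) and then carry out the filling with the prescribed boundary behavior and the correct bookkeeping of derivatives, since the naive homotopy flow on the disk would cost an extra derivative rather than the sharp one. By contrast, the polar reduction of Step 1 and the explicit local model of Step 2 are routine, and the one unavoidable loss of smoothness is already manifest in the formula for $a$.
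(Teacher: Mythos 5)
Your approach is genuinely different from the paper's. The paper never faces a disk-filling problem: it first produces a $C^{k+1}$ but non-symplectic global extension $g(s,\theta)=(s,(1-\chi(s))f(\theta)+\chi(s)\theta)$ of $f$ on the cylinder, then runs Moser's homotopy method. Since $g\in C^{k+1}$, the form $g^*(ds\wedge d\theta)$ has $C^{k}$ coefficient, the Moser vector field and hence its flow $\rho_1$ are $C^{k}$, and $g\circ\rho_1$ is a $C^{k}$ symplectomorphism equal to $(s,f(\theta))$ near $s=1$; only one derivative is lost. Your Steps~1 and~2 are correct and in fact sharpen the ``furthermore'' clause: the formula $a(\rho,\theta)=\tfrac12+(\rho-\tfrac12)/\tilde f'(\theta)$ is the unique local area-preserving extension of the prescribed product form, and the loss of a derivative is transparent from the appearance of $\tilde f'$.

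Step~3, however, contains a genuine gap, precisely where you flag the heart of the matter. To fill in the inner disk you would build a $C^{k}$ diffeomorphism $\psi_1$ of the disk that matches $\varphi_0$ near the interface circle and then flow $\psi_1^*\omega$ back to $\omega$ by a Moser isotopy fixing a collar of the boundary. But because $\psi_1$ must agree with $\varphi_0$ near the interface and $\varphi_0$ is only $C^{k}$ (through $\tilde f'$), the density $\det d\psi_1$ is in general only $C^{k-1}$: the term $\partial_\theta a=-(\rho-\tfrac12)\tilde f''/(\tilde f')^2$ is $C^{k-1}$, and although it cancels in $\det d\varphi_0$ because $\varphi_0$ is a product map with vanishing $\partial_\rho$ of the second component, a fill that actually covers the origin cannot stay in that product form, so the $C^{k-1}$ term reappears in $\det d\psi_1$. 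A Moser flow built from $C^{k-1}$ data is $C^{k-1}$, so your fill and hence your final map would be $C^{k-1}$, one degree worse than claimed and worse than the paper achieves. The assertion that a Dacorogna--Moser type statement ``recovers the derivative'' cannot be invoked here: that gain requires Hölder regularity $C^{k,\alpha}$ with $\alpha>0$, and the paper itself cites Burago--Kleiner and McMullen to the effect that the gain fails in the plain $C^{k}$ scale. To close the gap you would need either an extra Hölder hypothesis on $f$, or a construction --- like the paper's extend-then-flow scheme --- that never creates a disk-filling problem in the first place.
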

\begin{theorem}\thlabel{Circle}
Let $f:S^1 \to S^1$ be a $C^k$ diffeomorphism with $k \ge 1$ an integer. Then there exists a $C^k$ symplectomorphism $\varphi: \mathbb{R}^2 \to \mathbb{R}^2$ such that $\varphi|_{S^1} = f$.
\end{theorem}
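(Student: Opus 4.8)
The plan is to localize near $S^1$. Since $S^1$ is a Lagrangian submanifold of $(\mathbb{R}^2,dx\wedge dy)$, an explicit two-dimensional version of Weinstein's Lagrangian neighborhood theorem is available: on $\mathbb{R}^2\setminus\{0\}$ put coordinates $(\theta,u)$ with $r=\sqrt{1+2u}$, so that $dx\wedge dy = r\,dr\wedge d\theta = du\wedge d\theta$ and $S^1=\{u=0\}$ is the zero section of $T^*S^1$. It therefore suffices to build a $C^k$ symplectomorphism $\psi$ of $S^1\times(-\tfrac12,\infty)$ (with form $du\wedge d\theta$) that restricts to $f$ on $\{u=0\}$ and equals the identity for $|u|\ge 2\delta$, for some small $\delta<\tfrac16$; gluing $\psi$ with the identity on $\{r\le\sqrt{1-6\delta}\}$ and on $\{r\ge\sqrt{1+6\delta}\}$ then produces the required $\varphi\colon\mathbb{R}^2\to\mathbb{R}^2$. (One must assume $f$ orientation-preserving: a symplectomorphism of $\mathbb{R}^2$ carrying $S^1$ to itself preserves the disk it bounds and hence the boundary orientation, so an orientation-reversing $f$ admits no symplectic extension at all.)

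The key idea — and the reason this beats Moser's method, which differentiates $f$ and so loses a degree of regularity as in \thref{permute} — is to encode $\psi$ by a \emph{type-2 generating function built from an antiderivative of $f$}: integration gains one derivative while recovering the symplectomorphism from $dS$ costs one, for zero net loss. Lift $f$ to $F\colon\mathbb{R}\to\mathbb{R}$ with $F'>0$, $F(\theta+2\pi)=F(\theta)+2\pi$, and let $\mathcal F'=F$, so $\mathcal F\in C^{k+1}$. The generating function $S_0(\theta,P)=\int_0^P F(\theta-p)\,dp=\mathcal F(\theta)-\mathcal F(\theta-P)$ (recover the map from $u=\partial_\theta S_0$, $\Theta=\partial_P S_0$, $\mathcal P=P$) gives
\[
\varphi_0(\theta,u)=\big(F(\theta)-u,\ \theta-F^{-1}(F(\theta)-u)\big),
\]
a symplectomorphism of $S^1\times\mathbb{R}$ which is $C^k$ (it involves $F,F^{-1}$ but no derivatives; equivalently $S_0\in C^{k+1}$) and restricts to $\theta\mapsto F(\theta)$ on $\{u=0\}$ — but is nowhere the identity. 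To cut it off I would pass to $S(\theta,P)=\theta P+\beta(P)\big(\mathcal F(\theta)-\mathcal F(\theta-P)-\theta P\big)$ with $\beta$ a cutoff that is $1$ near $0$ and $0$ for $|P|\ge\delta$; the resulting $\psi$ then equals $\varphi_0$ near $\{u=0\}$, equals the identity for $|u|\ge 2\delta$, and is $C^k$.

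The one real obstacle is that for $S$ to define an \emph{honest} symplectomorphism one needs the nondegeneracy $\partial^2_{\theta P}S\neq 0$ everywhere, and in the transition zone ($\beta'\neq 0$) one finds $\partial^2_{\theta P}S = 1+\beta'(P)\big(F(\theta)-F(\theta-P)-P\big)+\beta(P)\big(F'(\theta-P)-1\big)$, which is $\ge 1-C\|F'-1\|_\infty$ and hence positive only when $F$ is $C^1$-close to the identity. I would remove this restriction by fragmentation. Since $\operatorname{Diff}^+(S^1)$ is connected — explicitly, $F_t=(1-t)\,\mathrm{id}+tF$ is a $C^k$ isotopy from $\mathrm{id}$ to $F$ — one can write $F=g_N\circ\cdots\circ g_1$ with $g_j=F_{j/N}\circ F_{(j-1)/N}^{-1}=\mathrm{id}+\tfrac1N h_j$, each $h_j\in C^k$ with $C^1$-norm bounded uniformly in $j$, so each $g_j$ is $C^k$ and $C^1$-close to $\mathrm{id}$ for $N$ large; crucially this uses only compositions and inverses of $F$, not its derivatives, so no regularity is lost. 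Running the construction above on each $g_j$ yields $C^k$ symplectomorphisms $\Psi_j$ of $\mathbb{R}^2$, each equal to the identity off a fixed compact annulus, and $\varphi:=\Psi_N\circ\cdots\circ\Psi_1$ is a $C^k$ symplectomorphism of $\mathbb{R}^2$ with $\varphi|_{S^1}=f$. (Alternatively: factor $f=f_0\circ(f_0^{-1}\circ f)$ with $f_0$ smooth and $C^k$-close to $f$, extend $f_0$ by \thref{permute}, and apply the generating-function step once, to the near-identity factor $f_0^{-1}\circ f$.) Everything else is routine: a type-2 generating map is symplectic because $u\,d\theta+\Theta\,dP=dS$ is exact on its graph; the regularity bookkeeping is immediate once $S\in C^{k+1}$; $\psi$ is bijective since it is symplectic and equals the identity off a compact set; and the matching across the gluing circles is trivial since $\varphi$ is literally the identity there.
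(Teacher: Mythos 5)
Your proof is correct and takes a genuinely different route from the paper's, even though both go through generating functions on the cylinder. The paper builds $S(x,y)=xy+\rho(y)Q(x,y)$ with $Q(x,y)=y\int_0^x\!\int\chi(t)\,a(s-ty)\,dt\,ds$ for $a=f'-1$: the momentum variable $y$ is used as a mollification scale, and the extra derivative is recovered because the formula for $\partial_y Q$ involves only the antiderivative $\int a$. You instead take a literal antiderivative $\mathcal F$ of $F$ and set $S_0(\theta,P)=\mathcal F(\theta)-\mathcal F(\theta-P)$, then interpolate to $\theta P$ by a cutoff in $P$ — a simpler and arguably cleaner way to gain the derivative, since nothing needs to be mollified. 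The more significant difference is the fragmentation step. With your cutoff one has $\partial^2_{\theta P}S\ge 1-C\|F'-1\|_\infty$, which is positive only for $F$ $C^1$-near the identity; you fix this by factoring $F=g_N\circ\cdots\circ g_1$ along the linear isotopy $F_t=(1-t)\,\mathrm{id}+tF$, each $g_j=\mathrm{id}+\tfrac1N h_j$ being $C^k$ and uniformly $C^1$-small, and composing the corresponding extensions. The paper never verifies the hypothesis $\det(\partial_x\partial_y S)\neq 0$ of its own \thref{global}; its remark $\|S-S_0\|_{C^2}\le K\|a\|_{C^0}$ shows its construction also only gives nondegeneracy for $f'$ uniformly near $1$, so the paper is implicitly relying on the same smallness you address head-on, and your fragmentation step closes that gap. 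You also correctly observe (and the paper leaves implicit, addressing it only in the proof of \thref{permute}) that $f$ must be orientation-preserving for any symplectic extension to exist. Two small bookkeeping checks worth recording in a final write-up: that $\mathcal F(\theta)-\mathcal F(\theta-P)-\theta P$ is $2\pi$-periodic in $\theta$, so $S$ descends to a generating function on the cylinder; and that $|P|\ge\delta$ forces $u=\partial_\theta S=P$ so $\psi=\mathrm{id}$ on $\{|u|\ge\delta\}$ (your bound $2\delta$ is a safe overshoot), which is what makes the gluing to the identity across $r=\sqrt{1\pm 6\delta}$ legitimate.
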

\thref{permute} will be proven using Moser's homotopy method while \thref{Circle} will be proven using the method of generating functions. It is remarked that it is not possible to prove a version of \thref{Circle} where we specify the extension to be of the form $(g(r, \theta), f(\theta))$ (see e.g. \cite[Chapter 2]{Z} or \thref{Uniform}). Then we will prove an extension result similar to \thref{permute} but for the square (see notation section below for the definition of a square). Let $S \subset \mathbb{R}^2$ denote the square, the boundary of the set $[0, 1]^2$. Any diffeomorphism of the square (see notation section below for the definition of a diffeomorphism of the square) can be identified as four diffeomorphisms from the unit interval to itself, each representing a side of the square. Since a diffeomorphism of the square extends to a diffeomorphism of all of $\mathbb{R}^2$, it follows that its derivative extends to the vertices of $S$. We thus have the following:
\begin{theorem}\thlabel{Singularity}
Let $k \ge 0$ be an integer. Suppose $\varphi: S \to S$ is a $C^{k + 1}$ diffeomorphism whose derivative is equal to $1$ on each of the vertices of $S$. Then there exists an ambient $C^{k}$ diffeomorphism $\psi: \mathbb{R}^2 \to \mathbb{R}^2$ such that $\varphi = \psi$ on $S$ and $\psi$ is area preserving.
\end{theorem}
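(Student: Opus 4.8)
The plan is to follow the proof of \thref{permute}: first extend $\varphi$ to an ambient $C^{k+1}$ diffeomorphism $H$ of $\mathbb{R}^2$ that equals the identity off a neighborhood of $S$, and then correct $H$ to an area-preserving map by Moser's homotopy method while keeping it pointwise fixed on $S$. The correction step costs one derivative, which is what accounts for the drop from $C^{k+1}$ to $C^k$ in the statement.

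To build $H$, first note that the hypothesis forces $\varphi$ to fix each vertex and to carry each side of $S$ diffeomorphically onto itself, so $\varphi$ is recorded by four maps $f_i \in C^{k+1}([0,1])$ with $f_i(0)=0$, $f_i(1)=1$ and $f_i'(0)=f_i'(1)=1$. The straight-line homotopies $f_{i,t}=(1-t)f_i+t\,\mathrm{id}$ assemble, via the four sides, into an isotopy $\varphi_t\colon S\to S$ from $\mathrm{id}$ to $\varphi$ through diffeomorphisms of $S$ of the same type, and a short computation shows that the time-dependent vector field generating $\varphi_t$ vanishes to second order at each vertex. Because of this, I would extend it to a compactly supported $C^{k+1}$ time-dependent vector field on $\mathbb{R}^2$ that is everywhere tangent to $S$: near the interior of each side using a collar, near each vertex using a product-type model of the form $(x,y)\mapsto(\chi(y)X(x),\chi(x)\widetilde X(y))$ with $\chi$ a cutoff equal to $1$ near $0$, and then splicing these local fields together with a partition of unity (a harmless operation for vector fields, unlike averaging diffeomorphisms directly). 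Taking $H$ to be the time-$1$ flow gives a $C^{k+1}$ diffeomorphism of $\mathbb{R}^2$ with $H|_S=\varphi$ and $H=\mathrm{id}$ outside a neighborhood of $S$.

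For the correction, write $\omega=dx\wedge dy$ and $H^*\omega=\rho\,\omega$, so $\rho>0$ is $C^k$ and $\rho\equiv 1$ off a neighborhood of $S$. Since $H$ is isotopic to the identity and preserves $S$ setwise it maps $[0,1]^2$ onto itself, so $\int_{[0,1]^2}(1-\rho)\,\omega=0$, and likewise $\int_{\mathbb{R}^2\setminus[0,1]^2}(1-\rho)\,\omega=0$. I would pick an annular neighborhood $N$ of $S$, with smooth boundary curves on either side of $S$, containing the set $\{\rho\neq1\}$; the two flux identities then let me produce a $1$-form $\beta$ on $N$ with $d\beta=(1-\rho)\,\omega$ vanishing, as a covector, along $S$ and along $\partial N$, and one can keep $\beta$ of class $C^k$ because primitivizing here only requires integrating the already-$C^k$ defect $(1-\rho)\,\omega$ in the single direction transverse to $S$. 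Setting $\omega_t=(1-t)\,\omega+t\,\rho\,\omega$ and defining $X_t$ by $\iota_{X_t}\omega_t=\beta$, Moser's identity $\mathcal{L}_{X_t}\omega_t=-\dot\omega_t$ holds, $X_t$ is $C^k$, and $X_t$ vanishes on $S$ and near $\partial N$; its flow $g_t$, extended by the identity outside $N$, is a $C^k$ isotopy with $g_0=\mathrm{id}$, $g_t|_S=\mathrm{id}$ and $g_1^*(\rho\,\omega)=\omega$. Then $\psi:=H\circ g_1$ is a $C^k$ diffeomorphism of $\mathbb{R}^2$ with $\psi^*\omega=g_1^*(\rho\,\omega)=\omega$ and $\psi|_S=\varphi$, as required.

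The main obstacle is the extension step — getting a genuine diffeomorphism across the corners of $S$ rather than just a continuous map — and the reason it goes through is exactly the hypothesis $D\varphi=1$ at the vertices, which makes the isotopy's vector field vanish to second order there and therefore extendable by the product model above. A secondary point requiring care is the flux bookkeeping in the Moser step, which must be organized so that only one degree of regularity is lost, just as in \thref{permute}.
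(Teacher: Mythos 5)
Your overall plan — extend $\varphi$ to an ambient diffeomorphism $H$ of the plane and then correct $H$ to be area-preserving while fixing $S$ pointwise — has the same shape as the proof of \thref{permute}, but the paper deliberately does \emph{not} run the Moser homotopy method for the square, precisely because the square has no tubular neighborhood. Instead the paper normalizes the Jacobian determinant on all of $\partial[0,1]^2$ by the separation-of-variables lemma (\thref{Separation}/\thref{update}) and then applies a boundary-preserving, upper-triangular version of Moser's cube lemma (\thref{mose}). You attempt to run the homotopy method directly, and there is a real gap in the construction of the primitive $\beta$.

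The sentence ``one can keep $\beta$ of class $C^k$ because primitivizing here only requires integrating the already-$C^k$ defect $(1-\rho)\,\omega$ in the single direction transverse to $S$'' fails at the corners of $S$, where there is no single transverse direction. In fact there is a pointwise obstruction: if $\beta = P\,dx + Q\,dy$ vanishes as a covector on both edges meeting at a corner, then $P$ and $Q$ vanish identically on each edge, so $P_x=P_y=Q_x=Q_y=0$ at the corner and hence $d\beta=0$ there; so $\beta$ cannot exist unless $\rho=1$ at the corner. The hypothesis $D\varphi=1$ at the vertices does supply $\rho=1$ at the corners (via the ambient extension, since the two tangent directions span $\mathbb{R}^2$), but you invoke the vertex hypothesis only for the extension step, not here. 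Even granting $\rho=1$ at a corner, the two one-sided primitives $-\bigl(\int_0^y(1-\rho)(x,s)\,ds\bigr)dx$ and $\bigl(\int_0^x(1-\rho)(s,y)\,ds\bigr)dy$ do not agree, and a cutoff-based splice of them involves $f\,d\chi$ with $d\chi$ of order $1/r$ near the corner, which is not obviously $C^k$; making it work requires first subtracting the ``edge traces'' $\rho(0,y)+\rho(x,0)-\rho(0,0)$, i.e.\ exactly the separation-of-variables trick. That missing corner analysis is precisely the content of \thref{Separation}/\thref{update} and \thref{mose}, and without it the primitivization step is not justified.

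Two smaller points. By the paper's definition, a $C^{k+1}$ diffeomorphism of $S$ already comes equipped with an ambient $C^{k+1}$ extension, so your careful reconstruction of $H$ (vector field tangent to $S$, product model at corners, partition of unity) is not needed, though it is consistent with the hypothesis and does yield an $H$ equal to the identity off a neighborhood of $S$, which the paper's $\psi$ need not be. Also, the hypothesis $D\varphi=1$ at the vertices does not by itself force $\varphi$ to fix each vertex — $\varphi$ may still permute them as a symmetry of the square — though this is harmless to fix by pre-composing with the corresponding isometry.
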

 Using the arguments for the proof of \thref{Singularity} and \cite{M}, we will prove an ambient Dacarogna-Moser's Theorem. The Dacarogna-Moser Theorem (first found in \cite{DM}) is the following:
\begin{theorem}[Dacarogna-Moser Theorem]
Let $\Omega$ be an open set in $\mathbb{R}^n$ with a $C^{k + 3}$ boundary $f, g \in C^{k, \alpha}(\overline{\Omega}; \mathbb{R}^+)$ with $\alpha > 0$. If we define $\tau = f dx_1 dx_2 \dots dx_n$, $\omega = g dx_1 dx_2 \dots dx_n$, $\lambda = \int \tau /\int \omega$, then there exists a $v \in \rm{Diff}^{k + 1, \alpha}(\overline{\Omega})$ (see notation at the end of the section) such that $\lambda \tau = v^*\omega$ and $v|_{\partial \Omega} = Id$. 
\end{theorem}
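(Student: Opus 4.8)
The plan is to carry out Moser's homotopy method, just as in the earlier parts of the paper, the one new ingredient being a divergence equation solved under a homogeneous boundary condition. First I would normalise: since $\int_\Omega\tau$ and $\int_\Omega\omega$ are positive, replacing $f$ by a positive scalar multiple reduces to the case $\lambda=1$, i.e.\ $\int_\Omega f=\int_\Omega g$; set $h:=f-g\in C^{k,\alpha}(\overline\Omega)$, so $\int_\Omega h=0$. It then suffices to produce $v\in\mathrm{Diff}^{k+1,\alpha}(\overline\Omega)$ with $v^*\omega=\tau$ -- equivalently $g(v(x))\det Dv(x)=f(x)$ -- and $v|_{\partial\Omega}=\mathrm{Id}$; undoing the rescaling recovers the stated theorem.

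The analytic heart is the following linear problem: find $w\in C^{k+1,\alpha}(\overline\Omega;\mathbb{R}^n)$ with $\operatorname{div}w=h$ in $\Omega$, $w=0$ on $\partial\Omega$, and $\|w\|_{C^{k+1,\alpha}}\le C\|h\|_{C^{k,\alpha}}$ -- that is, a bounded right inverse for the divergence that gains a full derivative and is compatible with the vanishing boundary data. Equivalently, setting $\eta:=\iota_w(dx_1\wedge\cdots\wedge dx_n)$, one wants an $(n-1)$-form with $d\eta=h\,dx_1\wedge\cdots\wedge dx_n=\tau-\omega$ and $\eta|_{\partial\Omega}=0$. This is where the $C^{k+3}$ hypothesis on $\partial\Omega$ is consumed: flatten $\partial\Omega$ in finitely many charts, solve the model half-space problem explicitly, and patch by a partition of unity, reducing the resulting mean-value corrections to the classical interior solvability used in \cite{M} -- much as the boundary is handled in the proof of \thref{Singularity} -- the two surplus orders of smoothness of $\partial\Omega$ being exactly what keeps $w$ in $C^{k+1,\alpha}$ after the changes of charts. (In the smooth-boundary case this is the linear step of \cite{DM}.)

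Granting $w$, set $\omega_t:=(1-t)\tau+t\omega=\bigl((1-t)f+tg\bigr)\,dx_1\wedge\cdots\wedge dx_n$ for $t\in[0,1]$. Since $f,g>0$ on the compact set $\overline\Omega$, the density $u_t:=(1-t)f+tg$ is bounded below uniformly in $t$, so each $\omega_t$ is a volume form and $X_t:=w/u_t$ is a time-dependent vector field, smooth in $t$ and $C^{k,\alpha}$ in $x$, vanishing on $\partial\Omega$, with $\iota_{X_t}\omega_t=\eta$. Its flow $\psi_t$ -- $\dot\psi_t=X_t\circ\psi_t$, $\psi_0=\mathrm{Id}$ -- exists on all of $\overline\Omega$ for $t\in[0,1]$: $X_t$ vanishes on $\partial\Omega$, so the boundary is invariant, and uniqueness of ODE solutions confines interior trajectories to $\Omega$; thus $\psi_t$ is a diffeomorphism of $\overline\Omega$ fixing $\partial\Omega$. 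Since $d\omega_t=0$, Cartan's formula gives $\frac{d}{dt}\bigl(\psi_t^*\omega_t\bigr)=\psi_t^*\bigl((\omega-\tau)+d\iota_{X_t}\omega_t\bigr)=\psi_t^*\bigl((\omega-\tau)+d\eta\bigr)=0$, hence $\psi_t^*\omega_t\equiv\psi_0^*\omega_0=\tau$; taking $v:=\psi_1$ gives $v^*\omega=\tau$ with $v|_{\partial\Omega}=\mathrm{Id}$.

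The hardest point is the boundary-value divergence equation of the second paragraph -- essentially surjectivity of $\operatorname{div}$ with a bounded right inverse gaining a Hölder derivative subject to zero boundary data -- together with the bookkeeping needed for the sharp regularity. Indeed the single flow just built yields only $v\in\mathrm{Diff}^{k,\alpha}$, because $X_t=w/u_t$ is merely $C^{k,\alpha}$ when $f,g\in C^{k,\alpha}$. To reach $\mathrm{Diff}^{k+1,\alpha}$ I would instead deduce the general statement from the special case $g\equiv 1$ -- the pure Jacobian equation $\det Du=\phi$, $u|_{\partial\Omega}=\mathrm{Id}$, for positive $\phi\in C^{k,\alpha}(\overline\Omega)$ with $\int_\Omega\phi=|\Omega|$ -- via the composition $v=v_g^{-1}\circ v_f$, where $v_f,v_g$ solve the Jacobian equation for $f/m$ and $g/m$ with $m:=\int_\Omega f/|\Omega|=\int_\Omega g/|\Omega|$; and I would prove the special case by the iteration of \cite{DM}: writing $u=\mathrm{Id}+\xi$ and $\det(I+D\xi)=1+\operatorname{div}\xi+Q(D\xi)$ with $Q$ gathering the higher-order terms, iterate $\operatorname{div}\xi_{n+1}=(\phi-1)-Q(D\xi_n)$ using the linear solver, which gains a derivative at each stage so that the fixed point lies in $C^{k+1,\alpha}$, the smallness needed for convergence being arranged by a continuity argument interpolating $\phi$ from $1$ or by subdividing $\Omega$. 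The remaining verifications -- uniform positivity of $u_t$, invariance of $\overline\Omega$ under the flow, and the Cartan identity -- are routine once the linear solver is in place.
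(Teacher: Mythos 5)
The paper itself does not prove the Dacorogna--Moser Theorem: it quotes the statement as known background, attributing it to \cite{DM}, and then devotes the rest of that section to a related but different result, \thref{SmoothBoundary} --- an ambient variant with no gain of derivative, proved by Moser's flow method plus a cube decomposition in the spirit of \cite{M}. Your proposal is therefore not a re-derivation of anything proved in the paper; it is a reconstruction of the original argument of \cite{DM}, and as such it is correct in outline and hits the essential points. In particular you correctly observe that the flow built from the divergence solver $w$ yields only $\mathrm{Diff}^{k,\alpha}$, since the vector field $X_t = w/u_t$ inherits the $C^{k,\alpha}$ regularity of $f$ and $g$, and that the sharp $\mathrm{Diff}^{k+1,\alpha}$ conclusion requires the elliptic fixed-point route of \cite{DM} --- writing $u = \mathrm{Id}+\xi$, isolating $\operatorname{div}\xi$ inside $\det(I+D\xi)$, and iterating with the bounded right inverse of $\operatorname{div}$ that gains one H\"older derivative --- together with the composition $v = v_g^{-1}\circ v_f$ to reduce to $g\equiv 1$. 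The linear estimate $\|w\|_{C^{k+1,\alpha}} \le C\|h\|_{C^{k,\alpha}}$ with $w|_{\partial\Omega}=0$ is indeed where $C^{k+3}$ boundary regularity is spent, though your chart-flattening sketch of it is schematic, as is the smallness/decomposition step for the fixed point (the route in \cite{DM} first uses the flow method to make the ratio identically $1$ to high order near $\partial\Omega$ before applying the contraction). Compared with the paper's \thref{SmoothBoundary}, your route buys the extra derivative at the cost of Schauder theory for the divergence operator; the paper's cube decomposition trades that derivative for an elementary, fully constructive argument and an ambient conclusion on $\mathbb{R}^n$.
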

Here, we will prove:
\begin{theorem}\thlabel{SmoothBoundary}
Let $U$ be a bounded connected open set in $\mathbb{R}^n$ with connected $C^\infty$ boundary. Suppose $f,g \in C^k(\mathbb{R}^n; \mathbb{R}^+)$. Then there exists a $C^k$ diffeomorphism $\varphi: \mathbb{R}^n \to \mathbb{R}^n$ such that $\det(d\varphi)f \circ \varphi = (\int_U f/\int_U g) g$ in a neighborhood of $\overline{U}$ and $\varphi|_{\partial U} = Id$.
\end{theorem}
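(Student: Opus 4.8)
\emph{Proof proposal.} The plan is to run Moser's homotopy method of \cite{M}, in the spirit of the proof of \thref{Singularity}, on a neighborhood of $\overline{U}$, arranging the Moser vector field to vanish on $\partial U$ so that its flow fixes $\partial U$ pointwise.

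Set $\lambda = \int_U f/\int_U g$ and, for $t \in [0,1]$, let $\mu_t = \big((1-t)\lambda g + t f\big)\,dx_1 \wedge \cdots \wedge dx_n$; since $f, g, \lambda > 0$, each $\mu_t$ is a volume form on $\mathbb{R}^n$, with $\mu_0 = \lambda g\,dx$ and $\mu_1 = f\,dx$, and the identity $\det(d\varphi)\,(f\circ\varphi) = \lambda g$ is precisely $\varphi^*\mu_1 = \mu_0$. Following Moser, it suffices to find, on a neighborhood $W$ of $\overline{U}$, a $C^k$ $(n-1)$-form $\beta$ with $d\beta = \mu_0 - \mu_1 = (\lambda g - f)\,dx$ on $W$ and with $\beta \equiv 0$ at every point of $\partial U$. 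Indeed, letting $X_t$ be the time-dependent vector field (of class $C^k$ in $(t,x)$) determined by $\iota_{X_t}\mu_t = \beta$, one has $\mathcal{L}_{X_t}\mu_t = d\iota_{X_t}\mu_t = \mu_0 - \mu_1 = -\tfrac{d}{dt}\mu_t$, so the flow $\varphi_t$ of $X_t$ satisfies $\tfrac{d}{dt}(\varphi_t^*\mu_t) = 0$ wherever the orbit stays in $W$; shrinking $W$ so that $\varphi_t(W) \subset W$ for $t \in [0,1]$, we get $\varphi_1^*\mu_1 = \mu_0$ on $W$. Since $\mu_t$ is nondegenerate, $X_t$ vanishes exactly where $\beta$ does, hence $X_t \equiv 0$ on $\partial U$ and $\varphi_1|_{\partial U} = \mathrm{Id}$. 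Finally, multiplying $X_t$ by a compactly supported cutoff that is $1$ on $W$ — which alters neither $\iota_{X_t}\mu_t$ nor $d\beta$ on $W$ — makes $\varphi := \varphi_1$ a $C^k$ diffeomorphism of all of $\mathbb{R}^n$, equal to the identity outside a compact set; this $\varphi$ is then the required map.

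Everything thus reduces to the construction of $\beta$. The compatibility condition is automatic: $\int_U(\lambda g - f)\,dx = \lambda\int_U g - \int_U f = 0$ by the choice of $\lambda$, and by Stokes this equals $\int_{\partial U}\beta|_{\partial U}$, consistent with $\beta$ vanishing on $\partial U$. To produce $\beta$, first apply the Poincaré homotopy operator on a large ball $B \supset \overline{U}$ to $(\lambda g - f)\,dx$; being an explicit fibre integration it loses no regularity and gives a $C^k$ form $\beta_0$ with $d\beta_0 = (\lambda g - f)\,dx$ on $B$. Then, in a smooth collar $\partial U \times (-\varepsilon, \varepsilon) \subset B$ of $\partial U$ — available because $\partial U$ is $C^\infty$, and handled just as in the collar argument of \thref{Singularity} — split $\beta_0$ into its part along $\partial U$ and its transverse part and correct it, by a relative (boundary-vanishing) Poincaré lemma, by a $C^k$ form supported in the collar whose exterior derivative vanishes near $\partial U$ and which agrees with $\beta_0$ along $\partial U \times \{0\}$; the vanishing of $\int_U(\lambda g - f)\,dx$ is exactly the obstruction that has to, and does, vanish for this correction to exist. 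Cutting the result off to be supported in $B$ yields the desired $\beta$.

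The main obstacle is precisely this construction of $\beta$. One must kill $\beta_0$ on $\partial U$ as a \emph{full} $(n-1)$-form — annihilating only its pullback to $\partial U$ would not force the Moser field $X_t$ to vanish there — while preserving both the equation $d\beta = (\lambda g - f)\,dx$ near $\partial U$ and the $C^k$ regularity; this is where the collar analysis of \thref{Singularity}, the smoothness of $\partial U$, and the connectedness hypotheses (which make the single number $\int_U(\lambda g - f)\,dx$ the only obstruction) all enter. The remaining ingredients — the Poincaré lemma, the Moser flow, and the cutoff extension to $\mathbb{R}^n$ — are routine.
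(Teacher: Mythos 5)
The route you sketch is genuinely different from the paper's, and it has a real gap at the step you yourself flag as the obstacle. Your plan is the ``divergence--equation'' version of Moser's method: find an $(n-1)$-form $\beta$ of class $C^k$ with $d\beta = (\lambda g - f)\,dx$ near $\overline U$ and with $\beta \equiv 0$ on $\partial U$ as a \emph{full} $(n-1)$-form (equivalently, a $C^k$ vector field $X$ with $\operatorname{div}X = \lambda g - f$ and $X|_{\partial U} = 0$), then flow. The paper explicitly mentions this strategy in the discussion preceding the square section and then declines to follow it; instead it modifies $f,g$ outside $\overline U$, builds a cover adapted to $\partial U$ (\thref{Dumb}), decomposes $h-\lambda$ into pieces supported in the cover sets via a version of Moser's Lemma~1 (\thref{Lemma1}), and solves on each piece --- which is diffeomorphic to a doubled cube $[-1,1]\times[0,1]^{n-1}$ --- by the explicit separation-of-variables constructions \thref{Grid}, \thref{Double}, \thref{mose2}, \thref{doublesquare}. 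All of those are integrations in one variable at a time and manifestly preserve $C^k$. (Incidentally, the ``collar argument of \thref{Singularity}'' you invoke does not exist: \thref{Singularity} is also proved by the decomposition route, via \thref{update} and \thref{mose}, not by a Moser vector field in a collar.)

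The gap is the ``relative boundary-vanishing Poincar\'e lemma''. Killing the \emph{pullback} $\iota^*\beta_0$ on $\partial U$ is fine and costs one cohomological condition, which $\int_U(\lambda g - f)\,dx = 0$ supplies. But you also need to kill the transverse components of $\beta_0$ along $\partial U$ while keeping $d\beta = (\lambda g - f)\,dx$ near $\partial U$, i.e.\ you must subtract a form $\delta$, closed near $\partial U$, with $\delta|_{\partial U} = \beta_0|_{\partial U}$ as a full form. In collar coordinates $(y,s)$ write $\delta = \delta^T(y,s) + ds\wedge\delta^N(y,s)$; closedness forces $\partial_s\delta^T = d_y\delta^N$, so $\delta^T(y,s) = \delta^T(y,0) + \bigl(\int_0^s\chi\bigr)\,d_y\delta^N(y,0)$ in the obvious ansatz. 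The term $d_y\delta^N(y,0)$ is a derivative of $C^k$ data and is therefore only $C^{k-1}$, so $\delta$, hence $\beta$, hence $X_t$, hence $\varphi$, come out $C^{k-1}$, not $C^k$. This is not a presentational slip: solving $\operatorname{div}X = \rho$ with full Dirichlet data $X|_{\partial U}=0$ without losing a derivative in $C^k$ is precisely the hard analytic point in Dacorogna--Moser (who pay for it with H\"older spaces and elliptic theory, gaining a derivative), and the paper cites \cite{BK}, \cite{Mc} as evidence that the $C^0$ endpoint genuinely fails. The decomposition proof in the paper is designed to sidestep exactly this: each $v_i$ is of the triangular form $v_i = (x_1,\dots,u_i,\dots,x_n)$ with $u_i = \int_0^{x_i}\beta(\cdots)\,dt$, and the regularity of $u_i$ matches that of the integrand with no loss. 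So your proposal would need either a sharper construction of $\beta$ than the one you gesture at, or an honest derivative drop in the conclusion.
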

\begin{remark}
A shorter albeit non-elementary proof of the above theorem for the case of $U$ simply connected can be found in \cite[Theorem 9.6]{CDK}.
\end{remark}
The arguments used to prove \thref{Singularity} and \thref{SmoothBoundary} are based on the arguments of \cite{M} and a separation of variables argument found in \cite[Proposition 10]{DM}. Note that \cite{BK} and \cite{Mc} showed that it is not possible to gain a degree of regularity for \thref{SmoothBoundary} if one does not assume additional H\"older regularity on $f$ and $g$. If one could modify the construction in \thref{mose2} so that it holds for a domain such as $B = [-1, 1]^2$ with boundary conditions on $\partial B \cup [-1, 1] \times \{0\} \cup \{0\} \times [-1, 1]$, then one could generalize \thref{Singularity} for all polygons. \\\\
We will use the following as notation throughout the paper:
\begin{itemize}
    \item If we have specified a metric space to work in, $B_r(x)$ is the open ball of radius $r$ around $x$.
    \item The coordinates of $\mathbb{R}^{2n}$ are $\mathbb{R}^{2n} = (x_1, \dots, x_n, y_1, \dots, y_n)$, and if left unspecified, $\mathbb{R}^{2n}$ has the symplectic form $\omega_0 = \sum_{i = 1}^n dx_i \wedge dy_i$.
    \item If $(M, \omega)$ is specified to be a symplectic manifold, then $M$ is a smooth manifold and $\omega$ is a non-degenerate closed two form on $M$.
    \item If $X_t$ is a vector field, then the flow $\varphi_t$ of the vector field of $X$ is the function satisfying
    $$\frac{d}{dt}\varphi_t = X_t \circ \varphi_t$$
    $$\varphi_0 = \text{Id}.$$
    \item The symplectic form on the cylinder $\mathbb{R} \times S^1 = (s, \theta)$ or $\mathbb{R}^+ \times S^1$ is $ds \wedge d\theta$.
    \item For a function $F: \mathbb{R}^n \to \mathbb{R}$, we denote $F(x_1, x_2, \dots, x_{i - 1}, r, x_{i + 1}, \dots, x_n)$ as $F(x_i = r)$.
    \item An orientation preserving diffeomorphism $f: S^1 \to S^1$ will be identified with its lift $F: \mathbb{R} \to \mathbb{R}$ such that $F(x + 1) = F(x) + 1$.
    \item The coordinates of a cylinder $S^1 \times \mathbb{R}$ will be denoted $(\theta, s)$.
    \item If $f: \mathbb{R}^n \times \mathbb{R}^n \to \mathbb{R}$ and $(x, y) \in \mathbb{R}^n \times \mathbb{R}^n$, then $f_{x_i}$, $f_{y_i}$ will be used to denote $\frac{\partial f}{\partial x_i}$ and $\frac{\partial f}{\partial y_i}$, respectively.
    \item $\text{Diff}^k(S)$ will denote $C^k$ diffeomorphisms of $S$. If $S$ is closed, then this will mean that all $k$ derivatives exist and have continuous extension to the boundary.
    \item $C^k_c(U)$ will denote $C^k$ compactly supported functions on $U$.
    \item $\mathbb{R}^+ = (0, \infty)$.
    \item The square is $\partial [0, 1]^2$ and will often be denoted with an $S$.
    \item The set $\text{Diff}^k(S)$ consists the set of all $\varphi$ such that there exists $\psi \in \text{Diff}^k(\mathbb{R}^2)$ with $\psi|_{S} = \varphi$.
\end{itemize}

\subsection{Acknowledgements:} This paper is my undergraduate thesis for UC Berkeley. I am grateful to Charles Pugh and Maciej Zworski for advising this project and for many helpful comments. I am also grateful to Cesar Silva for Charles Pugh and Cesar Silva for introducing me to the field of dynamical systems. I would like to thank Wilfrid Gangbo for his prompt response and helpful comments on this paper. In addition, I would like to thank Aidan Backus, Mason Haberle, and Reuben Drogin for helpful discussions regarding both this paper and many other mathematical topics throughout my time at UC Berkeley. Finally, I would like to thank Conan Wu for maintaining her blog. In particular, \cite{Wu} was one of the primary motivations for this project.

\section{Background}\label{Introduction}
\subsection{Definitions and Examples}
Let $M$ be a manifold. A \textit{symplectic form} on $M$ is a non-degenerate two form $\omega$ such that $d\omega = 0$. 
\begin{example}
The manifold $\mathbb{R}^2$ with the form $\omega = dx \wedge dy$ is a symplectic manifold. More generally, $\mathbb{R}^{2n}$ with the form $dx \wedge dy = \sum_{i = 1}^n dx_i \wedge dy_i$
\end{example}
\begin{example}
Similarly, the Torus $\mathbb{T}^{2n}$ is a symplectic manifold with symplectic form $dx \wedge dy$ and the cylinder $S^1 \times \mathbb{R}$ is a symplectic manifold with the form $dr \wedge d\theta$ where $r$ is the $\mathbb{R}$ variable and $\theta$ is the $S^1$ variable. 
\end{example}
A diffeomorphism $f: (M_1, \omega_1) \to (M_2, \omega_2)$ between two symplectic manifolds $(M_1, \omega_1)$ and $(M_2, \omega_2)$ is known as a \textit{symplectomorphism} if $f^*\omega_2 = \omega_1$.
\begin{example}
Consider the sphere $S^2$ with the symplectic form $\sin(\varphi) d\theta \wedge d\varphi = \omega_1$ where the sphere has spherical coordinates $(\varphi, \theta)$ whose cartesian coordinates are 
$$(\sin(\varphi)\sin(\theta), \cos(\varphi)\sin(\theta), \cos(\theta)),$$
and the cylinder $S^1 \times (-1, 1)$ with the symplectic form $dr \wedge d\theta = \omega_2$. Then the Archimedes projection defined by $(r, \theta) \mapsto (\cos^{-1}(r), \theta)$ is a symplectomorphism from the cylinder $S^1 \times (-1, 1)$ to $S^2 \setminus \{(0, 0, 1), (0, 0, -1)\}$.
\end{example}
Let $(M, \omega)$ be a symplectic manifold and $f \in C^\infty(M)$ a smooth function. The \textit{Hamiltonian flow} for $f$, denoted $H_f$ is the unique vector field such that $df(X) = \omega(X, H_f)$. In local coordinates (i.e. when our manifold is $\mathbb{R}^{2n}$ and our symplectic form is $\sum_{i = 1}^n dx_i \wedge dy_i$), we can compute that
$$H_f = \sum_{i = 1}^{n} \frac{\partial f}{\partial y_i}\frac{\partial }{\partial x_i} - \frac{\partial f}{\partial x_i}\frac{\partial }{\partial y_i}.$$
\begin{theorem}
Let $\varphi_t$ be defined as the flow of the vector field $H_f$. Then $\varphi_t$ is a local symplectomorphism of $M$.
\end{theorem}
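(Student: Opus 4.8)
The plan is to show that $\varphi_t^*\omega$ is independent of $t$ by differentiating in $t$ and invoking Cartan's formula, and then to note that this $t$-derivative vanishes precisely because $\omega$ is closed and $H_f$ is, by construction, ``$\omega$-dual'' to the exact form $df$. The flow $\varphi_t$ is defined (and smooth) for small $|t|$ on a neighborhood of any given point by the standard existence theory for ODEs, which is why the conclusion is only local.

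First I would recall the Lie-derivative identity for flows: for a (time-independent) vector field $X$ with flow $\varphi_t$ and any differential form $\alpha$,
$$\frac{d}{dt}\varphi_t^*\alpha = \varphi_t^*(\mathcal{L}_X \alpha).$$
Applying this with $X = H_f$ and $\alpha = \omega$ reduces the problem to showing $\mathcal{L}_{H_f}\omega = 0$. Next I would apply Cartan's magic formula $\mathcal{L}_{H_f}\omega = d(\iota_{H_f}\omega) + \iota_{H_f}(d\omega)$. Since $\omega$ is symplectic, $d\omega = 0$, so the second term drops and $\mathcal{L}_{H_f}\omega = d(\iota_{H_f}\omega)$.

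The one point that requires care is identifying $\iota_{H_f}\omega$. From the defining relation $df(X) = \omega(X, H_f)$ for all $X$, and the antisymmetry of $\omega$, we get $(\iota_{H_f}\omega)(X) = \omega(H_f, X) = -\omega(X, H_f) = -df(X)$, i.e. $\iota_{H_f}\omega = -df$. Therefore $\mathcal{L}_{H_f}\omega = d(-df) = 0$ since $d^2 = 0$. (In the local coordinate model on $\mathbb{R}^{2n}$ with $\omega_0 = \sum_i dx_i\wedge dy_i$ and the stated formula for $H_f$, one can alternatively verify $\iota_{H_f}\omega_0 = -df$ by a direct contraction, which is a useful sanity check of the sign convention.)

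Combining these steps: $\frac{d}{dt}\varphi_t^*\omega = \varphi_t^*(\mathcal{L}_{H_f}\omega) = 0$, so $t \mapsto \varphi_t^*\omega$ is constant on its interval of definition, and since $\varphi_0 = \mathrm{Id}$ we get $\varphi_t^*\omega = \omega$ wherever $\varphi_t$ is defined. Hence each $\varphi_t$ is a local symplectomorphism, as claimed. I do not expect a genuine obstacle here; the only thing to be vigilant about is the sign convention in the definition of $H_f$, which affects whether $\iota_{H_f}\omega$ equals $df$ or $-df$ but is immaterial to the vanishing of its exterior derivative.
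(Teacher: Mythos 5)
Your proof is correct and takes essentially the same approach as the paper: differentiate $\varphi_t^*\omega$ in $t$, apply Cartan's magic formula, use $d\omega=0$, and note that $\iota_{H_f}\omega$ is exact so its exterior derivative vanishes. You are in fact slightly more careful than the paper about the sign of $\iota_{H_f}\omega$ (the paper writes $d(df)$ where the convention $df(X)=\omega(X,H_f)$ actually gives $\iota_{H_f}\omega=-df$), though as you observe this is immaterial since $d(\pm df)=0$ either way.
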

\begin{proof}
We wish to prove that $\varphi_t^*\omega = \omega$. This follows from the Cartan magic formula:
$$\varphi_t^*\omega = \omega \iff \frac{d}{dt} \varphi_t^*\omega = 0$$
$$\frac{d}{dt}\varphi_t\omega = \mathcal{L}_{H_f}\omega = i_{H_f}d\omega + d(i_{H_f}\omega) = d(df) = 0.$$
\end{proof}
The final thing we will consider in this section before moving on to Moser's Method is the notion of a \textit{generating function}. A $C^2$ function $S: M \to \mathbb{R}$ is known as a generating function for the (local) symplectomorphism $(x, S_x) \mapsto (y, -S_y)$. In order for this to be a local symplectomorphism, we must have the requirement that $\frac{\partial S}{\partial x \partial y}$ is nondegenerate. We compute that the map defined is indeed a symplectomorphism:
$$dy \wedge d(-S_y) = S_{xy} dx \wedge dy$$
$$dx \wedge d(S_x) = S_{xy} dx \wedge dy.$$
There is actually another convention one can use for generating functions: namely, $(x, S_x) \mapsto (S_y, y)$. Indeed, since we swapped the two terms in the range of the map, we have to multiply one of the factors by $-1$ to cancel out the effect of the swap reflecting the fact that $\omega \wedge \nu = -\nu \wedge \omega$ for any forms $\omega$ and $\nu$. 

\subsection{Darboux's Theorem}

One fundamental theorem in symplectic geometry is known as Darboux's Theorem. The standard formulation of Darboux's theorem is as follows:
\begin{theorem}\thlabel{Darboux}
Let $M$ be a $2n$ dimensional smooth manifold, $N$ a compact submanifold. Suppose $\omega_1, \omega_2$ are closed two-forms that are equal and nondegenerate on $N$. Then there exists neighborhoods $U_0$ and $U_1$ of $N$ and a symplectomorphism $f: (U_0, \omega_1) \to (U_1, \omega_2)$ such that $f|_{N} = \text{id}$.
\end{theorem}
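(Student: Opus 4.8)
The plan is to follow Moser's homotopy method exactly in the form advertised in the introduction: instead of building $f$ directly, I build an isotopy $f_t$ interpolating between the identity and $f$, and I obtain $f_t$ as the flow of a time-dependent vector field.

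First I would interpolate between the two forms by setting $\omega_t = \omega_1 + t(\omega_2 - \omega_1)$ for $t \in [0,1]$. Along $N$ we have $\omega_t = \omega_1 = \omega_2$, which is nondegenerate there; since nondegeneracy is an open condition on the (finite-dimensional) space of two-forms at a point and $N$ is compact, there is an open neighborhood $U$ of $N$ on which $\omega_t$ is nondegenerate for every $t \in [0,1]$ simultaneously.

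The key technical step, and the one I expect to be the main obstacle, is a \emph{relative Poincaré lemma}: the two-form $\beta := \omega_2 - \omega_1$ is closed and vanishes identically on $TM|_N$, and I claim there is a one-form $\sigma$ defined near $N$ with $d\sigma = \beta$ and $\sigma_p = 0$ for every $p \in N$. To prove this I would fix a tubular neighborhood with projection $\pi\colon U \to N$ and inclusion $j\colon N \hookrightarrow M$, let $\phi_s$ ($s \in (0,1]$) be the fiberwise scaling homotopy with $\phi_1 = \mathrm{id}$ and $\phi_s \to j\circ\pi$ as $s \to 0$, let $Y_s$ be its generating time-dependent vector field (which vanishes along $N$, since $N$ is the common fixed-point set of the scalings), and set $\sigma = \int_0^1 \phi_s^*(i_{Y_s}\beta)\, ds$. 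Cartan's magic formula together with $d\beta = 0$ gives $\frac{d}{ds}\phi_s^*\beta = d\,\phi_s^*(i_{Y_s}\beta)$, so integrating yields $d\sigma = \phi_1^*\beta - (j\circ\pi)^*\beta = \beta - \pi^*(j^*\beta) = \beta$ because $j^*\beta = 0$; and $\sigma_p = 0$ on $N$ because $i_{Y_s}\beta$ vanishes there. Shrinking $U$ if necessary, we may assume $\sigma$ is defined and $\omega_t$ nondegenerate on all of $U$.

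Now I would run Moser's trick on $U$. Seeking an isotopy $f_t$ with $f_t^*\omega_t = \omega_1$ and $f_0 = \mathrm{id}$, I differentiate in $t$: using $\mathcal{L}_{X_t} = d\,i_{X_t} + i_{X_t}d$ and $d\omega_t = 0$, the identity $\frac{d}{dt}(f_t^*\omega_t) = f_t^*\big(d(i_{X_t}\omega_t) + \beta\big) = 0$ reduces to the requirement $d(i_{X_t}\omega_t) = -d\sigma$, for which it suffices to solve $i_{X_t}\omega_t = -\sigma$. Since $\omega_t$ is nondegenerate on $U$, this has a unique smooth solution $X_t$, a time-dependent vector field on $U$; and since $\sigma$ vanishes on $N$, so does $X_t$. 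Because $X_t$ vanishes on the compact set $N$, a standard ODE/Grönwall argument shows its flow $f_t$ is defined for all $t\in[0,1]$ on a possibly smaller neighborhood $U_0$ of $N$, fixes $N$ pointwise, and satisfies $f_t^*\omega_t = \omega_1$ by construction. Setting $f := f_1$ and $U_1 := f_1(U_0)$ then gives $f^*\omega_2 = \omega_1$ and $f|_N = \mathrm{id}$, which is what we want. Apart from the relative Poincaré lemma, the only delicate point is the uniformity of the existence time of the flow near $N$, and that follows from the compactness of $N$ together with the vanishing and continuity of $X_t$ there.
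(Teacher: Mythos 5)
Your proposal is correct and follows essentially the same route as the paper: linearly interpolate the two forms, establish nondegeneracy of $\omega_t$ near $N$ by openness plus compactness, produce a primitive $\sigma$ of $\omega_2-\omega_1$ vanishing along $N$ via the fiberwise scaling homotopy operator on a tubular neighborhood (the paper uses $\exp(\cdot, t\cdot)$ on the normal bundle, which is the same thing), solve $i_{X_t}\omega_t = -\sigma$, and argue the flow exists up to time $1$ near $N$ because $X_t$ vanishes on $N$. The only cosmetic difference is that you invoke Grönwall for the flow's existence time while the paper appeals directly to the quantitative form of the Picard existence theorem.
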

The proof the theorem follows \textit{Moser's Method} which was first found in \cite{M}. The below presentation follows that in \cite{SM}.
\begin{proposition}\thlabel{MoserHomotopy}
Let $M$ be an $n$-dimensional smooth manifold, $t \in [0, 1]$ such that $\omega_t$ a family of non-degenerate two forms with an exact derivative:
$$\frac{d}{dt}\omega_t = d\sigma_t.$$
Then there exists for small $t$ a family of diffeomorphisms $\psi_t: M \to M$ such that $\psi_t^* \omega_t = \omega_0$.
\end{proposition}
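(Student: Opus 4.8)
The plan is to use Moser's trick: instead of producing the diffeomorphisms $\psi_t$ directly, I would look for the time-dependent vector field $X_t$ that generates them, and recover $\psi_t$ as its flow, so that $\frac{d}{dt}\psi_t = X_t \circ \psi_t$ and $\psi_0 = \mathrm{id}$. Differentiating the desired identity $\psi_t^*\omega_t = \omega_0$ in $t$ and using the standard formula $\frac{d}{dt}\big(\psi_t^*\omega_t\big) = \psi_t^*\big(\mathcal{L}_{X_t}\omega_t + \frac{d}{dt}\omega_t\big)$, it suffices to arrange that $\mathcal{L}_{X_t}\omega_t + \frac{d}{dt}\omega_t = 0$ for each $t$, since then $\psi_t^*\omega_t$ is constant in $t$ and equals its value $\omega_0$ at $t = 0$.

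Next I would expand the Lie derivative by the Cartan magic formula, $\mathcal{L}_{X_t}\omega_t = i_{X_t} d\omega_t + d\big(i_{X_t}\omega_t\big)$. Each $\omega_t$ is closed: from the hypothesis, $\frac{d}{dt} d\omega_t = d\, d\sigma_t = 0$, so $d\omega_t$ is independent of $t$ and equals $d\omega_0 = 0$; hence the term $i_{X_t}d\omega_t$ drops out. Using $\frac{d}{dt}\omega_t = d\sigma_t$, the equation becomes $d\big(i_{X_t}\omega_t + \sigma_t\big) = 0$, and it is enough to solve the stronger pointwise equation $i_{X_t}\omega_t = -\sigma_t$. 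Because $\omega_t$ is non-degenerate, the bundle map $v \mapsto i_v\omega_t$ from $TM$ to $T^*M$ is a fiberwise isomorphism, so this determines a unique vector field $X_t$, depending smoothly on $(t,x)$ since $\omega_t$ and $\sigma_t$ do.

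Finally, I would let $\psi_t$ be the flow of $X_t$; by construction $\frac{d}{dt}\big(\psi_t^*\omega_t\big) = 0$ wherever $\psi_t$ is defined, and $\psi_0^*\omega_0 = \omega_0$, so integrating in $t$ yields $\psi_t^*\omega_t = \omega_0$. The one genuinely delicate point — and the reason for the qualifier "for small $t$" — is the existence of the flow: on a non-compact $M$ the field $X_t$ need not be complete, so a priori $\psi_t$ is defined only on an open neighbourhood of $\{0\}\times M$ in $[0,1]\times M$. If $M$ is compact, ODE theory gives that $\psi_t$ is a global diffeomorphism for all $t$ near $0$; in general one restricts to a relatively compact open set, which is all that is needed when the proposition is applied in the proof of \thref{Darboux}. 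This flow-existence issue is the main thing requiring care; everything else is the formal computation sketched above.
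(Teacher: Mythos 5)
Your argument is the same Moser-trick proof the paper gives: differentiate $\psi_t^*\omega_t$, apply Cartan's magic formula, reduce to $i_{X_t}\omega_t = -\sigma_t$, solve by non-degeneracy, and take $\psi_t$ to be the flow. The only difference is that you spell out the closedness of $\omega_t$ and the completeness/small-time issue a bit more explicitly than the paper does.
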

\begin{proof}
Instead of trying to find the diffeomorphisms $\psi_t$, we find the vector field that this diffeomorphism generates:
$$\frac{d}{dt} \psi_t = v_t \circ \psi_t, \psi_0 = \text{Id}.$$
We can then use Picard's theorem to solve for $\psi_t$ for small time and this automatically ensures that $\psi_t$ are diffeomorphisms of $M$ since $\psi_t$ is just the time $t$ flow along the integral curves of the vector field. The integral curves foliate the manifold ensuring that $\psi_t$ are indeed bijective. Since
$\psi_0 = \text{Id}$, the condition
$$\frac{d}{dt} \psi_t^*\omega_t = \omega_0 \iff \frac{d}{dt} \psi_t^* \omega_t = 0.$$
By Cartan's magic formula and the chain rule, it follows that we have the following relations:
\begin{align*}
\rho_t^* \omega_t = \omega_0 &\iff \frac{d}{dt}\rho_t^* \omega_t = 0 \\
&\iff \rho_t^*\left(\mathcal{L}_{v_t}\omega_t + \frac{d}{dt}\omega_t\right) = 0 \\
&\iff \mathcal{L}_{v_t}\omega_t + d\sigma_t = 0 \\
&\iff d i_{v_t} \omega_t + d\sigma_t = 0 \\
&\iff d(i_{v_t} \omega_t + \sigma_t) = 0
\end{align*}
Thus, it suffices to find $v_t$ such that $i_{v_t} \omega_t = -\sigma_t$. Since $\omega_t$ is nondegenerate, the family of vector fields is unique. 
%To see this, we work in a local setting. Suppose locally our two form $\omega_t$ is
%$$\sum_{1 \le i, j \le n} f^{ij}_t(x) dx_i \wedge dx_j$$
%and $d\sigma_t$ our one-form
%$$\sum_{1 \le i \le n} g^i_t(x) dx_i.$$
%Then if our vector field is
%$$v_t = \sum_{1 \le i \le n} v^i_t \frac{\partial}{\partial x_i}$$
%one then has
%$$i_{v_t} \omega_t = \sum_{1 \le k \le n}
\end{proof}
\begin{proof}[Proof of \thref{Darboux}]
To prove \thref{Darboux}, we apply the argument of \thref{MoserHomotopy} to $\omega_t = t\omega_1 + (1 - t)\omega_0$. However, there are several points we must establish before we apply this argument:
\begin{itemize}
    \item[(1)] The derivative $\frac{d}{dt} \omega_t = \omega_1 - \omega_0$ is exact in a neighborhood of $N$.
    \item[(2)] The vector field $v_t$ gives a unique flow for all time $t \in [0, 1]$ for a neighborhood of $N$ given that $v_t = 0$ on $N$ and $v_t$ has small derivative near $N$.
    \item[(3)] $\omega_t$ is nondegenerate for all $t \in [0, 1]$ on a neighborhood of $N$.
\end{itemize}
All 3 statements are true when we consider the entire thing locally: namely that they should hold if we are working in open neighborhoods in $\mathbb{R}^{2n}$ and where $N$ is the $x$-axis. (3) becomes obvious under this context. (2) follows careful analysis of the proof of Picard's theorem, namely the condition that if our ODE $x' = f(x), x(t_0) = x_0$ is defined on $[t_0 - a, t_0 + a] \times B_b(\ell_0)$, and if $a < C\min(b/(||f||_{C^0(B_a(t_0) \times B_b(x_0))}), 1/\text{Lip}(f))$ for some constant $C$, then $x' = f(x)$ has a unique solution in time $[t_0 - a, t_0 + a]$. In our case, $t_0 = 0$, $\ell_0$ a portion of the $x$-axis. Letting $a = 1$, we can still shrink $b$ so that $v_t$ is sufficiently small and has small derivative so that the solution exists for all time in $[0, 1]$. It thus remains to show (1). Consider the exponential map
$$\text{exp}: T^\perp N \to M$$
which maps the normal bundle of $N$ to a tubular neighborhood of $N$ in $M$. Let $\varphi_t = \text{exp}(\cdot, t\cdot)$. Define $X_t$ to be the vector field generated by $\varphi_t$ and $H: \Omega^2(M) \to \Omega^1(M)$ via
$$H(\omega) = \int_0^1 \varphi_t^*(i_{X_t}\omega dt).$$
Now since
$$\frac{d}{dt} \varphi_t^*\omega = \rho_t^*\mathcal{L}_{X_t}\omega = \varphi_t^*(i_{X_t}d\omega + d(i_t\omega_t)) = d\varphi_t^*i_t\omega$$
it follows that
$$dH(\omega) = \int_0^1 \frac{d}{dt} \varphi_t^*\omega dt = \varphi_1^*\omega - \varphi_0^*\omega = \omega.$$
Hence, letting $\omega = \omega_1 - \omega_0$, and $H(\omega) = \sigma_t$, we can apply Moser's argument restricted to a neighborhood of $N$ to conclude our argument.
\end{proof}

\section{The Strong Darboux Theorem}
The usual Strong Darboux Theorem, which can be found, for instance, in \cite[Theorem 21.1.6]{Hor}, states the following:
\begin{theorem}\thlabel{StrongDarboux}
Let $M$ be a $2n$ dimensional symplectic manifold, $A$, $B$ two subsets of $\{1, \dots, n\}$. Let $x_0$ be a point in $M$, $q_i$, $i \in A$, $p_j$, $j \in B$ be $C^\infty$ functions on $M$ with linearly independent differentials satisfying $\{p_i, q_j\} = \delta_{ij}$, $\{p_i, p_j\} = 0$, $\{q_i, q_j\} = 0$. Then there exists local coordinates $(x_i, y_i)$ near $x$ such that $x_i = q_i$, $y_j = p_j$.
\end{theorem}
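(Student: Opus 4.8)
The plan is to argue by induction on $n$, converting the bracket relations among the $q_i$ and $p_j$ into statements about their Hamiltonian vector fields. Subtracting constants, we may assume $q_i(x_0)=0$ for $i\in A$ and $p_j(x_0)=0$ for $j\in B$. Since $f\mapsto H_f$ is a Lie-algebra anti-homomorphism and the Hamiltonian field of a constant vanishes, the hypotheses $\{q_i,q_j\}=\{p_i,p_j\}=0$ and $\{p_i,q_j\}=\delta_{ij}$ say precisely that the fields $\{H_{q_i}\}_{i\in A}\cup\{H_{p_j}\}_{j\in B}$ pairwise commute; they are moreover linearly independent at $x_0$, since $\omega$ is nondegenerate and the differentials $dq_i$, $dp_j$ are independent. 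When $A=B=\varnothing$ the assertion is the classical Darboux theorem --- the case $N=\{x_0\}$ of \thref{Darboux}, applied after a linear change of chart that makes $\omega$ standard at $x_0$ --- and this, together with the trivial case $n=0$, is the base of the induction; the case $|A|=1$, $|B|=0$ amounts, under the present normalization, to \thref{OneVariableDarboux}.

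For the inductive step ($n\ge 1$, with $A,B$ not both empty) I would use two moves. \emph{Move 1: completing a conjugate pair.} Suppose $A\cap B=\varnothing$ and, say, $1\in A$ (the case $A=\varnothing\ne B$ being symmetric). Then $q_1$ Poisson-commutes with every given function, and we manufacture a conjugate $p_1$ as follows: the commuting independent fields $H_{q_1}$, $\{H_{q_i}\}_{i\in A\setminus\{1\}}$, $\{H_{p_j}\}_{j\in B}$ can be simultaneously straightened to distinct coordinate fields $\partial_{z_1}$, $\partial_{z_{\sigma(i)}}$, $\partial_{z_{\rho(j)}}$ in a chart near $x_0$, and then $p_1:=-z_1$ satisfies $\{p_1,q_1\}=1$, $\{p_1,q_i\}=0$ for $i\in A\setminus\{1\}$, and $\{p_1,p_j\}=0$ for $j\in B$, while pairing against $H_{q_1}$ --- against which $dp_1$ pairs nontrivially but every $dq_i$ and $dp_j$ pairs to zero --- shows that $dp_1$ is independent of the remaining differentials. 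This replaces $B$ by $B\cup\{1\}$, so that now $1\in A\cap B$. \emph{Move 2: symplectic reduction.} If $1\in A\cap B$, put $Y=\{q_1=p_1=0\}$ near $x_0$. Because $\{q_1,p_1\}\ne 0$, the plane spanned by $H_{q_1}$ and $H_{p_1}$ is symplectic with $\omega$-orthogonal complement exactly $TY$, so $(Y,\omega|_Y)$ is a symplectic manifold of dimension $2(n-1)$; and since each remaining function Poisson-commutes with both $q_1$ and $p_1$, its Hamiltonian field is tangent to $Y$, whence the restrictions $q_i|_Y$ ($i\in A\setminus\{1\}$) and $p_j|_Y$ ($j\in B\setminus\{1\}$) have independent differentials on $Y$ and satisfy the same bracket relations there. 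The inductive hypothesis applied to $Y$ produces a symplectic chart $g\colon V\to Y$, $V\subseteq\RR^{2(n-1)}$, realizing these restrictions as the appropriate coordinate functions. Finally I would glue back via $\Psi(s,t,v):=\phi^{p_1}_s(\phi^{q_1}_{-t}(g(v)))$, where $\phi^{q_1}_\tau$ and $\phi^{p_1}_\tau$ are the Hamiltonian flows of $q_1$ and $p_1$; these commute and preserve $\omega$, and by construction $q_1\circ\Psi=s$, $p_1\circ\Psi=t$, the map $\Psi$ is a local diffeomorphism at $0$, and the other coordinate functions are those carried over by $g$.

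The crux --- and the step I expect to be the main obstacle --- is verifying that $\Psi^*\omega=\omega_0$. The idea is to evaluate $\Psi^*\omega$ on tangent vectors decomposed along $\partial_s$, $\partial_t$, and the $V$-directions: because $\phi^{q_1}_\tau$ and $\phi^{p_1}_\tau$ are symplectic and commute (so each preserves the other's generator), and because flowing shifts $q_1$ or $p_1$ by the flow time while fixing the other of the two, all mixed terms vanish --- in particular no $ds\wedge dx'$-type terms survive, as the relevant forms pull back to differentials of functions vanishing on $Y$ --- leaving $\pm\,ds\wedge dt$ on the $(s,t)$-plane and $g^*(\omega|_Y)=\omega_0'$ on the $V$-factor. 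After a harmless sign normalization (replacing $t$ by $-t$ if needed) $\Psi$ is then a symplectomorphism with $q_i\mapsto x_i$ for $i\in A$ and $p_j\mapsto y_j$ for $j\in B$, which is the theorem. The rest is bookkeeping: keeping the sign conventions for the Poisson bracket and the Hamiltonian flow consistent throughout, and checking in Move 1 that the enlarged family still satisfies all the hypotheses so that Move 2 applies without change.
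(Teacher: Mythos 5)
The paper does not actually prove \thref{StrongDarboux}: it cites \cite[Theorem 21.1.6]{Hor}, remarks that ``there is a proof of this using Frobenius's theorem,'' and then establishes only the special case \thref{OneVariableDarboux} (a single function with nonvanishing $x_1$-derivative, i.e.\ essentially $|A|+|B|=1$ after normalization) by Moser's homotopy method. What you have written is a sketch of the Frobenius-flavored proof the paper alludes to, and as far as I can check it is sound. Move~1 is the simultaneous straightening of commuting, linearly independent fields, and your independence argument for $dp_1$ --- it pairs nontrivially with $H_{q_1}$ while every original differential annihilates $H_{q_1}$ by the bracket hypotheses --- is exactly the right one; Move~2 is standard symplectic reduction along $Y=\{q_1=p_1=0\}$, using $\{q_1,p_1\}\neq0$ to see that $TY$ is the symplectic complement of $\mathrm{span}(H_{q_1},H_{p_1})$, with tangency of the remaining Hamiltonian fields to $Y$ carrying the bracket and independence data down to $(Y,\omega|_Y)$. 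The gluing computation of $\Psi^*\omega$ does go through as you anticipate: each flow is symplectic and preserves the other's generator, the mixed terms vanish because (for instance) $\omega(H_{p_1},\cdot)=-dp_1(\cdot)$ pulls back along $\phi^{q_1}_{-t}\circ g$ to $d(p_1\circ g)=0$, and one is left with $\pm\,ds\wedge dt+g^*(\omega|_Y)$, the sign being fixable by reflecting $t$. By contrast, the paper's route for its special case writes $g$ with $p\circ g=x_1$ via the implicit function theorem, interpolates $\omega_t=(1-t)\omega_0+t\,g^*\omega_0$, and solves the Moser equation $i_{v_t}\omega_t+\mu=dk_t$ under the constraint $v_t^{x_1}=0$; that produces an explicit isotopy fixing the $x_1$-levels, but it does not obviously scale to several simultaneous $q_i$ and $p_j$, which is why the paper stops at the special case. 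Your Frobenius-plus-reduction induction is the argument that handles the full theorem and is the standard one; to parallel what the paper actually proves, you would instead restrict to a single function and run Moser's method as in \thref{OneVariableDarboux}.
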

There is a proof of this using Frobenius's theorem. We shall prove a special case of this using Moser's homotopy method:

\begin{proof}[Proof of \thref{OneVariableDarboux}]
For a differentiable function $\ell$, we shall use $\ell_{x_i}$ to denote differentiation in $x_i$, $\ell_{y_i}$ to denote differentiation in $y_i$. Since $p_{x_1} \neq 0$, we can find $g: U \to \mathbb{R}^{2n}$ such that $g(x, y) = (h(x, y), x', y)$ where $x' = (x_2, \dots, x_n)$ and $p(g(x, y)) = x_1$. This is because if we let $q(x, y, z) = p(x, y) - z$, where $z \in \mathbb{R}$, then since $q_{x_1} = p_{x_1} \neq 0$ near some $x_0$, we can write $x_1 = h(z, x', y)$ for $(z, x', y) \in U$ for some $U$ and for smooth $h$ by the implicit function theorem. Treating $z$ as the first coordinate of $\mathbb{R}^{2n}$, we obtain $p(h(x, y), x', y) = x_1$ locally. \\\\
Let 
$$\omega_1 = g^* \omega_0 = \sum_{i = 1}^n h_{x_i} dx_i \wedge dy_1 + \sum_{i = 2}^n h_{y_i} dy_i \wedge dy_1 + \sum_{i = 2}^n dx_i \wedge dy_i.$$ 
Letting 
$$\mu_0 = \sum x_i dy_i,$$
we have $\omega_1 - \omega_0 = g^* \omega_0 - \omega_0 = (g^* - \text{Id}^*)\omega_0 = (g^* - \text{Id}^*)d\mu_0 = d((g^* - \text{Id}^*)\mu_0) = d((h - x_1) dy_1) = d\mu$ where we define $\mu = (h - x_1) dy_1$. In view of Moser's method, we want to find $v_t$ such that $v_t(x_1) = 0$ and
$$d(i_{v_t}\omega_t + \mu) = 0.$$
When unambiguous, we shall write an such as $v_t$ that depends on $t$ as just $v$, omitting the time $t$. Letting 
$v = \sum_{i = 1}^n \left(v^{x_i} \frac{\partial}{\partial x_i} + v^{y_i} \frac{\partial}{\partial y_i}\right) = (v^x, v^y) = (0, v^{x'}, v^y)$, we have
$$i_{v} \omega_t = -v^{y_1}\left(1 - t + th_{x_1}\right)dx_1 - \sum_{i = 2}^n \left(v^{y_1}th_{x_1} - v^{y_i} \right) dx_i$$
$$+ t\sum_{i = 2}^n \left(v^{y_i} h_{y_i} + v^{x_i}h_{x_i}\right)dy_1 - \sum_{i = 2}^n \left(v^{y_1} th_{y_i} - v^{x_i} \right) dy_i$$
Thus, it suffices to solve for $v_t$ and $k_t$ in the following equation. 
\begin{equation}
\mu + i_{v}\omega_t = dk_t = \sum_{i = 1}^n k_{x_i} dx_i + k_{y_i} dy_i. \tag{1}    
\end{equation}
This is equivalent to
\begin{equation}
v^{x_1} = 0\tag{2} 
\end{equation}
\begin{equation}
v^{y_1} = -\frac{k_{x_1}}{1 - t + th_{x_1}} \tag{3}    
\end{equation}
\begin{equation}
v^{x_i} = -\frac{tk_{x_1}h_{y_i}}{1 - t + th_{x_1}} + k_{y_i}, \text{ for } 2 \le i \le n \tag{4}
\end{equation}
\begin{equation}
v_t^{y_i} = -\frac{tk_{x_1}h_{x_i}}{1 - t + t h_{x_1}} + k_{x_i}, \text{ for } 2 \le i \le n. \tag{5}    
\end{equation}
\begin{equation}
t\sum_{i = 2}^n \left(v^{y_i} h_{y_i} + v_t^{x_i}h_{x_i}\right) + h - x_1 = k_{y_1}. \tag{6}    
\end{equation}
where (2) is by assumption on $v$, (3) is obtained from equating the two $dx_1$ terms in (1), (4) is obtained from equating the $dy_i$ terms in (1) and by (3), (5) is obtained from equating the $dx_i$ terms in (1) and (3), and (6) is obtained from equating the $dy_1$ terms in (1). These equations are well defined since $h_{x_1} > 0$ because $g_{x_1} > 0$ and since $\frac{d}{dx}f^{-1}(x) = \frac{1}{f^{-1}(f'(x))}$. From the combination of (5), (4), and (3), we obtain from (5)
\begin{equation}
\nabla k_t \cdot n = h - x_1 \tag{6}    
\end{equation}
where
$$n = \left(\sum_{i = 2}^n \frac{t^2 h_{y_i}^2 + h_{x_i}^2}{1 - t + th_{x_1}}, -th_{y_2}, -th_{y_3}, \dots, -th_{y_n}, 1, -th_{x_2}, -th_{x_3}, \dots, -th_{x_n}\right).$$
Thus, a solution of $k$ in (6) yields a solution for $v$ and by Moser's method a symplectomorphism that preserves the $x_1$ coordinate.
We now use the method of characteristics to solve this formula and find that the PDE admits a local solution: we find $p = (p_x, p_y)$ such that
$$\frac{dp^{x_i}}{ds} = n^{x_i}(p)$$
$$p(s = 0) = (x_0, y_0)$$
where $n^{x_i}$ is the $x_i$th coordinate of $n$. Then we can obtain a local solution to the system of ODEs and by shrinking the neighborhood, we obtain a solution for all time in $[0, 1]$. Since $\frac{\partial h}{\partial x_1} > 0$. Thus, there exists a vector field $v$ such that $v^{x_1} = 0$ and it follows that there exists $\rho_t$ such that $\frac{d}{dt} \rho_t = v_t \circ \rho_t$ and $x_1(\rho_t) = x_1$, and $\rho_t^* \omega_t = \omega_0$. Setting $f = \rho_1 \circ g$ and $U$ be the neighborhood we constructed when we inverted in $x_1$, we have our desired symplectomorphism.
\end{proof}

\begin{corollary}
Suppose $p: \mathbb{R}^{2n} \to \mathbb{R}$ is a smooth function with $\nabla p \neq 0$. Then there exists an open neighborhood $U$ of $\mathbb{R}^{2n}$ and a symplectomorphism $f: U \to \mathbb{R}^{2n}$ such that $f^*p = x_1$ on $U$.
\end{corollary}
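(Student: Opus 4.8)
The plan is to deduce the corollary from \thref{OneVariableDarboux} by precomposing $p$ with a suitable constant-coefficient (linear) symplectomorphism of $(\mathbb{R}^{2n},\omega_0)$, chosen so that the resulting function has strictly positive partial derivative in $x_1$ near the point of interest. In other words, the only thing missing compared to the theorem is that $\nabla p$ need not point in the $\partial/\partial x_1$ direction, and linear symplectic changes of coordinates are enough to fix that.

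Fix a point $z_0=(x_0,y_0)$ at which $\nabla p\neq 0$ and set $v:=\nabla p(z_0)$. I would invoke the standard fact that $\mathrm{Sp}(2n,\mathbb{R})$ acts transitively on nonzero vectors: since $w\mapsto\omega_0(v,w)$ is a nonzero linear functional it equals $1$ at some $b$, the space $W:=\{w:\omega_0(v,w)=\omega_0(b,w)=0\}$ is a symplectic subspace of dimension $2n-2$, and iterating inside $W$ produces a symplectic basis whose first two members are $v$ and $b$. Let $L:\mathbb{R}^{2n}\to\mathbb{R}^{2n}$ be the linear map carrying the standard symplectic basis onto this one, so that $L^*\omega_0=\omega_0$ and $L(\partial/\partial x_1)=v$. (Concretely one can also build such an $L$ out of transpositions of the coordinate pairs $(x_i,y_i)$ and the rotations $(x_j,y_j)\mapsto(y_j,-x_j)$ within a single pair, using only that some first partial of $p$ at $z_0$ is nonzero.)

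Now set $q:=p\circ L$. By the chain rule $\partial q/\partial x_1(z)=\nabla p(Lz)\cdot v$ for all $z$, and at $z_1:=L^{-1}(z_0)$ this equals $\nabla p(z_0)\cdot v=|\nabla p(z_0)|^2>0$; by continuity $\partial q/\partial x_1>0$ on some open neighborhood $O$ of $z_1$. Applying \thref{OneVariableDarboux} to $q$ at $z_1$ yields a neighborhood $U\ni z_1$ and a symplectomorphism $\tilde f:U\to\mathbb{R}^{2n}$ with $\tilde f^*q=x_1$, i.e.\ $p\circ L\circ\tilde f=x_1$ on $U$. Taking $f:=L\circ\tilde f$, which is a symplectomorphism as a composition of symplectomorphisms, we get $f^*p=\tilde f^*(L^*p)=\tilde f^*q=x_1$ on $U$, the desired conclusion (with $U$ a neighborhood of $L^{-1}(z_0)$).

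There is no genuine obstacle here; the one point that deserves care is the transitivity of the linear symplectic group on nonzero vectors (equivalently, that every nonzero vector extends to a symplectic basis), while the rest is routine bookkeeping of pullbacks under composition.
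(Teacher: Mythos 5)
Your proof is correct and follows essentially the same strategy as the paper: precompose $p$ with a linear symplectomorphism that sends $\partial/\partial x_1$ to $\nabla p(z_0)$, so that the composite has positive $x_1$-derivative near the relevant point, and then invoke \thref{OneVariableDarboux}. The paper achieves the rotation with ``a unitary matrix $M$'' followed, if needed, by the reflection $x\mapsto -x$; your version is more careful on the one point that actually matters, namely that the linear change of coordinates must be \emph{symplectic} so that the final composition $L\circ\tilde f$ is again a symplectomorphism. An arbitrary orthogonal rotation of $\mathbb{R}^{2n}$ need not preserve $\omega_0$, so the paper's phrasing only works if ``unitary'' is read as an element of $U(n)=O(2n)\cap \mathrm{Sp}(2n,\mathbb{R})$ (which does act transitively on directions and is symplectic). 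Your explicit appeal to transitivity of $\mathrm{Sp}(2n,\mathbb{R})$ on nonzero vectors, via extension of $v$ to a symplectic basis, removes any ambiguity and is the cleaner route; the rest of the bookkeeping of pullbacks matches the paper.
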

\begin{proof}
There is a neighborhood for which $\nabla p$ is nonzero. Translate $p$ so that $\nabla p$ is nonzero in a neighborhood of the origin. There exists a neighborhood $U$ and a unitary matrix $M$ such that $p(M)$ has positive derivative at $x_1$ in some open set $O$ inside $U$. To see this, rotate the input until we get a nonzero derivative in $x_1$ near some neighborhood. If the derivative in $x_1$ is negative everywhere, reflect across the origin (e.g. $x \mapsto -x$) to ensure that the derivative in $x_1$ is positive. Then apply the theorem above with $p = p \circ M$ with $O$ the neighborhood.
\end{proof}

\section{Area Preserving Diffeomorphisms}

\subsection{Extending a Diffeomorphism of $S^1$}
Here, we will prove a conservative extension result for $S^1$. The results below are inspired by \cite{Wu} and according to \cite{Wu}, to have been known by the authors of \cite{BCW} and \cite{BGV}. 
Note: in this section, a function $f$ on $S^1$ will be identified with its lift $F: \mathbb{R} \to \mathbb{R}$ satisfying $F(x + 1) = F(x) + 1$. Its derivative is then a periodic function on $\mathbb{R}$ with period $1$ (instead of the usual convention of $2\pi$)

\begin{proof}[Proof of \thref{permute}]
Suppose without a loss of generality that $f$ has positive orientation, or that $f'(\theta) > 0$ for each $\theta$. Let $\omega_0$ denote the standard symplectic form $ds \wedge d\theta$ on the cylinder. We first extend $f$ to a diffeomorphism of $\mathbb{R}^2$ to itself such that it is the identity outside a neighborhood of the circle. The map $f \times I$ is a diffeomorphism except at the origin, where it is not smooth. Let $\chi: \mathbb{R} \to \mathbb{R}$ be a nonnegative $C^\infty$ monotone smooth function equal to $0$ on a neighborhood of $1$ and equal to $1$ outside some neighborhood not containing the origin. Then $g(s, \theta) = (s, (1 - \chi(s))f(\theta) + \chi(s)\theta) = (s, h(s, \theta))$ is a diffeomorphism of $\mathbb{R}^2$ to itself that extends $f$ on $S^1$. It is evident that it is a smooth $C^{k + 1}$ function. The only thing to check is that it is a diffeomorphism. Since $g$ preserves each of the $s$-lines, it suffices to check that it is bijective and has nonzero derivative in its $\theta$-term. Injectivity follows from $\frac{\partial h}{\partial \theta}$ is always positive and since $\int_0^1 (1 - \chi(s))f(\theta) + \chi(s)\theta d\theta = (1 - \chi(s)) + \chi(s) = 1$ (so the map is degree $1$). Surjectivity follows from the latter statement of the degree of the map $h$ in $\theta$ is nonzero. Hence, the function is injective. To check surjectivity, it suffices to check on level sets of $s$. 
Since $g$ is $C^{k + 1}$, we have $g^* ds \wedge d\theta = \frac{\partial h}{\partial \theta} ds \wedge d\theta = \omega_1$ is a $C^{k}$ differential form. Let $\omega_t = t\omega_1 + (1 - t) \omega_0$ We apply Moser's argument here to $\omega_0$ and $\omega_1$ via finding a vector field $v_t$ defined on $[0, 1]$ such that
\begin{equation}
\frac{d}{dt} \rho_t = v_t (\rho_t), \rho_t^*\omega_t = \omega_0, \rho_0 = \text{Id} \text{, } v_t^\theta = 0 \text{, and } v_t(s = 1) = 0 \tag{1}.
\end{equation}
Letting $\mu = (h - \theta) ds$, it suffices to find $v_t$ such that
\begin{equation}
i_{v_t} \omega_t + \mu = dk = \frac{\partial k}{\partial s} ds + \frac{\partial k}{\partial \theta} d\theta. \tag{2}   
\end{equation}
Then
\begin{equation}
i_{v_t} \omega_t + \mu = v_t^s\left(t\frac{\partial h}{\partial \theta} + 1 - t\right) d\theta + (h - \theta) ds \tag{3}    
\end{equation}
so
\begin{equation}
k(s, \theta) = \int_1^s [h(p, \theta) - \theta] dp \tag{4}   
\end{equation}
so
\begin{equation}
v_t^s = \frac{1}{t\frac{\partial h}{\partial \theta} + 1 - t}\int_1^s \frac{\partial h}{\partial \theta}(p, \theta) - 1 dp. \tag{5}    
\end{equation}
Notice that since $\frac{\partial h}{\partial \theta} = (1 - \chi(s))f'(\theta) + \chi(s) > 0$, the denominator of (5) is nonzero. As $h$ is $C^{k + 1}$, $\frac{\partial h}{\partial s}$ is $C^k$, so $v$ is $C^k$, so the flow generated by $v$ is $C^k$. Notice that outside a small neighborhood of the circle, $v_t$ is $0$, so the isotopy is the identity outside a neighborhood of the circle. Thus, we may be conjugate with the symplectomorphism from the cylinder to the plane $(s, \theta) \mapsto (s\cos(\theta), s\sin(\theta))$ minus the origin and extend it to the entire plane by sending the origin to itself. This is $C^k$ since it is smooth on the origin because it is the identity near the origin. It locally preserves the normal bundle because $\chi$ is identically equal to $0$ near a neighborhood of $S^1$ and since our vector field $v$ does not move in the $\theta$ direction.
\end{proof}

The proof given here uses a construction that is essentially identical to one given in \cite{BCW} and \cite{BGV} and the proof here closely follows the proof given in \cite{BCW}.
\begin{remark}\thlabel{Uniform}
Let $M_1$ and $M_2$ be two $n$-dimensional smooth manifolds. If one is given a diffeomorphism $f: M_1 \to M_2$, one can extend this to a symplectomorphism $(f, (df^*)^{-1})$ of the cotangent bundles $T^*M_1 \to T^*M_2$. This construction is essentially the unique symplectomorphism which takes the fibers of $T^*M_1$ to $T^*M_2$. However, it loses a derivative. 
\end{remark}
\begin{proof}[Proof of \thref{Circle}]
We shall construct the symplectomorphism as a map $\varphi$ on $\mathbb{R}^2 = (x, y)$ such that $\varphi(x + 1, y) = \varphi(x, y) + 1$ and outside a neighborhood of the $x$-axis it is the identity so it is a symplectomorphism on the cylinder. Then since it is the identity outside the $s = 0$ circle, we can conjugate it with the symplectomorphism between the cylinder and the plane minus the origin via $(s, \theta) = ((s + 1)\cos(\theta), (s + 1)\sin(\theta))$ and extend it to the origin by sending the origin to itself. Then since the map is the identity near the origin, it is sufficiently smooth. Instead of constructing the symplectomorphism like we constructed above, we construct a generating function for the symplectomorphism. Recall that a generating function $S$ is a $C^{k + 1}$ function such that the matrix $\frac{\partial^2S}{\partial x \partial y}$ is locally invertible. If we have a function $S$ such that $\frac{\partial^2S}{\partial x \partial y}$ is globally invertible, it will only define a local symplectomorphism, because we have our usual bijectivity issue when we apply the implicit function theorem. However, as we shall see in \thref{global}, if $S$ has finite distance from $S_0$, then the generating function gives a global symplectomorphism. \\\\
Instead of following the usual convention that the generating function defines a local symplectomorphism
$$\varphi \left(x, \frac{\partial S}{\partial x}(x, y)\right) = \left(y, -\frac{\partial S}{\partial y}(x, y) \right)$$
we follow the convention that
$$\varphi \left(x, \frac{\partial S}{\partial x}(x, y)\right) = \left(\frac{\partial S}{\partial y}(x, y) , y\right).$$
This occurs when we invert $\nu$ in the equation $\varphi(x, y) = (\xi, \nu)$ so we end up with coordinates on $(x, \nu)$. Thus, the generating function $S_0$ of the identity function on $\mathbb{R}^2$ is $xy$.
\begin{lemma}\thlabel{global}
Let $S: \mathbb{R}^{2n} \to \mathbb{R}^{2n}$ such that $\det(d_xd_yS) \neq 0$. Let $S_0 = \langle x, y \rangle$ denote the generating function of the identity. If $d_{C^1}(S, S_0) < \infty$, then we in fact obtain a global symplectomorphism $\varphi_S$.
\end{lemma}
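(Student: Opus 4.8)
The plan is to realize $\varphi_S$ as $\Psi \circ \Phi^{-1}$, where $\Phi(x,y) = (x, \partial_x S(x,y))$ and $\Psi(x,y) = (\partial_y S(x,y), y)$, and to show that the hypothesis forces both $\Phi$ and $\Psi$ to be \emph{global} diffeomorphisms of $\mathbb{R}^{2n}$. Composing then yields a global diffeomorphism, and the pointwise computation already carried out above shows it is a symplectomorphism wherever it is defined, hence everywhere.

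First I would write $S = S_0 + R$ with $S_0 = \langle x, y\rangle$, so that $\partial_x S(x,y) = y + \partial_x R(x,y)$ and $\partial_y S(x,y) = x + \partial_y R(x,y)$, and record that $d_{C^1}(S,S_0) < \infty$ means exactly that the first derivatives of $R$ are uniformly bounded: $\|\partial_{x_i} R\|_\infty, \|\partial_{y_j} R\|_\infty \le C$ for some $C$. The map $\Phi$ fixes the $x$-coordinate, so it is a diffeomorphism of $\mathbb{R}^{2n}$ if and only if for each fixed $x$ the map $F_x \colon y \mapsto y + \partial_x R(x,y)$ is a diffeomorphism of $\mathbb{R}^n$. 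Its Jacobian is $I + d_y\partial_x R$, which is the transpose of $d_xd_yS$ and hence invertible everywhere by hypothesis, so $F_x$ is a local diffeomorphism; and the uniform bound gives $|F_x(y)| \ge |y| - C \to \infty$, so $F_x$ is proper. A proper local diffeomorphism of $\mathbb{R}^n$ is a covering map with finite fibers, and $\mathbb{R}^n$ is simply connected, so $F_x$ is a global diffeomorphism (equivalently, invoke the Hadamard--Caccioppoli global inverse function theorem). Joint smoothness of $\Phi^{-1}(x,\xi) = (x, F_x^{-1}(\xi))$ in $(x,\xi)$ is automatic since $\Phi$ is a local diffeomorphism at every point. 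The identical argument, now fixing $y$ and using that $x \mapsto x + \partial_y R(x,y)$ is a proper local diffeomorphism, shows $\Psi$ is a global diffeomorphism of $\mathbb{R}^{2n}$.

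Then $\varphi_S := \Psi \circ \Phi^{-1}$ is a diffeomorphism of $\mathbb{R}^{2n}$, and by construction $\varphi_S(x, \partial_x S(x,y)) = \Psi(x,y) = (\partial_y S(x,y), y)$, which is precisely the generating-function relation in the convention fixed above. It remains to note $\varphi_S^*\omega_0 = \omega_0$: this is a pointwise identity of two-forms, and the computation $dx \wedge d(\partial_x S) = S_{xy}\,dx\wedge dy = d(\partial_y S)\wedge dy$ recorded above verifies it at every point of the (now global) domain. Hence $\varphi_S$ is a global symplectomorphism, and if $S \in C^{k+1}$ then $\varphi_S \in C^k$.

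The main obstacle — really the only nontrivial point — is the passage from ``local symplectomorphism with globally non-vanishing $\det(d_xd_yS)$'' to ``global symplectomorphism''. Non-vanishing of the determinant alone does not suffice, since properness of the $F_x$ can fail; the role of the finiteness of $d_{C^1}(S,S_0)$ is exactly to supply the linear-growth estimate $|F_x(y)| \ge |y| - C$ that makes each $F_x$ proper, after which simple connectivity of $\mathbb{R}^n$ closes the argument. This is also the reason, as remarked above, that the same reasoning cannot be run verbatim over a domain like the torus.
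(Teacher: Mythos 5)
Your proof is correct and follows essentially the same route as the paper: use the bound $d_{C^1}(S, S_0) < \infty$ to show that the fiberwise maps $y \mapsto \partial_x S(x,y)$ (resp.\ $x \mapsto \partial_y S(x,y)$) are proper local diffeomorphisms, then invoke the Hadamard global inverse function theorem, with joint smoothness of the inverse coming for free from invertibility of the full Jacobian. You are slightly more thorough than the paper in explicitly verifying that \emph{both} $\Phi(x,y)=(x,\partial_x S)$ and $\Psi(x,y)=(\partial_y S,y)$ are global diffeomorphisms of $\mathbb{R}^{2n}$ before forming $\varphi_S=\Psi\circ\Phi^{-1}$ --- the paper argues only the inversion in $y$ and then asserts the conclusion --- but the underlying mechanism is identical.
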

\begin{proof}
We prove that we can globally invert
$$\frac{\partial S}{\partial x}(x, y)$$
in $y$. It suffices to find a global inverse, since a global inverse would be identical to local inverses, so a global inverse would have the correct regularity. Fixing $x$, this becomes a function $g$ in $y$. Then the fact that $d_{C^1}(S, S_0) < \infty$ translates to the fact that there exists a constant $L$ such that for each $y \in \mathbb{R}^{n}$, $d(g(y), y) < L$. Let $B$ be a bounded subset of $\mathbb{R}^n$. Suppose $g(y) \in B$. Then since $d(g(y), y) < L$, it follows that $y \in B_L(B)$ where $B_L(S)$ is the $L$-ball of a set $S$. Thus, the preimage $g^{-1}$ of a bounded set is bounded, and since the preimage of closed sets are closed, it follows that $g$ is proper, so by the Hadamard global inverse function theorem (see below or \cite{H}, \cite{H1}, \cite{H2}), it follows that $g$ is globally invertible. Hence, this defines a global symplectomorphism of $\mathbb{R}^{2}$.
\begin{theorem}[Hadamard Global Inverse Function Theorem]\thlabel{inverse}
Let $M$ and $N$ be smooth connected manifolds of the same dimension. Suppose $f: M \to N$ is a $C^k$ function such that $f$ is proper, the Jacobian of $f$ is nonzero, and $N$ is simply connected. Then $f$ is a $C^k$ diffeomorphism.
\end{theorem}
While we may thus globally invert $g$ in $y$ to obtain $j$, $j$ may not have the appropriate regularity. We prove that it does by proving that it coincides with a local implicit function $h$. For all $(x, y)$, we have
$$\left(x, \frac{\partial S}{\partial x}(x, h(x, y))\right) = \left(x, \frac{\partial S}{\partial x}(x, g(x, y))\right) (= (x, y))$$
so applying $(Id, g)$ to both sides, we obtain
$$\left(x, g\left(\frac{\partial S}{\partial x}(x, h(x, y))\right)\right) = \left(x, g\left(\frac{\partial S}{\partial x}(x, g(x, y))\right)\right)$$
$$(x, h(x, y)) = (x, g(x, y))$$
so the global inverse agrees with the local inverse and thus has the appropriate regularity.
\end{proof}
We now continue the proof of \thref{Circle}. Rotate the circle so that $f(0) = 0$. Fix $\epsilon > 0$ and let $\chi: \mathbb{R} \to \mathbb{R}$ be a $C^\infty$ function such that $\chi$ is supported in $(-1, 1)$, $\chi(0) = 1$, $\chi^{(k)}(0) = 0$ for all $k \ge 1$, and $\int \chi(x) dx = 1$. Let $\rho: \mathbb{R} \to \mathbb{R}$ be a $C^\infty$ function with $\text{supp}(\rho) \subseteq (-\epsilon, \epsilon)$ and $\rho \equiv 1$ on $(-\epsilon/2, \epsilon/2)$. Let $a:S^1 \to \mathbb{R}$ be defined by $a(x) = f'(x) - 1$. Because $f'$ is periodic, $a$ is well-defined on $S^1$. Note that
\begin{equation}
\int_{u}^{u + 1} a(t) dt = f(u + 1) - f(u) - (u + 1) + u = f(u) + 1 - f(u) - u - 1 + u = 0\tag{1}    
\end{equation}
for all $u$. Consider the map $Q: S^1 \times \mathbb{R} \to \mathbb{R}$ with
$$Q(x, y) = y\int_0^x \int_{-\infty}^\infty \chi(t)a(s - ty) dt ds = \text{sign}(y) \int_0^x \int_{-\infty}^\infty \chi\left(\frac{s - t}{y}\right) a(t) dt ds.$$
Let $S = xy + \rho(y) Q(x, y)$. We can calculate that
\begin{align*}
\frac{\partial Q}{\partial y} &= \int_0^x \int_{-\infty}^\infty \chi(t)a(s - ty) dtds - y\int_0^x \int_{-\infty}^\infty \chi(t)t a'(s - ty) dt ds\\
&= \int_0^x \int_{-\infty}^\infty \chi(t) a(s - ty) dt ds - \int_0^x \int_{-\infty}^\infty \chi(t)a(s - ty) dt ds - \int_0^x \int_{-\infty}^\infty \chi'(t) t a(s - ty) dt ds \\
&= -\int_0^x \int_{-\infty}^\infty \chi'(t) t a(s - ty) dt ds \\
&= -\int_{-\infty}^\infty \chi'(t) t \int_{- ty}^{x - ty} a(s) ds dt\tag{1}
\end{align*}
(where the second equality is obtained by integration by parts) and setting $y = 0$, we see that it is equal to
$$\left(-\int_{-\infty}^\infty \chi'(t)t dt \right)\left(\int_0^x a(x) dx \right) = \int_0^x a(x) dx = f(x) - f(0) - x = f(x) - x.$$
Hence
$$\frac{\partial S}{\partial y}(x, 0) = f(x).$$
Also,
\begin{equation}
\frac{\partial S}{\partial x}(x, 0) = y + y\int_{-\infty}^\infty \chi(t) a(x - ty) dt\tag{2}
\end{equation}
and setting $y = 0$ we obtain that the quantity is indeed $0$. In addition, both formulae (1) and (2) show that $Q$ has one more derivative than $f$. Furthermore,
\begin{align*}
Q(x + 1, y) &= y\int_0^{x + 1} \int_{-\infty}^\infty \chi(s)a(t - sy)ds dt \\
&= y\int_{-\infty}^\infty \chi(s) \int_{-sy}^{x + 1 - sy} a(t) dt ds \\
&= y\int_{-\infty}^\infty \chi(s) \left(\int_{-sy}^{x - sy} a(t) dt + \int_{x - sy}^{x + 1 - sy} a(t)dt \right) ds \\
&= y\int_{-\infty}^\infty \chi(s) \int_{-sy}^{x - sy} a(t) dt ds \\
&= Q(x, y)
\end{align*}
%\begin{align*}
%\frac{\partial Q}{\partial y}(x + 1, y) &= -\int_{-\infty}^\infty \chi'(t) t \left(\int_{-ty}^{x - ty} a(s) ds + \int_{x - ty}^{x + 1 - ty} a(s) ds\right) dt\\
%&= -\int_{-\infty}^\infty \chi'(t) t \int_{-ty}^{x - ty} a(s) dsdt \\
%&= \frac{\partial Q}{\partial y}(x, y)
%\end{align*}
so we can calculate that
$$\frac{\partial S}{\partial x}(x + 1, y) = \frac{\partial S}{\partial x}(x, y)$$
and
$$\frac{\partial S}{\partial y}(x + 1, y) = \frac{\partial S}{\partial y}(x, y) + 1.$$
This shows that the diffeomorphism $\varphi_S$ generated by the generating function $S$ sends $(u, 0)$ to $(f(u), 0)$. Since $\frac{\partial S}{\partial x}$ is periodic in the $x$ coordinate with period $1$ and $\frac{\partial S}{\partial y}$ is has the property that evaluated at $x + 1$ is $1$ more than $x$, the diffeomorphism generated by the generating function defines a diffeomorphism on the neighborhood of $S^1$ on the plane--it defines a symplectomorphism on the cylinder, but since $S = S_0$ outside a neighborhood of $S^1$, it does define a symplectomorphism of the plane. Since $S = S_0$ outside a neighborhood of the circle, $S - S_0$ has finite $C^1$ norm, so by \thref{global}, it follows that the generating function $S$ defines a global symplectomorphism of the cylinder which is the identity outside a neighborhood of the cylinder. Hence, by conjugating by a symplectomorphism from the cylinder to the plane, it follows that there exists a symplectomorphism that extends $f$ and is equal to the identity outside a neighborhood of the identity.

\end{proof}
\begin{remark}
With a little bit of more work, as done in \cite{BCW}, we can show that
$$||S - S_0||_{C^2} \le K||a||_{C^0}.$$
for some constant $K$. 
\end{remark}
\begin{remark}
\cite{BCW} and \cite{BGV} give a higher dimensional analogue of the construction we have. However, as their theorems are different, it would be interesting to see if their method yields a higher dimesional statement of \thref{Circle} (e.g. $\mathbb{T}^n \subset \mathbb{C}^n$).
\end{remark}

\subsection{The Square}
In this section, we prove a conservative extension result for the square, which was defined in section 1 as the boundary of $(0, 1) \times (0, 1)$. Unlike the previous sections, the proof does not use any symplectic geometry. The square is not a smooth manifold and does not have a tubular neighborhood when it is embedded into $\mathbb{R}^2$. Therefore, the methods of the previous sections do not apply here. However, following \cite{DM} lemma 3, given a positive function $f$ on $\mathbb{R}^2$ which integrates to $1$ on the square and is equal to $1$ on the corners of the square, if one can solve for a vector field $X$ such that $\text{div}(X) = f - 1$ and $X = 0$ on the boundary of the square, then setting 
$$X_t = \frac{X}{t + (1 - t)f},$$
we could use Moser's method for the two forms $f(x) dx$ and the standard Lebesgue measure $dx$ with $X_t$ the vector field as in the presentation of Moser's method in \thref{MoserHomotopy}. See \cite{DM} or \cite[Proposition 9.7]{CDK} for further details. \\\\
Since the square is not a differentiable manifold, we must specify the definition of a diffeomorphism of the square. We shall view the square $S$ as a set embedded inside $\mathbb{R}^2$. 
\begin{definition}
A function $\varphi: S \to S$ is a $C^k$ diffeomorphism if there exists some $C^k$ diffeomorphism $\psi: \mathbb{R}^2 \to \mathbb{R}^2$ such that $\psi = \varphi$ on $S$. We shall refer $\psi$ as an \textit{ambient diffeomorphism} of $f$.
\end{definition}

In the proofs below, if not specified, $\epsilon = 0$ or $1$.
\begin{lemma}\thlabel{Separation}
Let $B = [0, 1]^2$ and $f > 0$ a $C^k$ function defined on $B$ which is equal to $1$ at the corners of the square $\{(\epsilon_1, \epsilon_2): \epsilon_i \in \{0, 1\}\}$. Then there exists some $v$, a $C^{k}$ diffeomorphism of $\mathbb{R}^2$ such that $\det(dv) = f$ on $\partial B$ and $v|_{\partial B} = Id$.
\end{lemma}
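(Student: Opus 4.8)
The plan is to reduce the problem to the boundary of the square by a separation-of-variables construction, handling each of the four edges in turn and making sure the local modifications near the four corners patch together consistently. The statement we need is only about the \emph{values} of $\det(dv)$ on $\partial B$ and about $v$ restricted to $\partial B$, so we have considerable freedom: $v$ need only agree with the identity on $\partial B$, and we only need to prescribe the Jacobian determinant there, not on a neighborhood. This suggests constructing $v$ as a small perturbation of the identity supported in a thin collar of $\partial B$.

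First I would treat one edge, say $E = \{0\} \times [0,1]$, and look for $v$ of the form $v(x_1, x_2) = (v^1(x_1,x_2), x_2)$ near $E$, so that $\det(dv) = \partial v^1/\partial x_1$. On $E$ we want this to equal $f(0, x_2)$, and we want $v^1(0, x_2) = 0$ (so that $v|_E = \mathrm{Id}$) together with $v^1 = x_1$ away from the collar. Picking a cutoff $\chi$ in the $x_1$-variable that is $1$ near $x_1 = 0$ and $0$ for $x_1 \ge \delta$, one sets $v^1(x_1, x_2) = x_1 + \chi(x_1)\,x_1\,(f(0,x_2) - 1) \cdot(\text{correction})$; more cleanly, choose $v^1(x_1,x_2) = \int_0^{x_1} \big(1 + \chi(t)(f(0,x_2)-1)\big)\,dt$ for $t$ small, smoothly interpolated back to $x_1$ further out. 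Then $\partial v^1/\partial x_1(0,x_2) = f(0,x_2)$ as desired, $v^1(0,x_2)=0$, and for $\delta$ small enough $\partial v^1/\partial x_1 > 0$ so this is a diffeomorphism of the collar fixing $E$; extend by the identity. This is $C^k$ because $f$ is $C^k$ and $\chi$ is smooth. Repeat the analogous one-variable construction on each of the other three edges, using the coordinate transverse to that edge.

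The main obstacle is the \emph{corners}: the four collar modifications overlap in small squares near $(\epsilon_1,\epsilon_2)$, and there one simultaneously wants $\det(dv)$ to hit $f = 1$ at the corner point while both edge-prescriptions are active. This is exactly why the hypothesis $f(\epsilon_1,\epsilon_2) = 1$ is imposed: at a corner both edge constructions reduce to the identity (since $f-1$ vanishes there), so if the cutoffs are chosen to vanish to high enough order — or simply if one localizes the $x_1$-edge modification to the region $x_2 \in [\eta, 1-\eta]$ away from the corners and separately the $x_2$-edge modification to $x_1 \in [\eta, 1-\eta]$, interpolating $f$ down to $1$ near the corners first — the four pieces have disjoint "active" supports and glue trivially. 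Concretely I would first replace $f$ by $\tilde f$ agreeing with $f$ except in small corner neighborhoods, where $\tilde f$ is bent down to $1$, noting this changes nothing on the parts of $\partial B$ that matter once the corner neighborhoods shrink; but since we must keep the values $f(0,x_2)$ etc. \emph{exactly} on the full edges, the cleaner route is to keep the full edge data and instead choose the transverse cutoffs $\chi_j$ so that the modification on edge $E_j$ is turned off (returns to identity) before reaching the perpendicular edges — possible because near a corner $f$ is close to $1$, so the required modification is small and can be absorbed. Verifying that the resulting globally-defined $v$ is a single $C^k$ diffeomorphism of $\mathbb{R}^2$ (injectivity across the patched regions, positivity of the Jacobian) is the technical heart; away from the corner neighborhoods it follows edge-by-edge from the one-variable argument, and near the corners it follows because $v$ is a $C^0$-small, $C^1$-small perturbation of the identity there.

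Finally, once such a $v$ is in hand, one records that $\det(dv) = f$ holds on all of $\partial B$ by construction, $v|_{\partial B} = \mathrm{Id}$, and $v \in \mathrm{Diff}^k(\mathbb{R}^2)$, which is the assertion of the lemma; note that we make no claim about $\det(dv)$ off $\partial B$, which is what allows the construction to be elementary and to avoid solving the divergence equation globally.
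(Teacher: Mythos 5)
Your overall strategy (collar shears in the transverse variable, with the corner compatibility coming from $f=1$ at the vertices) is the same as the paper's, but your sketch glosses over the single point that makes the construction actually work: normalizing the shear so that it is the identity on the opposite edge (or outside its collar) \emph{and} keeps its Jacobian strictly positive. If you write $v^1(x_1,x_2)=\int_0^{x_1}\bigl(1+\chi(t)(f(0,x_2)-1)\bigr)\,dt$ with a nonnegative cutoff $\chi$ supported in $[0,\delta]$, then $v^1(1,x_2)=1+(f(0,x_2)-1)\int_0^\delta\chi\neq 1$, so $v$ fails to restrict to the identity on the right edge; the phrase "smoothly interpolated back to $x_1$ further out" hides exactly this defect. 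If instead you try to force $v^1$ to equal $x_1$ beyond the collar, you need $\int_0^\delta\chi=0$, which forces $\chi$ to dip negative, and then $1+\chi(t)(f(0,x_2)-1)$ can become nonpositive when $f(0,x_2)$ is far from $1$, killing the diffeomorphism property. The paper resolves this via Proposition A.1 (the $\beta$ of \thref{Potato}): $\beta$ consists of two bumps riding on a carefully chosen constant level $1-g(a,b)$, with $g$ determined by the requirement $\int_0^1\beta=1$ and shown to satisfy $g<1$ so that $\beta>0$. That level-shift-plus-normalization mechanism is the missing ingredient in your proposal, and it is what lets the paper write $v=v_1\circ v_2$ with a single shear $v_1=(u_1(x,y),y)$, $u_1(x,y)=\int_0^x\beta(t,y)\,dt$, handling both the $x=0$ and $x=1$ edges at once (since $u_1(1,y)=\int_0^1\beta=1$), followed by the analogous $v_2$ for the residual $f/\det(dv_1)$ in $y$.

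Your corner discussion also contains a step that would fail: the fallback of "turning off the $E_j$-modification before reaching the perpendicular edges" makes $\det(dv)$ equal to $1$ rather than $f$ on the portions of $E_j$ near the corners, which is wrong unless $f\equiv 1$ there (you only know $f=1$ at the corner point itself). In fact no such localization is needed once $\beta$ is normalized as in \thref{Potato}: the condition $\beta(x,\epsilon)=1$ (forced by $f(\epsilon_1,\epsilon_2)=1$) means the $x$-shear contributes Jacobian $1$ along the whole top and bottom edges, so when the two shears are composed the Jacobians simply multiply to $f$ on all of $\partial B$, corners included. You should drop the localize-away-from-corners idea and instead supply the integral normalization and the positivity argument explicitly.
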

\begin{proof}
We write $v = v_1 \circ v_2$ where $v_1 = (u_1(x, y), y)$ and $v_2 = (x, u_2(x, y))$. We find $u_1$ such that $\det(dv_1)$ is equal to $f$ on the $\epsilon \times [0, 1]$ and such that $u_1(\epsilon, y) = \epsilon$ and $u_1(x, \epsilon) = x$ where $\epsilon = 0, 1$ and for all $x \in [0, 1]$. $u_1$ would therefore preserve the boundary of $B$. We find that $\det(dv_1)(x, y) = \frac{\partial u_1}{\partial x}(x, y)$ and so $\frac{\partial u_1}{\partial x}(\epsilon, y) = f(\epsilon, y)$. Then, by \thref{Potato} in the appendix, there exists $\beta$ a positive $C^k$ function such that $\beta(\epsilon, y) = f(\epsilon, y)$, $\int_0^1 \beta(t, y) dt = 1$ for all $y \in \mathbb{R}$ (it is only necessary for $y \in [0, 1]$ but we just construct for all $y$ just to simplify the argument a bit), and $\beta(x, \epsilon) = 1$. We then define 
$$u_1(x, y) = \int_0^x \beta(t, y) dt.$$
Then $u_1(\epsilon, y) = \epsilon$, $u_1(x, \epsilon) = x$, and $\frac{\partial u_1}{\partial x}(\epsilon, y) = f(\epsilon, y)$. Now set $\det(dv_1)^{-1}f = g$ and we repeat the same process as above for $g$ except with the roles of $x$ and $y$ reversed. Notice however that $g$ is equal to $1$ at the lines $(\epsilon, y)$ and that $\frac{\partial u_2}{\partial y}$ has to necessarily equal to $g$ at $(\epsilon, y)$ since $\frac{\partial u_2}{\partial y}(\epsilon, y) = 1$ because $v_2(\epsilon, y) = y$. Thus, $\det(dv) = \det(dv_1)\det(dv_2)$ on the boundary of $B$ (since $v_1$ and $v_2$ are the identity on $B$) and because $\det(dv_1)\det(dv_2) = f$ on $\partial B$, we conclude that $\det(dv) = f$ on $\partial B$. As $\beta$ is $C^k$, $v_1$ and $v_2$ are both $C^{k}$. Hence, $v$ is $C^k$ as well.
\end{proof}
Applying \thref{Separation} to $f^{-1}$ and observing that $v \equiv Id$ on $\partial B$, we have:
\begin{corollary}\thlabel{update}
Let $B = [0, 1]^2$ and $f$ be a positive $C^k$ function on $\mathbb{R}^2$ which is equal to $1$ at the corners of the square $\{(\epsilon_1, \epsilon_2): \epsilon_i \in \{0, 1\}\}$. Then there exists some $v$, a $C^{k}$ diffeomorphism of $\mathbb{R}^n$ such that the function $\det(dv)f \circ v$ is equal to $1$ on $\partial B$. 
\end{corollary}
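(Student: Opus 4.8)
The plan is to invoke \thref{Separation} with $f$ replaced by its reciprocal $f^{-1}=1/f$. First I would verify that $f^{-1}$ meets the hypotheses of \thref{Separation}: since $f$ is a positive $C^k$ function on $\mathbb{R}^2$, so is $f^{-1}$, and since $f$ equals $1$ at each corner $(\epsilon_1,\epsilon_2)$ with $\epsilon_i\in\{0,1\}$, the same is true of $f^{-1}$. Restricting $f^{-1}$ to $B=[0,1]^2$ and applying \thref{Separation}, I obtain a $C^k$ diffeomorphism $v$ of $\mathbb{R}^2$ with $\det(dv)=f^{-1}$ on $\partial B$ and $v|_{\partial B}=\mathrm{Id}$.

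It then remains only to combine these two facts. Because $v$ restricts to the identity on $\partial B$, for every point $p\in\partial B$ we have $(f\circ v)(p)=f(p)$; hence $f\circ v=f$ along $\partial B$. Therefore on $\partial B$ one has $\det(dv)\,(f\circ v)=f^{-1}f=1$, which is precisely the assertion of the corollary.

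I do not anticipate any genuine obstacle: the corollary is essentially a rephrasing of \thref{Separation}, whose full content — the separation-of-variables construction $v=v_1\circ v_2$ together with the auxiliary function $\beta$ furnished by \thref{Potato} — has already been carried out. The only point worth flagging is that the statement writes ``$\mathbb{R}^n$'' where it should read $\mathbb{R}^2$; no higher-dimensional ingredient enters here.
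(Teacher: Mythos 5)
Your argument is exactly the paper's: apply \thref{Separation} to $f^{-1}$, then use $v|_{\partial B}=\mathrm{Id}$ to get $\det(dv)\,(f\circ v)=f^{-1}f=1$ on $\partial B$. Your note about the typo ($\mathbb{R}^n$ should be $\mathbb{R}^2$) is also correct.
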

The above lemma can also admits a higher dimensional generalization:
\begin{lemma}\thlabel{Hypercube}
Let $B = [0, 1]^n$ and $f > 0$ a $C^k$ function defined on $B$ which is equal to $1$ on all the codimension two hyperfaces of $B$, that is the set of all $(x_1, \dots, \epsilon_i, x_{i + 1}, \dots, x_{j - 1}, \epsilon_j, \dots x_n)$ for $\epsilon_i, \epsilon_j \in \{0, 1\}$. Then there exists some $v$ a $C^{k}$ diffeomorphism of $\mathbb{R}^n$ such that $\det(dv)f \circ v$ is equal to $1$ on $\partial B$ and $v|_{\partial B} = Id$.
\end{lemma}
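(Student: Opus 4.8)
The plan is to prove \thref{Hypercube} by imitating the separation-of-variables argument used for \thref{Separation}, peeling off one coordinate at a time. Write the desired map as a composition $v = v_1 \circ v_2 \circ \cdots \circ v_n$, where $v_i$ alters only the $i$-th coordinate:
$$v_i(x) = \bigl(x_1, \dots, x_{i-1},\; u_i(x),\; x_{i+1}, \dots, x_n\bigr), \qquad u_i(x) = \int_0^{x_i} \beta_i(x_1, \dots, t, \dots, x_n)\, dt,$$
where the $\beta_i$ are positive $C^k$ functions on $\mathbb{R}^n$, equal to $1$ for $|x_i|$ large, to be specified. Then $\det(dv_i) = \partial u_i/\partial x_i = \beta_i > 0$, so for fixed values of the remaining coordinates $x_i \mapsto u_i$ is an increasing diffeomorphism of $\mathbb{R}$; hence each $v_i$, and therefore $v$, is a $C^k$ diffeomorphism of $\mathbb{R}^n$, with the $C^k$ regularity of $u_i$ following as in \thref{Separation}. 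Moreover, if $\beta_i \equiv 1$ on every facet $\{x_j = \epsilon\}$ with $j \ne i$ and $\int_0^1 \beta_i(x_1, \dots, t, \dots, x_n)\, dt = 1$ for all choices of the other coordinates, then $u_i \equiv x_i$ on $\partial B$, so each $v_i$ — and hence $v$ — fixes $\partial B$ pointwise.

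Granting such $\beta_i$, the chain rule together with the fact that each $v_j$ fixes $\partial B$ gives, for $x \in \partial B$,
$$\det(dv)(x) = \prod_{j=1}^n \det(dv_j)\bigl(v_{j+1}\circ\cdots\circ v_n(x)\bigr) = \prod_{j=1}^n \beta_j(x).$$
On a facet $F_\ell^\epsilon = \{x_\ell = \epsilon\}$ we have $\beta_j \equiv 1$ for every $j \ne \ell$, so there $\det(dv) = \beta_\ell$. It therefore suffices to arrange $\beta_i = g_i$ on $F_i^0 \cup F_i^1$, where $g_1 := f^{-1}$ and $g_{i+1} := g_i/\beta_i$ (a positive $C^k$ function on $B$): then on $F_\ell^\epsilon$, since $\beta_j \equiv 1$ there for every $j < \ell$, one gets $\beta_\ell = g_\ell|_{F_\ell^\epsilon} = f^{-1}|_{F_\ell^\epsilon}$, i.e. $\det(dv) = f^{-1}$ on $\partial B$. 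Since $v|_{\partial B} = \mathrm{Id}$ forces $f\circ v = f$ on $\partial B$, this yields $\det(dv)\, f\circ v = 1$ on $\partial B$, which is the conclusion.

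What remains is to build $\beta_i$ at each stage: a positive $C^k$ function with prescribed boundary data — $\beta_i = g_i$ on the opposite facets $F_i^0, F_i^1$; $\beta_i \equiv 1$ on $F_j^\epsilon$ for every $j \ne i$; $\int_0^1 \beta_i\, dx_i \equiv 1$; and $\beta_i \equiv 1$ for $|x_i|$ large. The only place these prescriptions can conflict is on the codimension-two hyperfaces $F_i^{\epsilon_i}\cap F_j^{\epsilon_j}$ with $j \ne i$, where they demand $g_i = 1$ — and this is exactly where the hypothesis on $f$ enters. Indeed $g_i = f^{-1}/(\beta_1\cdots\beta_{i-1})$, and on $F_i^{\epsilon_i}\cap F_j^{\epsilon_j}$ each $\beta_m$ with $m < i$ equals $1$ (since $m \ne i$, it is $\equiv 1$ on the facet $F_i^{\epsilon_i}$), while $f = 1$ on this codimension-two hyperface by assumption; hence $g_i = 1$ there. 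The same bookkeeping also gives $g_i|_{F_i^\epsilon} = f^{-1}|_{F_i^\epsilon}$, as used above, and shows that each $g_i$ stays positive and $C^k$. Once this consistency is checked, $\beta_i$ is furnished by the argument behind \thref{Potato}, which adapts verbatim to the $n$-cube with the prescribed pair of opposite facets $F_i^0, F_i^1$ and the value $1$ elsewhere.

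The substantive step is the last paragraph: tracking which of the $\beta_j$ restrict to $1$ on which faces, and verifying that the running quotients $g_i$ remain positive, $C^k$, and identically $1$ on the codimension-two strata meeting direction $i$, so that \thref{Potato} can be invoked at every stage. Everything else — the chain-rule product formula on $\partial B$, the cancellation giving $\det(dv) = f^{-1}$ there, and the regularity and global bijectivity of $v$ — is a direct lift of the two-dimensional argument.
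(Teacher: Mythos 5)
Your proof is correct and follows the same separation-of-variables scheme as the paper: compose $v = v_1 \circ \cdots \circ v_n$ with each $v_i$ altering only one coordinate via $u_i(x) = \int_0^{x_i}\beta_i\,dt$ with $\beta_i$ furnished by \thref{Potato}, and check by induction that the running quotients equal $1$ on the codimension-two faces so that the boundary prescriptions for $\beta_i$ are consistent. Your normalization $g_1 = f^{-1}$ is in fact the one that directly yields $\det(dv) = f^{-1}$ and hence $\det(dv)\,f\circ v = 1$ on $\partial B$ as the lemma states; the paper's written proof instead tracks $g = \det(dv_1)^{-1}\cdots\det(dv_{i-1})^{-1}f$, which carried to the end gives $\det(dv) = f$ on $\partial B$, so it implicitly needs to be applied with $f$ replaced by $f^{-1}$ (as is done explicitly in \thref{update}) to match the stated conclusion.
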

\begin{proof}
We write $v = v_1 \circ v_2 \circ \cdots \circ v_n$ where $v_i = (x_1, \dots, x_{i - 1}, u_i(x_1, \dots, x_n), x_{i + 1}, \dots, x_n)$. We proceed by induction. Set $g = \det(dv_1)^{-1} \cdots \det(dv_{i - 1})^{-1} f$ and suppose that $g = 1$ on $(\epsilon_1, \dots, \epsilon_{i - 1}, x_{i}, \dots, x_n)$ for $\epsilon_j = 0, 1$ and $v_j$ for $1 \le j \le i - 1$ preserve the boundary of $B$. We prove that we can find $h = \det(v_i)^{-1}g$ such that $h = 1$ on $(\epsilon_1, \dots, \epsilon_i, x_{i + 1}, \dots, x_n)$, $u_i(x_i = \epsilon) = \epsilon$, $u_i(x_j = \epsilon, x_i) = x_i$ for $j \neq i$ so that $v_i$ preserves the boundary of $B$. Then $\det(v_i) = \frac{\partial u_i}{\partial x_i}$ and the necessary condition that $\frac{\partial v_i}{\partial x_i}(x_j = \epsilon) = g(x_j = \epsilon)$ for $\epsilon = 0, 1$ and $1 \le j \le i$ which is equivalent to $1 = 1$ (a true statement) and $\frac{\partial v_i}{\partial x_i}(x_i = \epsilon) = g(x_i = \epsilon)$. By \thref{Potato}, there exists a $C^k$ positive function $\beta$ defined on $\mathbb{R}^n$ such that $\beta(x_i = \epsilon) = g(x_i = \epsilon)$, $\int_0^1 \beta(x_i = t) dt = 1$, and for each $1 \le j \le n$, $\beta(x_j = \epsilon) = 1$. Setting
$$u_i(x) = \int_0^{x_i} \beta(x_i = t) dt$$
we have our desired conditions $u_i(x_i = \epsilon) = \epsilon$, $u_i(x_j = \epsilon) = x$ for $j \neq i$, and $\frac{\partial u_i}{\partial x_i}(x_i = \epsilon) = f(x_i = \epsilon)$. We have then constructed a diffeomorphism $v_i$ such that $v_i$ is the identity on the boundary of $B$ and $\det(dv_i) = g$ on the $x_j = \epsilon$ hyperfaces of the cube for $1 \le j \le i$. We can then take $(v_i^{-1})^*g$ and so by induction, our desired $v$ exists.
\end{proof}

Next, we will need Lemma 2 of \cite{M}:
\begin{lemma}\thlabel{mose}
Let $h, g \in C^k(\mathbb{R}^n; \mathbb{R}^+)$, $Q = [0, 1]^n$ such that $h - g = 0$ on $\partial Q$ and $\int_Q h dx = \int_Q g dx$. Then there exists a $C^k$ diffeomorphism $u: \mathbb{R}^n \to \mathbb{R}^n$ such that $g(u(x))\det(du) = h(x)$ and $u|_{\partial Q} = Id$.
\end{lemma}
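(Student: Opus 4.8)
This is Lemma 2 of Moser's original paper \cite{M}, and the plan is to follow his argument, which proceeds by reducing the general case to the one-variable case handled in the previous lemmas via a separation-of-variables/telescoping trick. The starting point is \thref{update}: since $h$ and $g$ agree on $\partial Q$ and both are positive, the ratio $f := h/g$ is a positive $C^k$ function on a neighborhood of $Q$ which equals $1$ on $\partial Q$, hence in particular at the corners and on all codimension-two hyperfaces of $Q$. So by \thref{Hypercube} (applied with this $f$) there is a $C^k$ diffeomorphism $v$ of $\mathbb{R}^n$ with $v|_{\partial Q} = \mathrm{Id}$ and $\det(dv)\, f\circ v \equiv 1$ on $\partial Q$. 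Replacing $g$ by $g_1 := (g\circ v)\det(dv)$, we are reduced to the case where $h - g$ not only vanishes on $\partial Q$ but where, after this normalization, $h/g$ is $1$ to first order along $\partial Q$ in the sense needed below; more precisely we may now assume $\int_Q h = \int_Q g$ still holds (the integral identity is preserved since $\int_Q g_1 = \int_{v(Q)} g = \int_Q g$ as $v$ fixes $Q$ setwise — here one uses that $v$ maps $Q$ to $Q$, which follows from $v|_{\partial Q}=\mathrm{Id}$ and $v$ being a diffeomorphism) and additionally $h = g$ on $\partial Q$.

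With this normalization in hand, the core of the argument is an explicit inductive construction, one variable at a time, exactly as in \thref{Separation} and \thref{Hypercube} but now carrying the volume condition through all of $Q$ rather than just on $\partial Q$. One writes the desired $u$ as a composition $u = u_1 \circ u_2 \circ \cdots \circ u_n$ where $u_i$ adjusts only the $i$-th coordinate, $u_i = (x_1,\dots,x_{i-1}, w_i(x), x_{i+1},\dots,x_n)$, with $w_i$ built by integrating a suitable positive density in the $x_i$ direction and appealing to \thref{Potato} to guarantee the density is $C^k$, positive, integrates to the right amount in $x_i$, and equals $1$ on the relevant faces so that $u_i|_{\partial Q} = \mathrm{Id}$. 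At each stage one sets the "remaining" density $g^{(i)} := \det(du_1)^{-1}\cdots\det(du_{i-1})^{-1} h$ and arranges, via Fubini, that the partial integrals of $g^{(i)}$ over the $x_i$-fibers match those of $g$, which is precisely what is needed so that after the final step $\det(du)\, h\circ u^{-1}$... — more carefully, one arranges $g(u(x))\det(du(x)) = h(x)$ by making the $i$-th step kill the discrepancy in the $i$-th marginal. The global integral identity $\int_Q h = \int_Q g$ is exactly what makes the first marginal condition solvable, and the boundary agreement $h=g$ on $\partial Q$ (together with the earlier normalization) is what makes all the intermediate densities equal $1$ on the faces so that every $u_i$ fixes $\partial Q$.

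The main obstacle — and the reason the lemma is not simply a restatement of \thref{Hypercube} — is controlling the regularity and positivity of the iterated densities $g^{(i)}$ throughout the interior of $Q$, not just on the boundary, while simultaneously keeping each $u_i$ equal to the identity on $\partial Q$. The auxiliary interpolation result \thref{Potato} in the appendix is the technical device that makes this possible: at each stage it produces a positive $C^k$ density with prescribed fiber-integrals and prescribed boundary values. One must check that the hypotheses of \thref{Potato} are met at every step, i.e. that $g^{(i)}$ has fiber-integrals that are themselves $C^k$ and bounded away from $0$, and that the compatibility conditions on faces propagate; this is the bookkeeping-heavy part of the proof but presents no essential difficulty beyond care. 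Once $u$ is constructed on $Q$ with $u|_{\partial Q} = \mathrm{Id}$, one extends it to all of $\mathbb{R}^n$ by the identity outside $Q$ (it is $C^k$ across $\partial Q$ since it equals the identity there to all relevant orders by construction), and composing back with $v$ from the first paragraph yields the diffeomorphism required by the statement.
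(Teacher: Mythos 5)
Your proposal diverges from the paper's proof in several ways, and a couple of those divergences are genuine gaps rather than just stylistic choices.

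The paper's argument does not begin with any normalization via \thref{Hypercube} or \thref{update}; that step in your proposal is unnecessary, since the hypothesis $h = g$ on $\partial Q$ is already exactly what the rest of the argument uses, and your claim that \thref{Hypercube} forces $h/g$ to equal $1$ ``to first order'' along $\partial Q$ does not follow from \thref{Hypercube}, which only controls the value of $\det(dv)\, (h/g)\circ v$ on $\partial Q$, not its transverse derivative. More substantively, the paper constructs two upper-triangular maps $v$ and $w$ with $v_i = v_i(x_1,\dots,x_i) = \int_0^{x_i} h_i\,dt$ and likewise for $w$, where the factors $h_i$ are the explicit conditional densities obtained by the recursion $h/h_n = \int_0^1 h(x_1,\dots,x_{n-1},t)\,dt$; no appeal to \thref{Potato} is made or needed. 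Your proposal instead wants to build $u$ in one pass as $u_1\circ\cdots\circ u_n$ ``killing the $i$-th marginal discrepancy,'' invoking \thref{Potato} to supply the one-variable densities. But \thref{Potato} produces a very particular interpolant (two localized bumps plus a constant) with prescribed endpoint values and unit integral; it is designed to fix a function on $\partial Q$ only, and it cannot realize the specific conditional densities $h/\int_0^1 h\,dt$ one needs in the interior of $Q$. As written, your core step is under-specified in a way that cannot be filled in by \thref{Potato}, so this is a real gap and not just bookkeeping.

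Two further points. First, the paper's key structural observation --- the reason the boundary condition works --- is that $v = w$ on $\partial Q$, proved by induction on $n$ with the base case using $\int_Q h = \int_Q g$; this observation is what turns the two Knothe-type maps into the desired $u = w^{-1}\circ v$. Your sketch never isolates this step, and since you don't separately construct $w$, it isn't clear what replaces it. Second, your final extension ``by the identity outside $Q$'' would not produce a $C^k$ map in general: $u$ agrees with the identity on $\partial Q$ but its derivatives there depend on $h$ and $g$ and are not flat, so the glued map can fail to be even $C^1$ across $\partial Q$. The paper avoids this because $v_i(x) = \int_0^{x_i} h_i\,dt$ is defined on all of $\mathbb{R}^n$ from the start, since $h \in C^k(\mathbb{R}^n;\mathbb{R}^+)$.
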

\begin{proof}
First, we exhibit $v$ and $w$ such that $u = w^{-1} \circ v$ and $\det(dv) = h$, $\det(dw) = g$. We find $v$ and $w$ with $dv$ and $dw$ upper triangular. Letting $v = (v_1, \dots, v_n)$, we have
$$\text{det}(v) = \prod_{i = 1}^n \frac{\partial v_i}{\partial x_i} = h$$
and $v_i(x_1, \dots, x_{i - 1}, \epsilon) = \epsilon$ for $\epsilon = 0, 1$ for all $1 \le i \le n$. We construct $h_i$ such that
$$h = \prod_{i = 1}^n h_i, \hspace{0.1in} \int_0^\epsilon h_i(x_1, \dots, x_{i - 1}, t) dt = \epsilon$$
where $\epsilon = 0, 1$. Therefore, if we let
$$v_i(x) = \int_0^{x_i} h_i(x_1, \dots, x_{i - 1}, t) dt$$
we find $v_i$ that has the desired property. To find $h_i$ that satisfies this, we use induction. For $i = n$, we take
$$\frac{h(x)}{h_n(x)} = \int_0^1 h(x_1, \dots, x_{n - 1}, t) dt$$
so the $h_i$ exists and thus $v$ exists. We can repeat this process to construct $w$ fof $g$. It then remains to show that $v$ and $w$ agree on the boundary of the cube. We prove this by induction on $n$. It is certainly clear that this holds for $x_i = 0, 1$ for $i \neq n$. This is because at $x_i = 0, 1$, $h = g$. At $x_n = 0, 1$, $v_n = w_n = 0,1$, respectively. Since $\frac{h}{h_n} = \frac{g}{g_n}$ on $x_i = 0, 1$ for $i \neq n$, the induction step holds. The statement is true for the base case of $n = 1$ because the integral from $0$ to $1$ of $h$ is equal to the integral from $0$ to $1$ of $g$ as per the assumption of the theorem.
\end{proof}
We thus have the following:

\begin{proof}[Proof of \thref{Singularity}]
Let $\varphi_1$ be an ambient diffeomorphism of $\varphi$ and $f = \det(d\varphi_1)$. By \thref{update}, there exists a diffeomorphism $v: \mathbb{R}^2 \to \mathbb{R}^2$ such that $\det(dv) f \circ v|_S = 1$ where $S$ is the square and $v|_S = Id$. Let $\det(dv) f \circ v = g$. By \thref{mose} applied to $g$ and $1$, there exists a diffeomorphism $u: \mathbb{R}^2 \to \mathbb{R}^2$ such that $\det(du)g \circ u = 1$ and $u|_S = Id$. Letting $\psi = \varphi_1 \circ v \circ u$, we have a diffeomorphism that extends $\varphi$ that is area preserving. Note that since $\varphi_1$ is $C^{k + 1}$, $f$ is $C^k$ and so both $u$ and $v$ are $C^k$. This proves \thref{Singularity}.
\end{proof}

\subsection{An Ambient Dacarogna-Moser Theorem}
In \cite{M}, Moser proves that two volume forms on a manifold are similar to each other via a diffeomorphism: that is, given two $C^k$ volume forms $\sigma$ and $\tau$ that integrate to the same volume, there exists a $C^{k}$ diffeomorphism $\varphi$ such that $\varphi^*\tau = \sigma$. However, this result has no boundary condition on the diffeomorphism that \thref{SmoothBoundary} has. In a later paper \cite{DM}, Dacarogna and Moser prove a similar result with a boundary condition: given two $C^{k, \alpha}$ volume forms $\sigma$ and $\tau$ on a connected bounded open set $U$ with smooth boundary that integrate to the same volume, there exists a $C^{k + 1, \alpha}$ a diffeomorphism $\varphi: U \to U$ which has a continuous extension to the boundary of the manifold such that $\varphi|_{\partial U} = Id$, and $\varphi^*\tau = \sigma$. A modification of their argument (see \cite[Theorem 9.6]{CDK}) shows that the Dacarogna-Moser theorem extends to an ambient result on $\mathbb{R}^n$ if the open set $U$ is simply connected: namely a result stating that if $\sigma$ and $\tau$ are defined on all of $\mathbb{R}^n$ which integrates to the same volume on $U$ and perhaps satisfies some additional properties, then there exists some $\psi: \mathbb{R}^n \to \mathbb{R}^n$ such that $\psi^*\tau = \sigma$ and $\psi|_{\partial U} = Id$. We shall give an elementary proof of that fact and through the process obtain slightly different constraints: that the boundary of $U$ must be connected. \\\\
Furthermore, as shown by \cite{BK} and \cite{Mc}, the Dacarogna-Moser theorem is false for $k = \alpha = 0$. They show specifically that for all $\epsilon > 0$ there exists a continuous function $\gamma: [0, 1]^2 \to [1, 1 + \epsilon]$ such that there exists no bilipschiz function $h: [0, 1]^2 \to \mathbb{R}^2$ such that $\det(dh) = \gamma$. \\\\
\thref{SmoothBoundary} can be considered an ambient result of Dacarogna-Moser's Theorem without the increase in regularity as in the Dacarogna-Moser Theorem. It bears some resemblance to Theorem 5 of \cite{M}. The difference between the two is that \thref{SmoothBoundary} is an ambient result whereas Theorem 5 of \cite{M} only applies to volume forms defined on open sets. \\\\
The proof of \thref{SmoothBoundary} is similar the proof of the main Theorem in \cite{M}. One key difference, however, is \thref{Grid}, which we proved using a ``separation of variables" argument similar to one found in the proof of \cite[Proposition 10]{DM}. The main ingredients in the proof are \thref{Grid} and slightly modified versions of \cite[Lemmas 1, 2]{M}, presented here as \thref{Lemma1} and \thref{mose2}, respectively. Along the way, we establish conservative extension result for a diffeomorphism on the boundaries of two adjacent squares attached together. If not specified, $\epsilon$ or $\epsilon_j$ are equal to $-1, 0, 1$.
\begin{lemma}\thlabel{Grid}
Let $B = [-1, 1]^n$ and $f \in C^k(B; \mathbb{R}^+)$ function defined on $B$ which is equal to $1$ on all the codimension two hyperfaces of each of the side length one hypercubes in $B$ with vertices on integer coordinates, i.e. the set of all $(x_1, \dots, \epsilon_i, x_{i + 1}, \dots, x_{j - 1}, \epsilon_j, \dots x_n)$ for $\epsilon_i, \epsilon_j \in \{-1, 0, +1\}$. Let 
$$\partial'B = \partial B \cup \bigcup_{j = 0}^{n - 1} [-1, 1]^j \times \{0\} \times [-1, 1]^{n - 1 - j}.$$
Then there exists some $v$ a $C^{k + 1}$ diffeomorphism of $\mathbb{R}^n$ such that $\det(dv)f \circ v$ is equal to $1$ on $\partial' B$ and $v|_{\partial' B} = Id$. Furthermore, if $f - 1 \equiv 0$ on a neighborhood of $\partial B$, then we can construct $v$ such that $v \equiv Id$ on a neighborhood of $\partial B$.
\end{lemma}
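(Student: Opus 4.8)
The plan is to reduce \thref{Grid} to the single-cube case \thref{Hypercube} by a separation-of-variables / iterated one-dimensional construction, exactly in the spirit of the proof of \thref{Separation}, but now respecting the extra interior slabs $[-1,1]^j \times \{0\} \times [-1,1]^{n-1-j}$. First I would write $v = v_1 \circ v_2 \circ \cdots \circ v_n$ where $v_i = (x_1, \dots, x_{i-1}, u_i(x), x_{i+1}, \dots, x_n)$ shears only the $i$-th coordinate, so that $\det(dv_i) = \partial u_i/\partial x_i$ and $\det(dv) = \prod_i \partial u_i / \partial x_i$ along $\partial'B$ (since each $v_i$ will be forced to fix $\partial'B$ pointwise). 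Proceeding by induction on $i$, I set $g = \det(dv_1)^{-1}\cdots\det(dv_{i-1})^{-1}\, f$ and arrange inductively that $g \equiv 1$ on every hyperface of the form $\{x_\ell = \epsilon\}$ for $\ell < i$, $\epsilon \in \{-1,0,1\}$. The one-dimensional step: I need a positive $C^{k}$ (or $C^{k+1}$, see below) function $\beta$ on $\mathbb{R}^n$ with $\beta(x_i = \epsilon) = g(x_i = \epsilon)$ for $\epsilon \in \{-1,0,1\}$, with $\int_{-1}^{0}\beta(x_i = t)\,dt = \int_0^1 \beta(x_i = t)\,dt = 1$, and with $\beta(x_\ell = \epsilon) = 1$ for all $\ell$; the hypothesis that $f$, hence $g$, is $1$ on all codimension-two hyperfaces is exactly what makes these prescribed boundary values mutually compatible. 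Then $u_i(x) = -1 + \int_{-1}^{x_i}\beta(x_i = t)\,dt$ fixes $\partial'B$ (each slab $x_i = \epsilon$ is mapped identically since $\int_{-1}^{\epsilon}\beta = \epsilon + 1$, and each slab $x_\ell = \epsilon$, $\ell \ne i$, is fixed since there $\beta \equiv 1$), and $\partial u_i/\partial x_i = g$ on the slabs $\{x_\ell = \epsilon\}$, $\ell \le i$, which advances the induction.

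The construction of $\beta$ is a grid-version of the appendix lemma \thref{Potato}. I would do it the same way: start from a one-dimensional partition-of-unity / bump construction in the $x_i$ variable that interpolates the three prescribed slices $g(x_i = -1), g(x_i = 0), g(x_i = 1)$ while having the prescribed integrals $1$ on $[-1,0]$ and on $[0,1]$ (possible because $g > 0$ and the integral constraints can always be met by scaling each half separately — note we are only asked to match the values on the slabs, not to preserve any global integral of $f$), and then multiply by cutoff factors in the remaining variables $x_\ell$ to force $\beta(x_\ell = \epsilon) = 1$; the compatibility at the corners where several constraints meet is guaranteed by the codimension-two hypothesis. One extra derivative appears here because integrating $\beta$ to get $u_i$ gains a derivative over $\beta$ when differentiated in the transverse directions — this is the reason \thref{Grid} lands in $C^{k+1}$ rather than $C^k$; I'd track this carefully, mirroring the remark after formula (2) in the proof of \thref{Circle}.

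For the final sentence, if $f - 1 \equiv 0$ near $\partial B$: I would choose the cutoffs in the construction of each $\beta$ so that $\beta \equiv 1$ near $\partial B$ whenever the relevant slice data is $\equiv 1$ there; since $g$ inherits the property of being $\equiv 1$ near $\partial B$ from $f$ through each step (the $v_j$ are the identity near $\partial B$, so their Jacobians are $1$ there), each $u_i$ is the identity near $\partial B$, hence so is $v$. The main obstacle I anticipate is bookkeeping rather than conceptual: verifying that after applying $v_1, \dots, v_{i-1}$ the function $g$ really is $\equiv 1$ on \emph{all} the lower-index slabs simultaneously — in particular on their mutual intersections, i.e. the codimension-two (and higher) strata — so that the prescribed boundary values for the next $\beta$ are consistent. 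This is exactly where the codimension-two hypothesis on $f$ is used, and where a careless induction would break; I would state the inductive hypothesis precisely as ``$g \equiv 1$ on $\{x_\ell = \epsilon\}$ for every $\ell < i$ and every $\epsilon \in \{-1,0,1\}$, and $v_j = Id$ on $\partial'B$ for $j < i$'' and check the step against it. The remaining verifications — that each $v_i$ is a diffeomorphism (its $\theta$-analogue: $\partial u_i/\partial x_i = \beta > 0$ and $u_i$ is a proper increasing function of $x_i$), that $\det(dv) = \prod \partial u_i/\partial x_i = f$ on $\partial'B$, and the regularity count — are routine and parallel to \thref{Separation} and \thref{Hypercube}.
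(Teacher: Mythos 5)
Your proof takes essentially the same route as the paper's: the paper's entire proof of \thref{Grid} is ``same as \thref{Hypercube}, but use \thref{Canonical2} instead of \thref{Potato},'' and the construction you describe — an iterated one-dimensional shear in each coordinate using a $\beta$ with three prescribed slice values at $x_i \in \{-1,0,1\}$ and two unit-integral constraints on $[-1,0]$ and $[0,1]$ — is exactly \thref{Canonical2}. One caution: your explanation of the $C^{k+1}$ regularity is not correct. Integrating $\beta$ in $x_i$ gains a derivative only in the $x_i$ direction, not transversally, so $u_i$ is $C^k$ (not $C^{k+1}$) when $\beta$ is $C^k$; the $C^{k+1}$ in the statement of \thref{Grid} is inconsistent with \thref{Hypercube} and with \thref{Double} (which reverts to $\mathrm{Diff}^k$), and is most plausibly a typo for $C^k$ — you should not have invented an argument to justify it.
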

\begin{proof}
This has essentially the same proof as \thref{Hypercube}. The difference is we use \thref{Canonical2} instead of \thref{Potato}.
\end{proof}
Since \thref{Grid} only depends on its input function on $\partial' B$ and the conditions of the function lie on the codimension two faces of the boundary, the above lemma is true if we just restrict to a closed subset of $\partial'B$ that is the boundary of an open set:
\begin{corollary}\thlabel{Double}
Let $Q = [-1, 1] \times [0, 1]^{n - 1}$ and $f \in C^k(\mathbb{R}^n; \mathbb{R}^+)$ function defined on $B$ which is equal to $1$ on all the codimension two hyperfaces of each of the side length one hypercubes in $B$ with vertices on integer coordinates, i.e. the set of all $(x_1, \dots, \epsilon_i, x_{i + 1}, \dots, x_{j - 1}, \epsilon_j, \dots x_n)$ for $\epsilon_i, \epsilon_j \in \{0, 1\}$ if $i \neq 1$ and if $i = 1$, $\epsilon_i \in \{-1, 0, +1\}$. Let 
$$\partial'Q = \partial Q \cup \{0\} \times [0, 1]^{n - 1}.$$
Then there exists $v\in \text{Diff}^k(\mathbb{R}^n)$ such that $\det(dv)f \circ v$ is equal to $1$ on $\partial' Q$ and $v|_{\partial' Q} = Id$. Furthermore, if $f \equiv 1$ on a neighborhood of $\partial Q$, then we can construct $v$ such that $v \equiv Id$ on a neighborhood of $\partial Q$.
\end{corollary}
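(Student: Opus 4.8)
The plan is to rerun the separation-of-variables construction of \thref{Grid} (equivalently \thref{Hypercube}) with the cube $B = [-1,1]^n$ replaced by the slab $Q = [-1,1]\times[0,1]^{n-1}$; no genuinely new idea is needed, only a check that the hypotheses transfer. First I would record the two structural observations that make this work. On the one hand, $\partial'Q$ is a closed subset of $\partial'B$: the faces $\{x_1 = \pm 1\}\cap Q$ and $\{x_j = 1\}\cap Q$ for $j \ge 2$ lie in $\partial B$, while the faces $\{x_j = 0\}\cap Q$ for $j \ge 2$ and the internal wall $\{0\}\times[0,1]^{n-1}$ lie on the middle hyperplanes $\{x_j = 0\}$ whose intersections with $B$ make up $\partial'B \setminus \partial B$; concretely, $Q$ is the union of the two unit subcubes $[-1,0]\times[0,1]^{n-1}$ and $[0,1]\times[0,1]^{n-1}$ of $B$, glued along that wall. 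On the other hand, the codimension-two normalization imposed on $f$ over $Q$ is exactly the sub-collection of the conditions in \thref{Grid} obtained by forcing $\epsilon_i \in \{0,1\}$ whenever $i \neq 1$, so every face on which the construction will want a factor map to be the identity already carries $f \equiv 1$.

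I would then write $v = v_1 \circ v_2 \circ \cdots \circ v_n$ with $v_i = (x_1, \dots, x_{i-1}, u_i(x), x_{i+1}, \dots, x_n)$ and proceed by induction on $i$ exactly as in \thref{Hypercube}. Having built $v_1, \dots, v_{i-1}$, set $g = \det(dv_1)^{-1}\cdots\det(dv_{i-1})^{-1} f$; the inductive hypothesis gives $g \equiv 1$ on the slab $(\epsilon_1, \dots, \epsilon_{i-1}, x_i, \dots, x_n)$ and that $v_j$ for $j < i$ fixes $\partial Q$ and the wall. I then need a positive $C^k$ function $\beta$ on $\mathbb{R}^n$ with $\beta(x_i = \epsilon) = g(x_i = \epsilon)$, with $\beta(x_j = \epsilon) = 1$ for $j \neq i$, and with its $x_i$-integral normalized so that $u_i(x) = \int_0^{x_i} \beta(x_i = t)\,dt$ preserves $\partial Q$ and, when $i = 1$, the wall. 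For $i \ge 2$ the $x_i$-interval is $[0,1]$ and the only integral constraint is $\int_0^1 \beta(x_i = t)\,dt = 1$, so \thref{Potato} supplies such a $\beta$ just as in \thref{Hypercube}; for $i = 1$ the $x_1$-interval $[-1,1]$ is subdivided at $0$, so I need both $\int_{-1}^0 \beta(x_1 = t)\,dt = 1$ and $\int_0^1 \beta(x_1 = t)\,dt = 1$ (which forces $u_1(x_1 = -1) = -1$, $u_1(x_1 = 0) = 0$, $u_1(x_1 = 1) = 1$), and this two-sided normalization together with the prescribed opposite-face values is precisely what \thref{Canonical2} delivers --- which is why that lemma replaces \thref{Potato} in the $x_1$-step. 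For the ``furthermore'' clause, if $f - 1 \equiv 0$ near $\partial Q$ then $g - 1 \equiv 0$ near $\partial Q$ at every stage, and I would additionally require \thref{Canonical2} and \thref{Potato} to return $\beta \equiv 1$ near the relevant faces, which makes each $u_i$ equal to $x_i$ there and hence $v \equiv Id$ near $\partial Q$.

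It then remains to assemble the pieces: since every $v_i$ fixes $\partial Q$ and the wall pointwise, $\det(dv) = \prod_i \det(dv_i)$ holds along $\partial'Q$, the telescoping choice of densities gives $\det(dv)\, f \circ v \equiv 1$ on $\partial'Q$ and $v|_{\partial'Q} = Id$, and $v \in \text{Diff}^k(\mathbb{R}^n)$ by the same triangular-Jacobian reasoning as in \thref{Grid} and \thref{Hypercube}. The only point that requires care --- and which I expect to be the main, if mild, obstacle --- is the bookkeeping that the face-value and integral constraints on $\beta$ stay mutually consistent at the lower-dimensional strata after each step (for instance that $\beta(x_i = \epsilon)$ and $\beta(x_j = \epsilon)$ agree where the corresponding faces meet, and that the prescribed value on $\{x_i = \epsilon\}$ is compatible with $\int \beta\, dx_i = 1$); this is exactly what the codimension-two normalization of $f$ and the inductive hypothesis on $g$ guarantee, so the verification is routine once set up correctly. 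In short, the substance is already contained in \thref{Grid} and the appendix lemmas \thref{Potato} and \thref{Canonical2}, and the corollary is the remark that their proofs localize to a closed piece of $\partial'B$ bounding an open set.
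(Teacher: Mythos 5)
Your proposal is correct and takes the same approach as the paper, which derives \thref{Double} from \thref{Grid} by observing that the separation-of-variables construction is local to $\partial'Q \subset \partial'B$ and uses \thref{Canonical2} only in the $x_1$-step and \thref{Potato} thereafter. You simply spell out the induction that the paper compresses into one sentence, including the compatibility checks at codimension-two strata and the ``furthermore'' clause, all of which match the paper's intended argument.
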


\begin{lemma}\thlabel{mose2}
Let $h, g \in C^k(\mathbb{R}^n; \mathbb{R}^+)$, $Q = [-1, 1] \times [0, 1]^{n - 1}$ and
$$\partial'Q = \partial Q \cup \{0\} \times [0, 1]^{n - 1}.$$
Suppose $h - g = 0$ is on $\partial Q$, $\int_Q h dx = \int_Q gdx $, and $\int_{[0, 1]^n} h dx = \int_{[0, 1]^n} g dx$. Then there exists a $C^k$ diffeomorphism $u: \mathbb{R}^n \to \mathbb{R}^n$ such that $g(u(x))\det(du) = h(x)$. Furthermore, if $h - g \equiv 0$ on a neighborhood of $\partial Q$, then we can construct $u$ such that $u \equiv Id$ on a neighborhood of $\partial Q$.
\end{lemma}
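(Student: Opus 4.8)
The plan is to adapt the proof of \thref{mose} to the new domain $Q = [-1,1]\times[0,1]^{n-1}$, which is the union of two adjacent unit cubes $Q_- = [-1,0]\times[0,1]^{n-1}$ and $Q_+ = [0,1]\times[0,1]^{n-1}$ glued along the face $\{0\}\times[0,1]^{n-1}$. As in \thref{mose}, I would first split $u = w^{-1}\circ v$ and seek $v,w$ with upper-triangular derivative such that $\det(dv) = h$ and $\det(dw) = g$, with both $v$ and $w$ equal to the identity on $\partial Q$, so that then $g(u(x))\det(du) = h(x)$ and $u|_{\partial Q} = Id$. Writing $v = (v_1,\dots,v_n)$, I want to factor $h = \prod_{i=1}^n h_i$ with $h_i = h_i(x_1,\dots,x_i)$ and $\int_0^\epsilon h_1(x_1=t)\,dt = \epsilon$ for $\epsilon\in\{-1,0,1\}$ (the first variable ranges over $[-1,1]$), and $\int_0^\epsilon h_i(x_1,\dots,x_{i-1},t)\,dt = \epsilon$ for $\epsilon\in\{0,1\}$ when $i\ge 2$. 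Then $v_i(x) = \int_0^{x_i} h_i(x_1,\dots,x_{i-1},t)\,dt$ gives the desired map, and likewise for $w$ from the factorization of $g$.

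The new feature is the extra normalization at $x_1 = 0$. Peeling off variables from $i=n$ down to $i=2$ exactly as in \thref{mose} (each $h_i$ is obtained by dividing by the integral over $t\in[0,1]$ of the running quotient), nothing changes for $i\ge 2$ since those variables still range over $[0,1]$. At the last step $i=1$ I am left with a function $H(x_1)$ (after suppressing the other variables, which are now parameters) and I need $h_1 = H$ with $\int_0^{-1}H = -1$ and $\int_0^{1}H = 1$, i.e. $\int_{-1}^0 H = \int_0^1 H = 1$. This is where the two hypotheses $\int_Q h\,dx = \int_Q g\,dx$ and $\int_{[0,1]^n} h\,dx = \int_{[0,1]^n} g\,dx$ enter: together they also give $\int_{[-1,0]\times[0,1]^{n-1}} h\,dx = \int_{[-1,0]\times[0,1]^{n-1}} g\,dx$, so after integrating out $x_2,\dots,x_n$ the resulting one-variable functions coming from $h$ and from $g$ have equal integral over $[-1,0]$ and equal integral over $[0,1]$ separately. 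Since $v$ and $w$ are determined by taking the same running-quotient recipe, this forces $v_1 = w_1$ on $x_1 = 0$ (both equal to the constant value $0$ there, by the normalization) and $v = w$ on the hyperplanes $x_i = 0,1$ for $i\ge 2$ where $h = g$, so $v = w$ on all of $\partial Q$; the argument is the same induction on $n$ as before, with base case $n=1$ handled by the two integral hypotheses directly.

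For the localized statement, if $h - g \equiv 0$ on a neighborhood of $\partial Q$, I would carry the construction out so that the factors $h_i, g_i$ agree near the relevant faces; since the running quotients differ from $g$'s only where $h\ne g$, and the antiderivatives differ only where the integrands differ, $v = w$ in a neighborhood of $\partial Q$ and hence $u = w^{-1}\circ v \equiv Id$ there. The main obstacle I anticipate is purely bookkeeping: verifying that the "separation of variables'' factorization can be arranged to keep the factors positive and $C^k$ while simultaneously respecting both the $x_1\in[-1,1]$ normalization at $0$ and the localization near $\partial Q$; this is exactly the kind of statement packaged by the appendix lemma (the analogue of \thref{Potato}/\thref{Canonical2}) invoked for \thref{Grid}, so I would reduce the positivity-and-regularity part to that lemma rather than reprove it. Everything else is a direct transcription of Moser's Lemma 2.
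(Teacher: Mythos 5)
Your proposal is correct and follows essentially the same approach as the paper, which simply invokes the proof of \thref{mose} and notes that only the base case $n=1$ changes: there one takes $v(x) = \int_0^x h(t)\,dt$, and the two integral hypotheses (which, as you observe, also force $\int_{[-1,0]\times[0,1]^{n-1}}h = \int_{[-1,0]\times[0,1]^{n-1}}g$) guarantee that $v$ and $w$ agree at $x_1 = -1, 0, 1$, so $u = w^{-1}\circ v$ fixes $\partial'Q$. One small correction to your anticipated obstacle: the factors $h_i$ in the \thref{mose}-style separation of variables are given explicitly as running quotients divided by their one-variable integrals, so positivity and $C^k$ regularity are automatic from $h>0$ and $h\in C^k$; there is no need to invoke \thref{Potato} or \thref{Canonical2} here. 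Those appendix lemmas serve the boundary-matching constructions of \thref{Separation}, \thref{Hypercube}, and \thref{Grid}, where one must manufacture a factorization from boundary data alone without the exact integral hypotheses that make \thref{mose} and \thref{mose2} rigid.
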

\begin{proof}
This is essentially the same proof as \thref{mose}. The difference is the base case of $n = 1$. In that case, we simply take $v(x) = \int_0^x h(t) dt$. 

%First, we exhibit $w$ and $v$ such that $u = w^{-1} \circ v$ and $\det(dv) = h$, $\det(dw) = g$. We find $v$ and $w$ with $dv$ and $dw$ upper triangular. Letting %$v = (v_1, \dots, v_n)$, we have
%$$\text{det}(v) = \prod_{i = 1}^n \frac{\partial v_i}{\partial x_i} = h$$
%and $v_i(x_1, \dots, x_{i - 1}, \epsilon) = \epsilon$ for $\epsilon = 0, 1$ for all $1 \le i \le n$.
%$$h = \prod_{i = 1}^n h_i$$
%$$\int_0^\epsilon h_i(x_1, \dots, x_{i - 1}, t) dt = \epsilon$$
%where $\epsilon = 0, 1$. Therefore, if we let
%$$v_i(x) = \int_0^{x_i} h_i(x_1, \dots, x_{i - 1}, t) dt$$
%we find $v_i$ that has the desired property. To find $h_i$ that satisfies this, we use induction. For $i = n$, we take
%$$\frac{h(x)}{h_n(x)} = \int_0^1 h(x_1, \dots, x_{n - 1}, t) dt$$
%so the $h_i$ exists and thus $v$ exists. We can repeat this process to construct $w$ fof $g$. It then remains to show that $v$ and $w$ agree on the boundary of the cube. We prove this by induction on $n$. It is certainly clear that this holds for $x_i = 0, 1$ for $i \neq n$. This is because at $x_i = 0, 1$, $h = g$. At $x_n = 0, 1$, $v_n = w_n = 0,1$, respectively. Since $h/h_n = g/g_n$ on $x_i = 0, 1$ for $i \neq n$, the induction step holds. Similarly, if $g = h$ on a neighborhood of $\partial B$, we can see that $v_n \equiv w_n$ near $\partial B$. In both cases, we may proceed to the next step. For the base case of the induction argument of $n = 1$, the function $v_1(x_1) = \int_0^{x_1} h_1(t) dt$ satisfies the necessary conditions.
\end{proof}

In order to reduce to the case of a double hypercube as in \thref{Double} and \thref{mose2}, we prove a result similar to Lemma 1 in \cite{M}.
\begin{lemma}\thlabel{Lemma1}
Let $U$ be a bounded open set in $\mathbb{R}^n$, $B$ an open set with connected boundary and whose closure is in $U$ and $U_i$ a finite cover of $U$ such that each $U_i$ intersects the boundary of $B$. Let $\tau \in C^k(\mathbb{R}^n)$, $g \in C^k_c(U)$ such that 
$$\int_B g\tau dx = \int_U g\tau dx = 0.$$ 
%Let $(U_i, \varphi_i)_{i = 0}^m$ be a partition of unity of $B$. 
Then there exists $(g_i)_{i = 0}^m$ such that $g = \sum_{i = 0}^m g_i$ such that $\int_U g_i\tau = \int_B g_i\tau = 0$ and $g_i \in C^k_c(U_i)$. Furthermore, there exists some constant $c$ such that $\|g_i\|_{C^0} \le c\|g\|_{C^0}$ and
$$c \le 1000 m^2\frac{\|\tau\|}{\min_{x \in U} |\tau(x)|}q$$
where 
$$q = \frac{\mu(U)}{\min_{U_i \cap U_j \neq \emptyset}\mu(U_i \cap U_j)} + \frac{\mu(B)}{\min_{B \cap U_i \cap U_j \neq \emptyset} \mu(B \cap U_i \cap U_j)}$$
\end{lemma}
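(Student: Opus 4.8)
The plan is to follow the strategy of Lemma 1 in \cite{M}: iteratively ``push'' the mass of $g$ from one chart of the cover into the next along a chain connecting the $U_i$, using the connectedness of $\partial B$ to guarantee such a chain exists and that it can be threaded so that every transfer happens inside $B$ as well as inside $U$. Concretely, I would first fix, for each pair $i,j$ with $U_i \cap U_j \neq \emptyset$, a bump function $\eta_{ij} \in C^k_c(U_i \cap U_j)$ with $\int_U \eta_{ij}\tau \neq 0$ and, crucially (this is where the hypothesis on $B$ enters), also $\int_B \eta_{ij}\tau \neq 0$; the latter can be arranged because $\partial B$ meets the interior of $U_i \cap U_j$, so one can place the support of $\eta_{ij}$ in a small ball straddling $\partial B$ and then scale so both integrals are comparable to $\mu(U_i\cap U_j)$ and $\mu(B\cap U_i\cap U_j)$ respectively. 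These $\eta_{ij}$ are the ``conduits'' used to correct the mass defect created each time we localize a piece of $g$.

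Next I would take a partition of unity $(\chi_i)$ subordinate to $(U_i)$ and set $g^{(0)} = g$, then inductively define $g_i = \chi_i g^{(i)} + (\text{a linear combination of the } \eta_{i\cdot}) $, choosing the coefficients so that $\int_U g_i\tau = \int_B g_i\tau = 0$ (two linear conditions, and since $\partial B$ meeting $U_i\cap U_j$ gives two \emph{independent} conduits, or one conduit with two independent integrals, the $2\times2$ system is solvable), and then set $g^{(i+1)} = g^{(i)} - g_i$, which by construction still satisfies $\int_U g^{(i+1)}\tau = \int_B g^{(i+1)}\tau = 0$ and is supported in $U_{i+1} \cup \dots \cup U_m$. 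After $m$ steps the remainder $g^{(m+1)}$ is supported in $U_m$ alone and automatically has both integrals zero, so it is absorbed into $g_m$. The $C^0$ bound is then obtained by tracking how the coefficients grow: each coefficient is a ratio of an integral of $g^{(i)}$ (bounded by $\|g^{(i)}\|_{C^0}\mu(U)$ or $\mu(B)$) over $\int \eta_{ij}\tau$ (bounded below by $\min |\tau|$ times $\mu(U_i\cap U_j)$ or $\mu(B\cap U_i\cap U_j)$), which is where the factor $q$ and the ratio $\|\tau\|/\min|\tau|$ come from; the $m^2$ comes from the fact that $\|g^{(i+1)}\|_{C^0}$ can grow by a factor proportional to $m$ at each of the $m$ stages, and the universal constant $1000$ just swallows the partition-of-unity overhead and the $2\times2$ Cramer's-rule constants.

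The main obstacle I expect is bookkeeping the simultaneous two constraints $\int_U g_i\tau = \int_B g_i\tau = 0$ while keeping the estimates clean: one must verify that the conduits $\eta_{ij}$ can really be chosen to make the relevant $2\times 2$ determinant bounded below in terms of $\mu(B\cap U_i\cap U_j)$ and $\mu(U_i\cap U_j)$ — this is precisely the point at which connectedness of $\partial B$ (so that the chain of charts stays ``anchored'' to $\partial B$ throughout, rather than drifting into a component of $B$ that later charts cannot see) is indispensable. Once that geometric input is in place, the rest is the same telescoping estimate as in \cite{M}, and the explicit constant is just a matter of being generous. I would also remark that the regularity claim $g_i \in C^k_c(U_i)$ is immediate since each $g_i$ is a finite sum of $C^k$ functions ($\chi_i g^{(i)}$, which is $C^k$ because $g$ is and the construction only ever multiplies by smooth cutoffs and subtracts, and the smooth $\eta_{ij}$) with support in $U_i$.
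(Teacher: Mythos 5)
Your proposal captures the same underlying idea as the paper: decompose $g$ via a partition of unity subordinate to the cover, then add correction (``conduit'') terms supported near $\partial B$ in the pairwise intersections $U_i \cap U_j$, exploiting the chain-connectivity of the $U_i$ along $\partial B$ so that each such intersection straddles the boundary of $B$ and can therefore absorb both an $\int_U(\cdot)\tau$ and an $\int_B(\cdot)\tau$ defect. That much matches the paper's proof, which follows Moser's Lemma~1 closely by writing $g_j = g\gamma_j - \sum_k \eta_k \alpha_{jk}$, with $\alpha$ the incidence matrix of a spanning tree (each column one $+1$ and one $-1$), and choosing $\eta_k$ supported in $U_k \cap U_{\rho(k)}$ with $\rho(k) < k$ determined by the ordering along $\partial B$. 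One point worth tightening: ``one conduit with two independent integrals'' should be read as ``one function $\eta_k$ whose pair $\bigl(\int_U \eta_k\tau,\ \int_B \eta_k\tau\bigr)$ can be prescribed arbitrarily''; a single scalar coefficient times a fixed bump cannot satisfy two independent constraints, so the two degrees of freedom really are essential (e.g.\ a bump inside $B \cap U_k \cap U_{\rho(k)}$ plus one in $(U\setminus \overline{B})\cap U_k\cap U_{\rho(k)}$).

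The genuine gap is in the constant. Your iterative scheme defines $g^{(i+1)} = g^{(i)} - g_i$ and bounds the step-$i$ correction coefficient by an integral of $g^{(i)}$, not of $g$ itself. You then argue ``$\|g^{(i+1)}\|_{C^0}$ can grow by a factor proportional to $m$ at each of the $m$ stages,'' but multiplicative growth by $O(m)$ over $m$ stages gives $O(m^m)$, not $O(m^2)$; and even with a more careful multiplicative estimate $\|g^{(i+1)}\| \leq (1 + Cq\|\tau\|/\min|\tau|)\|g^{(i)}\|$ you land at an exponential bound $(1 + Cq\ldots)^m$, which does not match the claimed $1000\,m^2$. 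The paper sidesteps this entirely: it solves for \emph{all} the quantities $\lambda_k = \int_U \eta_k\tau$ and $\lambda_k' = \int_B \eta_k\tau$ at once from the linear system $\sum_k \lambda_k \alpha_{jk} = \int_U g\gamma_j\tau$ (and its $B$-analogue), whose right-hand side depends only on the original $g$, never on earlier corrections. After discarding the redundant $j=0$ equation (which follows from $\sum_j \alpha_{jk}=0$ together with $\int_U g\tau = \int_B g\tau = 0$), the matrix is $I + N$ with $N$ nilpotent of order $\leq m$ and entries in $\{0,-1\}$, so $\|\alpha^{-1}\| \leq 2m$ by the Neumann series, and the bound $c \lesssim m^2 q\,\|\tau\|/\min|\tau|$ drops out. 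If you want to keep the iterative presentation you must reorganize it as back-substitution on this triangular system, so that each $\lambda_k$ is estimated against $\|g\|$ rather than $\|g^{(k)}\|$; otherwise the claimed quadratic-in-$m$ constant is not justified.
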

\begin{proof}
Our proof closely follows the proof of \cite[Lemma 1]{M}. We will be using similar notation as \cite[Lemma 1]{M}.\\\\
Take functions $\gamma_i \in C_c^\infty(U_i; [0, \infty))$ such that $\sum_{i =0}^m \gamma_i = 1$ on a neighborhood of $\text{supp}(g)$. First, we order $(U_i)_{i = 0}^m$ such that $U_k \cap \partial B$ intersects $\bigcup_{j = 0}^{k - 1} U_i$. Pick an integer $\rho(k) < k$ such that $U_k \cap U_{\rho(k)} \cap \partial B$ is nonempty. We introduce the matrix $(\alpha_{jk})_{j = 0, k = 1}^{m}$
\[\alpha_{jk} = \begin{cases} 
1 & \text{ if }j = k \\
-1 & \text{   if }j = \rho(k) \\
0 & \text{   otherwise}
\end{cases}\]
Since each column contains exactly one $+1$ and $-1$, we have
\begin{equation}
\sum_{j =0}^m \alpha_{jk} = 0 \tag{1}
\end{equation}
for all $k$. Let $\eta_k \in C^\infty_c(U_k \cap U_{\rho(k)})$ be chosen later. We define
\begin{equation}
 g_j = g \gamma_j - \sum_{k = 1}^m \eta_k \alpha_{jk}. \tag{2}   
\end{equation}
Let $\lambda_k = \int_U \eta_k\tau dx$ and $\lambda_k' = \int_B \eta_k\tau dx$. Since the support of $\eta_k$ intersects both $B$ and $U \setminus B$, $\lambda_k$ and $\lambda_k'$ may be any real number. Integrating (2) against $\tau$, we end up with $2(m + 1)$ equations with $2m$ unknowns in $\lambda_k$ and $\lambda_k'$:
$$\sum_{k = 1}^m \lambda_k \alpha_{jk} = \int_U g \gamma_j\tau dx, \hspace{0.1in} \sum_{k = 1}^m \lambda'_k \alpha_{jk} = \int_B g \gamma_j\tau dx.$$
%Unless specified, we refer to \cite[Lemma 1]{M} for all our notation. We shall refer to the proof of Lemma 1 in \cite{M} for the details of the proof and list here the modifications to the proof. Among them is a reordering of $U_i$ and redefinition of $\rho$. These will be done slightly later in the argument. First, we do not define $\lambda_k$ as they do and instead opt to not impose the condition that $\int \eta_k \tau = 1$. Our $\eta_k$'s still have the same support as the $\eta_k$ that \cite{M} uses. Thus, our general construction of $g_j$ appears as
%$$g_j = g \varphi_j - \sum_{k = 1}^m \eta_k \alpha_{jk}.$$
%Let $\lambda_k = \int_U \eta_k\tau$ and $\lambda_k' = \int_B \eta_k\tau$. We then end up with $2(m + 1)$ equations with $2m$ unknowns:
%$$\sum_{k = 1}^m \lambda_k \alpha_{jk} = \int_U g \varphi_j\tau dx$$
%$$\sum_{k = 1}^m \lambda'_k \alpha_{jk} = \int_B g \varphi_j\tau dx.$$
However, by (1) and 
$$\int_B g\tau dx = \sum_{j = 0}^m \int_B g \gamma_j\tau dx = 0, \hspace{0.1in} \int_U g dx = \sum_{j = 0}^m \int_U g \gamma_j\tau dx = 0$$
it follows that the $j = 0$ equation is negative the sum of all the other equations, so is redundant. Since $(\alpha_{jk})$ is an upper triangular matrix with $1$'s on the diagonal, it is rank $m$, so we can uniquely solve for all the $\lambda_k$ and $\lambda'_k$. \\\\
%It remains to show that the $\eta_k$ can be chosen to satisfy
%$$\lambda_k = \int_U \eta_k\tau, \lambda_k' = \int_B \eta_k\tau.$$
%This is possible as long as $U_k \cap U_{\rho(k)}$ intersects the boundary of $B$ or if $\lambda_k' = 0$. This is where we reorder $U_k$ and choose $\rho$ such that $U_k \cap U_{\rho(k)}$ always intersects the boundary of $B$. To do this, we inherit the open sets on $U$ to open sets on the boundary of $B$. Then we do the labelling and the choosing of $\rho$ as done in the proof of Lemma 1 in \cite{M} except with the open sets on $\partial B$. We then lift the labelling and the choice of $\rho$ to the open sets covering $U$. This would ensure that $U_k \cap U_{\rho(k)}$ intersects the boundary of $B$. 
To show that $\|g_i\|_{C^0} \le c\|g\|_{C^0}$ for some constant $c$, we notice that 
$$|\lambda_k| \le \| \alpha^{-1} \| \|g\| \mu(U) \| \tau\|, \hspace{0.1in} |\lambda_k'| \le \|\alpha^{-1}\| \|g\| \mu(B)$$
so we can choose $\eta_k$ such that 
$$\| \eta_k \| \le 500\|g\|\|\alpha^{-1}\|\frac{\|\tau\|}{\min_{x \in U} |\tau(x)|} q.$$
We thus have the bound
$$c \le 1000 m\|\alpha^{-1}\|\frac{\|\tau\|}{\min_{x \in U} |\tau(x)|}q$$
and since $\alpha = I + N$ where $N$ is a matrix with $N^m = 0$, taking a Neumann series for $(I + N)^{-1}$, we see that $\|\alpha\|^{-1} \le 2m$. We get the desired inequality from this. 
\end{proof}
\begin{lemma}\thlabel{doublesquare}
Let $f, g > 0$ be $C^k$ smooth functions on $\mathbb{R}^{n}$, and $Q = [-1, 1] \times [0, 1]^{n - 1}$ with $f \equiv g$ on $\partial Q$. Let $\partial'Q = \partial Q \cup \{0\} \times [0, 1]^{n -1}$. Suppose
$$\frac{\int_{Q} f dx }{\int_{[0, 1]^n} g dx} = \frac{\int_Q f dx}{\int_{[0, 1]^n} g dx} = \lambda.$$
Then there exists $u \in \text{Diff}^k(\mathbb{R}^n)$ such that $u|_{\partial'Q} = Id$ and $\det(du)f(u) = \lambda g$.
\end{lemma}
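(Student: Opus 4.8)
The plan is to imitate the proof of \thref{Singularity}, with \thref{Double} playing the role that \thref{update} played there and \thref{mose2} the role of \thref{mose}. Write $\sigma := \lambda g$ for the target density, and split $Q = Q_-\cup Q_+$ into its two integer-vertex unit cubes $Q_+ = [0,1]^n$ and $Q_- = [-1,0]\times[0,1]^{n-1}$, meeting along the interior wall $W := \{0\}\times[0,1]^{n-1}$, so that $\partial'Q = \partial Q\cup W$. The hypotheses provide: $f\equiv\sigma$ on $\partial Q$, and $f$ and $\sigma$ have equal mass over $Q_+$, over $Q_-$, and hence over $Q$. What must be produced is $u\in\mathrm{Diff}^k(\mathbb{R}^n)$ with $u|_{\partial'Q}=\mathrm{Id}$ and $u^*(f\,dx)=\sigma\,dx$, equivalently $\det(du)\,(f\circ u)=\lambda g$.

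The first step is to normalize the density along $\partial'Q$ using \thref{Double}. Let $\rho := f/\sigma$, a positive $C^k$ function equal to $1$ on $\partial Q$. Every codimension-two face of $Q_+$ or $Q_-$ is obtained by freezing two coordinates to integer values, and at least one of these constraints already forces the point onto $\partial Q$ — in particular any such face lying in the wall $\{x_1=0\}$ also freezes some $x_j$, $j\ge 2$, to $0$ or $1$ — so $\rho\equiv 1$ there and $\rho$ satisfies the hypothesis of \thref{Double}. We obtain $v\in\mathrm{Diff}^k(\mathbb{R}^n)$ with $v|_{\partial'Q}=\mathrm{Id}$ and $\det(dv)\,(\rho\circ v)\equiv 1$ on $\partial'Q$; since $v=\mathrm{Id}$ there this forces $\det(dv)=1/\rho=\sigma/f$ on $\partial'Q$, so the transported density $f_1 := (f\circ v)\,\det(dv)$, defined by $v^*(f\,dx)=f_1\,dx$, satisfies $f_1\equiv\sigma$ on all of $\partial'Q$, the wall $W$ included. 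Also $\det(dv)>0$ on $\partial'Q$, hence everywhere (a diffeomorphism of connected $\mathbb{R}^n$ has Jacobian of constant sign), so $v$ fixes $\partial Q_{\pm}\subseteq\partial'Q$ and maps $Q_{\pm}$ onto $Q_{\pm}$; therefore $\int_{Q_{\pm}}f_1 = \int_{Q_{\pm}}f = \int_{Q_{\pm}}\sigma$, and similarly over $Q$.

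The second step is to apply \thref{mose2} with the densities $\sigma$ and $f_1$ (in the roles of $h$ and $g$ there): the three hypotheses — agreement on $\partial Q$, equal mass over $Q$, equal mass over $[0,1]^n$ — all hold by the first step. This produces $w\in\mathrm{Diff}^k(\mathbb{R}^n)$ with $f_1(w(x))\,\det(dw)(x)=\sigma(x)$, i.e.\ $w^*(f_1\,dx)=\sigma\,dx$, and with $w|_{\partial Q}=\mathrm{Id}$. The point I expect to be the main obstacle is that $w$ can moreover be taken to fix the wall $W$: in the construction underlying \thref{mose}/\thref{mose2} one writes $w=w_2^{-1}\circ w_1$ with $w_1,w_2$ built coordinate by coordinate from $\sigma$ and $f_1$, the first ($x_1$) coordinate obtained by integration based at $0$; restricting to $\{x_1=0\}$ reproduces exactly the $(n-1)$-dimensional such maps for $\sigma|_W$ and $f_1|_W$, which coincide by the first step, so $w_1|_W=w_2|_W$ and hence $w|_W=\mathrm{Id}$. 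Thus $w|_{\partial'Q}=\mathrm{Id}$. This is precisely why \thref{mose2} carries the two separate mass conditions rather than one, and why the first step is set up so that $f_1$ agrees with $\sigma$ on all of $\partial'Q$; making it rigorous requires re-entering the proof of \thref{mose2}.

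Finally, put $u := v\circ w$. Then $u|_{\partial'Q}=\mathrm{Id}$, and
\[
u^*(f\,dx)=w^*v^*(f\,dx)=w^*(f_1\,dx)=\sigma\,dx=\lambda g\,dx,
\]
which is the desired identity $\det(du)\,(f\circ u)=\lambda g$; and $u\in\mathrm{Diff}^k(\mathbb{R}^n)$ since $v$ and $w$ are. A minor secondary point is the bookkeeping in the first step, verifying that every codimension-two face of the two subcubes really lies on $\partial Q$ so that \thref{Double} genuinely applies.
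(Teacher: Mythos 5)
Your two-step plan — normalize along $\partial'Q$ with \thref{Double}, then correct the densities with \thref{mose2}, and compose — is exactly the paper's, so the overall approach matches. The one genuine discrepancy is in the first step, and it touches the constant $\lambda$. You apply \thref{Double} to $\rho := f/\sigma = f/(\lambda g)$ and assert that $\rho\equiv 1$ on $\partial Q$. But the hypothesis $f\equiv g$ on $\partial Q$ only gives $\rho\equiv 1/\lambda$ on $\partial Q$, so \thref{Double} does not apply to $\rho$ unless $\lambda=1$. The paper instead applies \thref{Double} to $f/g$, which genuinely is $1$ on the codimension-two faces; it then has to deal with $\lambda$ in the \thref{mose2} step, where it feeds in $\lambda^{-1}\det(dv)f(v)$ and $g$ — and there the hypothesis $h-g=0$ on $\partial Q$ likewise only holds when $\lambda=1$. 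So both your proof and the paper's really prove the lemma for $\lambda=1$; this is consistent with the fact that the displayed mass hypothesis in the statement is visibly a misprint (the two fractions are identical), and with the only invocation in the proof of \thref{SmoothBoundary}, which is the case $\lambda=1$ (the output is asserted to satisfy $\det(dv_i)\,p_2\circ v_i=p_1$ with no $\lambda$). If you wanted to genuinely handle $\lambda\neq 1$ by your route, you would need a mild generalization of \thref{Double} (and hence of \thref{Potato}) allowing the boundary data to be a constant other than $1$, so that $v$ can carry Jacobian $\lambda$ on $\partial Q$; as stated, \thref{Double} cannot produce that.

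Two smaller remarks. First, your observation that the $w$ coming from \thref{mose2} must fix not only $\partial Q$ but also the interior wall $W=\{0\}\times[0,1]^{n-1}$ — and that this is where the second mass condition and the equality $f_1\equiv\sigma$ on $W$ are consumed — is an accurate identification of a step the paper leaves implicit: the statement of \thref{mose2} only advertises the identity near $\partial Q$, yet $w|_W=\mathrm{Id}$ is needed for the conclusion $u|_{\partial'Q}=\mathrm{Id}$. Your sketch of why the coordinate-by-coordinate construction based at $x_1=0$ gives $v|_W=w|_W$ is the right argument. Second, your verification that $v$ preserves each of $Q_\pm$ (via the fixed boundary and positive Jacobian) so that the mass hypotheses transfer from $f$ to $f_1$ is correct and is the sort of bookkeeping the paper compresses into ``does not change the integral.''
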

\begin{proof}
Since $f/g = 1$ on the codimension two hyperfaces of $Q$, i.e. the set 
$$\{(x_1, \dots, \epsilon_i, x_{i + 1}, \dots, x_{j - 1}, \epsilon_j, \dots, x_n)$$ 
where $\epsilon_i, \epsilon_j \in \{0, 1\}$ if $i > 1$ and if $i = 1$, $\epsilon_i \in \{-1, 0, +1\}$. Since $\partial (\{0\} \times [0, 1]^{n - 1})$ is contained in $\partial Q$, we may apply \thref{Double} to $f/g$ to obtain some $v: \mathbb{R}^n \to \mathbb{R}^n$ such that $\det(dv)(f/g) \circ v \equiv 1$ on $\partial' Q$ and $v|_{\partial'Q} = Id$. Hence $\det(dv) f \circ v \equiv g$ on $\partial'Q$. Since $\det(dv)f(v)$ does not change the integral of $f$ over $Q$ or $[0, 1]^n$ (since is the identity on $\partial'Q$). Thus, we may apply \thref{mose2} to $\lambda^{-1}\det(dv)f(v)$ and $g$ to obtain $w$ such that $w^*v^*f = \lambda g$. We may then set $u = w \circ v$ and thus $u^*f = \lambda g$.
\end{proof}

\begin{proof}[Proof of \thref{SmoothBoundary}]
%First, applying the argument in page 2-3 of \cite{M} and \thref{doublesquare}, we may reduce this to the case of $g = 1$, $|f - 1| < \epsilon$, and $\int_U f = \int_U g$ (the dependence on $\tau$ .
First, we modify $f$ and $g$ outside of a neighborhood of $\overline{U}$ so that 
\begin{equation}
\int_{\mathbb{R}^n} f(x)^t g(x)^{1 - t}dx = \infty \tag{1}
\end{equation}
for $t \in [0, 1]$ and both $f$ and $g$ are bounded. This can be done by taking $g = (1 - \chi) + g \chi$ and $f = (1 - \chi) + f \chi$ for $\chi \in C^\infty_c(\mathbb{R}^n; \mathbb{R}^+)$ equal to $1$ on $U$ and $0$ outside a neighborhood of $\overline{U}$.\\\\
Next, let $B$ be a large open ball centered at the origin such that $\overline{U} \subset B$. Applying \thref{Dumb} to $B$ and $U$, we obtain an open cover $(U_i)_{i = 0}^m$ satisfying the hypotheses of \thref{Lemma1} and that $U_i$ is diffeomorphic to $(-1, 1) \times (0, 1)^{n - 1}$ with $U_i \cap \partial B$ sent to $\{0\} \times (0, 1)^{n - 1}$ under the diffeomorphism. Furthermore, $m$ is fixed and the ratio
$$p = \frac{\mu(U)}{\max_{U_i \cap U_j \neq \emptyset} \mu(U_i \cap U_j)}$$
is bounded above as the radius of $B$ increases. Let $C$ denote the constant as found in \thref{Lemma1} applied to $U_i$ and $\tau = (f/g)^t g$ for $t \in [0, 1]$. Then since $p$ is bounded above, and since $U$ and $U_i \cap U$ is invariant as the radius of $V$ increases (from the hypothesis of \thref{Dumb}), 
$$C \le 1000m^2 \frac{\max_{t \in [0, 1]} \|\tau\|}{\min_{x \in \mathbb{R}^n, t \in [0, 1]} |\tau|} < D$$
for some large $D$ that does not depend on the radius of $V$ as $m$ is fixed and $\| \tau\|, (\min |\tau|)^{-1} < \infty$ (since $f$ and $g$ are bounded). Choose $\epsilon$ such that $0 < \epsilon < \frac{1}{8}D^{-1}(m + 1)^{-1}$. \\\\
Similar to page $2$ of \cite{M}, we reduce to the case of $f \tau dx$ and $g \tau dx$ with $g = 1$, $|f - 1| < \epsilon$ and $\tau dx$ a volume form with $\epsilon$ the same as the $\epsilon$ constructed above. To see this, we introduce a family of volume forms $\omega_t = (f/g)^tg dx$. Let $\delta = (1 + \epsilon)/\max|\log(f/g)|$, we obtain for two $t$ and $t'$ such that $|t - t'| < \delta$, $\omega_t = h_t g dx$ and $\omega_{t'} = h_{t'} g dx$, then $|h_t/h_{t'} - 1| < \epsilon$. Thus, it suffices to find $v \in \text{Diff}^k(\mathbb{R}^n)$ such that $v^* \omega_t = \lambda \omega_{t'}$ (for some constant $\lambda$). By replacing $dx$ with $\tau dx = (f/g)^{t}g dx$, we assume that $g = 1$ and $|f - 1| < \epsilon$. \\\\
Let $\lambda = \frac{1}{\mu_\tau(U)}\int_U f \tau dx$. We will define $h \in C^k(\mathbb{R}^n; \mathbb{R}^+)$ equal to $f$ on a neighborhood of $\overline{U}$ and satisfying
$$\int_V h \tau dx = \int_V \tau dx$$
and $h \equiv \lambda$ near $\partial V$ where $V \supset \overline{U}$ is a large open ball to be chosen later. Take $\chi, \psi \in C^\infty_c(V; [0, 1])$ with disjoint support such that $\chi$ equal to $1$ on $U$ and $0$ everywhere outside a neighborhood of $\partial U$, and $\psi$ to be chosen later.
%$$\int \chi f d\mu_\tau = \ell, \int \chi d\mu_\tau = m.$$ 
We construct a function of the form:
\begin{equation}
h = \lambda((\chi \lambda^{-1}f + (1 - \chi))(1 - \psi) + p\psi) \tag{2}
\end{equation}
with $p \in \mathbb{R}^+$ also to be chosen later. Since the support of $\psi$ and $\chi$ is outside a neighborhood of the boundary, it follows that $h - \lambda \in C_c^k(V)$. If
$$\int \chi f \tau dx = \lambda m, \hspace{0.1in} \int \chi \tau dx = \ell, \hspace{0.1in} \int \psi \tau dx = k.$$
then integrating on $V$ both sides of (2) and setting it equal to $\lambda \int_V \tau dx$, we have
$$m + \int_V \tau dx - \ell - k + p k = \int_V \tau dx \implies p = (\ell + k - m)/k.$$
Since $\ell$, $m$ are fixed, making $V$ larger and the support of $\psi$ larger, we can make $k$ arbitrarily large (because of (1)) and so $p$ arbitrarily close to $1$. Note that
$$|h - \lambda| \le |f - \lambda| + |p\lambda - \lambda| \le |f - 1| + |1 - \lambda| + |p\lambda - \lambda| < 2\epsilon + |p\lambda - \lambda|$$
where the first inequality follows from $\min(\lambda, \min f, p\lambda) \le h \le \max(\lambda, \max f, p\lambda)$ and $f > 0$.
Choosing $V$ large enough, we may assume that $|p - 1| < \epsilon$, and since $1 - \epsilon < \lambda < 1 + \epsilon$, $|p\lambda - \lambda| < \epsilon + \epsilon^2 < 2\epsilon$. Hence, $|h - \lambda| < 4\epsilon$.  \\\\
%In fact, since $|f - 1| < \epsilon$, our set $V$ can be chosen so that for all $f$, there exists $\psi$ and $p$ such that $h = (\chi f + (1 - \chi))(1 - \psi) + p \psi$, $\int_V h = \mu_\tau(V)$. 
Applying \thref{Lemma1} with our covering $U_i$, $g = h - \lambda$, and $\tau dx$ our volume form, we obtain $g_i \in C_c(U_i, \mathbb{R})$ satisfying 
$$g_0 + g_1 + \cdots + g_m = h - \lambda, \hspace{0.1in} \|g_i\| \le C\|g\|, \hspace{0.1in} \int_{U_i} g_i \tau dx = 0$$ 
Let
$$f_0 = \lambda \hspace{0.1in} f_i = \lambda + \sum_{j = 1}^i g_i.$$
Since $D > C$ and $\epsilon$ was chosen such that 
$$4\epsilon < \frac{1}{2}(m + 1)^{-1}D^{-1} < (m + 1)^{-1}D^{-1} \lambda,$$ 
$f_i \in C^{k}(\mathbb{R}^n; \mathbb{R}^+).$\\\\
Since $f_i - f_{i - 1} \in C^k_c(U_i; \mathbb{R}^+)$, there exists open $V_i$ such that $\bar{V_i} \subset U_i$, $\text{supp}(f_i) \subset V_i$. We also demand that $V_i$ be mapped smoothly via some $\varphi_i$ to the region $Q := [-1, 1] \times [0, 1]^{n - 1}$ such that $\varphi(\partial U \cap V_i) = \{0\} \times (0, 1)^{n - 1}$. Since $f_i - f_{i - 1}$ has compact support we may view $p_1 := \det(d\varphi_i)f_i \circ \varphi_i$ and $p_2 := \det(d\varphi_i)f_{i - 1} \circ \varphi_i$ as being in $C^k(\mathbb{R}^n)$ with $p_1 - p_2 \in C^k_c(Q)$. We can apply \thref{doublesquare} to $p_1$ and $p_2$ to obtain $v_i$ such that $\det(dv_i) p_2 \circ v_i = p_1$ and near the boundary of $(-1, 1) \times (0, 1)^{n - 1}$ and on the set $\{0\} \times [0, 1]^{n - 1}$, $v_i \equiv Id$. We then map the region $(-1, 1) \times (0, 1)^{n - 1}$ back to $V_i$ via $\varphi_i^{-1}$. Let $u_i = \varphi_i \circ v_i \circ \varphi_i^{-1}$. This is a well defined diffeomorphism on $\mathbb{R}^n$ since $v_i$ can be extended to equal the identity outside $V_i$. Taking $u = u_{1} \circ u_{2} \circ \cdots \circ u_m$, we see that $u$ is a $C^k$ diffeomorphism such that $\lambda \det(du) = h$. Since $h \equiv f$ on a neighborhood of $U$, $\lambda\det(du) = f$ on a neighborhood of $U$. This completes the proof.
\end{proof}
%\begin{corollary}\thlabel{Extension}
%Let $U$ be a bounded connected open set in $\mathbb{R}^n$ with smooth and connected boundary. Suppose $\varphi: \partial U \to \partial U$ is a $C^{k + 1}$ smooth function. Then there exists a $C^k$ diffeomorphism $\psi: \mathbb{R}^n \to \mathbb{R}^n$ such that $\psi|_{\partial U} = \varphi$ and $\psi$ is volume-preserving.
%\end{corollary}
%\begin{proof}
%Using a tubular neighborhood, and taking a convex combination with a smooth bump function, we can extend $\varphi$ to a function $\varphi_1 : \mathbb{R}^n \to \mathbb{R}^n$ such that $\varphi_1$ is a diffeomorphism, $\varphi_1|_{\partial U} = f$, and $h$ is the identity outside a neighborhood of $\partial U$. We may then apply \thref{SmoothBoundary} on $\det(dh)$ on $U$ and $B$ where $h - Id$ is supported inside $B$. 
%\end{proof}
%\begin{proof}[Proof 2]
%The diffeomorphism $\varphi$ extends to a diffeomorphism $\varphi_1$ which is conservative on a tubular neighborhood via $\varphi_1(x, t) = (\varphi(x), \det(d\varphi)^{-1}t)$ and equal to the identity outside a neighborhood of the tubular neighborhood. The Corollary then follows from \cite[Corollary 4]{Av}.
%\end{proof}

\section{Appendix}
\begin{proposition}\thlabel{Potato}
There exists a smooth $\beta \in C^\infty(\mathbb{R}^+ \times \mathbb{R}^+ \times \mathbb{R}; \mathbb{R}^+)$ such that for all $a, b \in \mathbb{R}^+$, $\beta(a, b, x)$ equals $a$ near $x = 0$ and $b$ near $x = 1$, $\beta(1, 1, x) \equiv 1$, and
$$\int_0^1 \beta(a, b, x) dx = 1.$$
%Let $a, b > 0$. Then there exists a smooth $\beta: \mathbb{R} \to \mathbb{R}$ such that $\beta(0) = a$, $\beta(1) = b$, $\beta > 0$,
%$$\int_0^1 \beta(t) dt = 1.$$
%Furthermore, $\beta$ depends smoothly on $a$ and $b$, and if $a = b = 1$, then $\beta \equiv 1$, and $\beta \equiv a$ near $0$ and $\beta \equiv b$ near $1$.
\end{proposition}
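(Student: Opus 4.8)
The plan is to write $\beta$ explicitly as the constant function $1$ corrected by three terms: two ``plateau'' terms enforcing the boundary values $a$ and $b$, and one central bump that restores the normalization $\int_0^1\beta=1$. The one genuinely delicate point will be positivity, and the device that makes it work is to let the plateau near $x=0$ shrink as $a\to\infty$ (and likewise near $x=1$ as $b\to\infty$), so that the mass carried by the plateaus stays uniformly bounded below $1$; this keeps the normalizing coefficient small and hence keeps $\beta$ positive.

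In detail, I would fix once and for all a smooth $U\colon\mathbb{R}\to[0,1]$ with $U\equiv 1$ on $(-\infty,1]$ and $U\equiv 0$ on $[2,\infty)$, and put $c_U=\int_0^2 U(s)\,ds\in(1,2)$. Define the plateau width $\rho(a)=\frac1{16\,c_U\,(1+a)}$, which is smooth and positive on $\mathbb{R}^+$ and satisfies $\rho(a)<\frac1{16}$ and $a\,\rho(a)\,c_U=\frac{a}{16(1+a)}<\frac1{16}$ for every $a>0$. Fix also a smooth $Q\colon\mathbb{R}\to[0,\infty)$ with $\operatorname{supp}Q\subseteq[\frac14,\frac34]$, $\int_0^1 Q=1$, and $\sup Q\le 4$ (such a bump exists, since a nonnegative function supported on an interval of length $\frac12$ with integral $1$ need have supremum only a little above $2$). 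Then I propose
\[
\beta(a,b,x)=1+(a-1)\,U\!\Big(\frac{x}{\rho(a)}\Big)+(b-1)\,U\!\Big(\frac{1-x}{\rho(b)}\Big)+C(a,b)\,Q(x),\qquad C(a,b)=-(a-1)\rho(a)c_U-(b-1)\rho(b)c_U,
\]
where $C(a,b)$ is chosen precisely so that the $x$-integral over $[0,1]$ telescopes to $1$.

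The verification then splits into routine checks. (i) Smoothness: the arguments $x/\rho(a)=16c_U(1+a)x$ and $(1-x)/\rho(b)=16c_U(1+b)(1-x)$ are polynomials in $(a,b,x)$, $\rho$ and $C$ are rational with no poles on $\mathbb{R}^+\times\mathbb{R}^+$, and $U,Q$ are smooth, so $\beta\in C^\infty(\mathbb{R}^+\times\mathbb{R}^+\times\mathbb{R})$. (ii) Boundary values: for $x\le\rho(a)$ one has $U(x/\rho(a))=1$, while $(1-x)/\rho(b)>2$ forces $U((1-x)/\rho(b))=0$ and $x<\frac14$ forces $Q(x)=0$; hence $\beta(a,b,x)\equiv a$ there, and symmetrically $\beta\equiv b$ for $x\ge 1-\rho(b)$. (iii) Normalization: $\int_0^1 U(x/\rho(a))\,dx=\rho(a)c_U$ (and likewise for the $b$-term) while $\int_0^1 Q=1$, so $\int_0^1\beta\,dx=1+(a-1)\rho(a)c_U+(b-1)\rho(b)c_U+C(a,b)=1$. (iv) Finally $\beta(1,1,\cdot)\equiv 1$ is immediate, since each correction carries the factor $a-1$, $b-1$, or $C(1,1)=0$.

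The remaining point, which I expect to be the crux, is positivity of $\beta$ on all of $\mathbb{R}$. Writing $R=1-2\rho(b)$, and using $2\rho(a)<\frac18<\frac78<R$ together with $\operatorname{supp}Q\subseteq[\frac14,\frac34]$, the line splits into $(-\infty,2\rho(a))$, $[2\rho(a),R]$, and $(R,\infty)$, on which $\beta$ reduces respectively to $1+(a-1)U(x/\rho(a))$, $1+C(a,b)Q(x)$, and $1+(b-1)U((1-x)/\rho(b))$. In the first case $\beta\ge\min(1,a)>0$ since $U\in[0,1]$, similarly $\beta\ge\min(1,b)>0$ in the third, and in the middle case, since $\big|\frac{a-1}{16(1+a)}\big|<\frac1{16}$ gives $|C(a,b)|<\frac18$ and $\sup Q\le 4$, we get $|C(a,b)Q(x)|<\frac12$, so $\beta>\frac12$. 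The estimate $|C(a,b)|<\frac18$ — i.e.\ the uniform control on the plateau masses, which is precisely why $\rho$ must depend on the parameters — is the substantive step; the rest is bookkeeping.
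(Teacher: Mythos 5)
Your construction is correct; every step checks out. The argument $|a-1|<1+a$ gives $\bigl|(a-1)\rho(a)c_U\bigr|=\bigl|\tfrac{a-1}{16(1+a)}\bigr|<\tfrac1{16}$, hence $|C(a,b)|<\tfrac18$ and $|C(a,b)Q(x)|<\tfrac12$ as you claim, and the three correction terms have pairwise disjoint supports on $\mathbb{R}$ since $2\rho(a)<\tfrac18<\tfrac14$ and $1-2\rho(b)>\tfrac78>\tfrac34$, so the positivity check on each of your three regions is legitimate. Your formula also equals $a$ on all of $(-\infty,0]$ and $b$ on all of $[1,\infty)$ automatically (because $U$ is a one-sided step and $Q$ is supported in $[\tfrac14,\tfrac34]$), whereas the paper has to declare the extension to those rays separately.

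The paper's proof lives in the same family --- an explicit bump-function construction with a parameter solved from the normalization --- but the decomposition is different. The paper writes $\beta$ as two symmetric bumps $\chi(x/f(a,b))$ and $\chi((x-1)/f(a,b))$ of a \emph{common} width $f(a,b)$ riding on a \emph{variable constant floor} $1-g(a,b)$; the integral condition is solved for $g$ in terms of $f$, and the positivity of the floor ($g<1$) then dictates the admissible choice of $f$, leading them to take $f(a,b)=\tfrac1{C(a+b)+2}$. You instead keep the floor fixed at $1$, use one-sided plateaus of \emph{per-endpoint} widths $\rho(a),\rho(b)$, and restore the integral with a \emph{localized} central bump $Q$ whose coefficient $C(a,b)$ is written in closed form. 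What your route buys: the correction is local rather than global, so the positivity analysis decouples cleanly into three disjoint regions and reduces to a single uniform bound $|C(a,b)|<\tfrac18$; the per-endpoint widths $\rho(a),\rho(b)$ also mean the shrinking that keeps the plateau mass small is tied directly to the parameter it must control. What the paper's route buys: one fewer auxiliary function (no $Q$) and an arguably shorter formula, at the cost of having to track that the adjusted floor stays positive and of needing the ad hoc redefinition of $\beta$ on $(-\infty,0]$ and $[1,\infty)$.
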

\begin{proof}
Let $\chi: \mathbb{R} \to [0, 1]$ be a $C^\infty$ (symmetric) smooth function such that it is $1$ near a neighborhood of $0$ and $0$ outside $B_1(0)$. Let $C = \int_0^1 \chi(x) dx$. We construct a function of the form
\begin{equation}
\beta(a, b, x) = (a - 1 + g(a, b)) \chi\left(\frac{x}{f(a, b)}\right) + 1 - g(a, b) + (b - 1 + g(a, b)) \chi\left(\frac{x - 1}{f(a, b)}\right)\tag{1}
\end{equation}
where $f \in C^\infty(\mathbb{R}^+ \times \mathbb{R}^+; \mathbb{R}^+)$, $g \in C^\infty(\mathbb{R}^+ \times \mathbb{R}^+)$ will be constructed later. Geometrically, this function is two bump functions superpositioned at $0$ and $1$, respectively and given a value of $a$ at $0$ and $b$ at $1$. Factors $f(a, b)$ localize the bump functions at $0$ and $1$ so their supports do not intersect. A term $g(a, b) < 1$ is needed to ensure that the integral over $x$ of the above is equal to $1$. Noticing that $$\int_{\mathbb{R}^+} \chi\left(\frac{x}{f(a, b)}\right) dx = f(a, b) \int_{\mathbb{R}^+} \chi(x) dx = f(a, b)C$$
and integrating (1) over the positive reals and setting it equal to $1$, we obtain
$$Cf(a, b)(a + b - 2 + 2g(a, b)) - g(a, b) + 1 = 1$$
which rearranges as
\begin{equation}
Cf(a, b)(a + b - 2) = g(a, b)(1 - 2Cf(a, b))\tag{2}
\end{equation}
The constraint $g(a, b) < 1$ gives us an additional condition of
$$C\frac{f(a, b)(a + b - 2)}{(1 - 2Cf(a, b))} < 1 \hspace{0.1in} \text{given that} \hspace{0.1in} 1 - 2Cf(a, b) \neq 0.$$
We shall assume the latter condition that $1 - 2Cf(a, b) \neq 0$. Since the two supports of the bump functions don't intersect, $f(a, b) < \frac{1}{2}$. This translates to $1 - 2Cf(a, b) > 0$ where we note that $0 < C < 1$. We obtain
$$Cf(a, b)(a + b - 2) < 1 - 2Cf(a, b) \iff Cf(a, b)(a + b) < 1 \iff f(a, b) < \frac{1}{C(a + b)}.$$
Thus, choosing $f(a, b) = \frac{1}{C(a + b) + 2}$, we obtain such an $f$ and thus such a $g$ from (2). Notice that if $a = b = 1$, then $g \equiv 0$ and $\beta \equiv 1$. Since $\beta \equiv a$ near $0$ and $\beta \equiv b$ near $1$, we may just set $\beta$ to equal $a$ at $(-\infty, 0]$ and $\beta$ to equal $b$ at $[1, \infty)$. Since $f(a, b)$ and $g(a, b)$ are both smooth, this construction varies smoothly on $a$ and $b$. This proves the claim.
\end{proof}

\begin{proposition}\thlabel{Canonical2}
There exists a smooth function $\beta \in C^\infty(\mathbb{R}^+ \times \mathbb{R}^+ \times \mathbb{R}^+ \times \mathbb{R}; \mathbb{R}^+)$ such that for all $a, b, c \in \mathbb{R}^+$, $\beta(a, b, c, x)$ equals $a$ for $x$ near $0$, equals $b$ for $x$ near $1$, equals $c$ for $x$ near $-1$, $\beta(1, 1, 1, x) \equiv 1$, and
$$\int_{-1}^0 \beta(a, b, c, x) dx = \int_0^1 \beta(a, b, c, x) dx = 1.$$
%Let $a, b, c > 0$. Then there exists a smooth $\beta: \mathbb{R} \to \mathbb{R}$ such that $\beta(0) = a$, $\beta(1) = b$, $\beta(-1) = c$ $\beta > 0$,
%$$\int_0^1 \beta(t) dt = \int_{-1}^0 \beta(t) dt = 1.$$
%Furthermore, $\beta$ depends smoothly on $a, b, c$ and if $a = b = c = 1$, then $\beta \equiv 1$.
\end{proposition}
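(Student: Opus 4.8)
The plan is to obtain $\beta$ by gluing together two mirror‑image copies of the function already furnished by Proposition \thref{Potato}. Write $\beta_0 \in C^\infty(\mathbb{R}^+\times\mathbb{R}^+\times\mathbb{R};\mathbb{R}^+)$ for that function, so $\beta_0(a,b,x)$ equals $a$ for $x$ near $0$, equals $b$ for $x$ near $1$, satisfies $\beta_0(1,1,x)\equiv 1$, has $\int_0^1\beta_0(a,b,x)\,dx=1$, and is jointly smooth and positive. I would then set
$$\beta(a,b,c,x) = \begin{cases}\beta_0(a,b,x) & \text{if } x \ge 0,\\[2pt] \beta_0(a,c,-x) & \text{if } x \le 0.\end{cases}$$
The two clauses agree on the overlap $x=0$, where both equal $\beta_0(\cdot,\cdot,0)=a$, so the definition is unambiguous, and $\beta>0$ since $\beta_0>0$.

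Granting smoothness for a moment, the remaining properties are immediate. For $x$ near $1$ (hence $x>0$) we get $\beta=\beta_0(a,b,x)=b$; for $x$ near $-1$ (hence $x<0$) we get $\beta=\beta_0(a,c,-x)=c$ because $-x$ is near $1$; and for $x$ near $0$ the corresponding local‑constancy of $\beta_0$ gives $\beta=a$ from both sides. Putting $a=b=c=1$ yields $\beta(1,1,1,x)\equiv 1$. For the integrals, $\int_0^1\beta(a,b,c,x)\,dx=\int_0^1\beta_0(a,b,x)\,dx=1$ directly from \thref{Potato}, while the substitution $u=-x$ turns $\int_{-1}^0\beta(a,b,c,x)\,dx$ into $\int_0^1\beta_0(a,c,u)\,du=1$.

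The one point requiring a genuine argument—and the only place I expect any difficulty—is $C^\infty$ smoothness of $\beta$ in $(a,b,c,x)$ jointly, which is in question only along $\{x=0\}$. The key observation is that near $x=0$ the function $\beta_0$ is not merely constant in $x$ but locally constant in all of its arguments: inspecting the construction in the proof of \thref{Potato}, $\beta_0(a,b,x)=a$ for every $x<\delta f(a,b)$, where $\delta>0$ is fixed and $f(a,b)$ is continuous and strictly positive. Hence around any point $(a_0,b_0,c_0,0)$ there is a neighborhood $N$ in $(a,b,c,x)$‑space on which $\beta_0(a,b,x)\equiv a$ and $\beta_0(a,c,-x)\equiv a$ simultaneously, so $\beta\equiv a$ on $N$ and is trivially smooth there; away from $\{x=0\}$, $\beta$ is a composition of the smooth $\beta_0$ with smooth maps. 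This gluing check is mild, and everything else reduces directly to \thref{Potato} and a change of variables, so I would present the whole argument in a few lines.
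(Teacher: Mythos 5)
Your proof is correct and takes essentially the same approach as the paper: glue two mirror-image copies of the function from \thref{Potato} along $x=0$. Two small points of comparison. First, the paper's proof as written sets the negative-$x$ piece to $\beta_1(c,a,-x)$, which would give the value $c$ near $x=0$ and $a$ near $x=-1$ -- the opposite of what is required; your formula $\beta_0(a,c,-x)$ is the correct one, and the paper's appears to be a typo (its subsequent claim that the two pieces agree near $x=0$ only holds with your ordering of the arguments). Second, you supply a more careful smoothness argument than the paper's one-line assertion: you observe that, from the construction in \thref{Potato}, $\beta_0(a,b,x)\equiv a$ for $|x|<\delta f(a,b)$ with $f>0$ continuous, so around any point of $\{x=0\}$ there is a full neighborhood in $(a,b,c,x)$ on which $\beta$ coincides with the smooth map $(a,b,c,x)\mapsto a$. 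This uniformity in the parameters is exactly what is needed to conclude joint $C^\infty$ smoothness of the glued function, and the paper leaves it implicit.
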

\begin{proof}
Take the $\beta_1$ constructed in \thref{Potato} and construct $\beta$ as $\beta_1(a, b, x)$ for $x \ge 0$ and $\beta_1(c, a, -x)$ for $x \le 0$. Then since $\beta_1(a, b, x) \equiv \beta_1(c, a, -x)$ near $x = 0$, it follows that $\beta$ is smooth. Then the properties of $\beta_1$ carry through so it follows that $\beta$ is the desired function.
%We can repeat the exact same construction as above or apply the above theorem for $(a, b)$ to obtain $\beta_1$ and $(c, a)$ to obtain $\beta_2$. Then set $\beta$ equal to $\beta_1$ at $[0, 1]$ and equal to $\beta_2(x + 1)$ at $[-1, 0]$ and set $\beta$ equal to $c$ at $(-\infty, -1]$ and $\beta$ equal to $b$ at $[1, \infty)$. This is a $C^\infty$ function since $\beta \equiv a$ near $0$ and $\beta \equiv b$ near $1$ and $\beta \equiv c$ near $-1$ and since $\beta_1$ and $\beta_2(x + 1)$ are both smooth.
\end{proof}

\begin{lemma}\thlabel{Dumb}
Let $U$ be a connected bounded open set in $\mathbb{R}^n$ with smooth connected boundary, $B$ a large open ball of radius $r$ centered at some point $x_0$ that contains $\overline{U}$. Then there exists a finite cover $(U_j)_{j = 0}^m$ of $B$ such that $U_j$ is diffeomorphic to $(0, 1)^n$, each $U_j$ intersects $\partial U$, and $U_j \cap \partial U$ is diffeomorphic to $(0, 1)^{n - 1}$. Furthermore, the covering $U_i$ can be chosen such that for $B$ with sufficiently large radius, $U_j \cap U$ does not change, $m$ does not depend on the radius of $B$, and there exists $C > 0$ such that
$$\frac{\mu(B)}{\min_{U_i \cap U_j \neq 0} \mu(U_i \cap U_j )} \le C.$$
\end{lemma}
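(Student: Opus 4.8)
The plan is to build the cover $(U_j)$ from a tubular neighborhood of $\partial U$ together with finitely many interior coordinate charts, arranged so that only the interior charts grow with $r$. Since $\partial U$ is a compact smooth hypersurface, the normal exponential map gives a tubular neighborhood $N_\delta = \{x : \operatorname{dist}(x, \partial U) < \delta\}$ diffeomorphic to $\partial U \times (-\delta, \delta)$ for $\delta$ small (independent of $r$). First I would cover the compact manifold $\partial U$ by finitely many charts $W_1, \dots, W_\ell$ with each $W_\alpha \cong (0,1)^{n-1}$ and with controlled overlaps, which is possible since $\partial U$ is a fixed compact manifold; pulling these back through the tubular neighborhood diffeomorphism and thickening in the normal direction produces sets $U_1, \dots, U_\ell$, each diffeomorphic to $(0,1)^{n-1} \times (0,1) \cong (0,1)^n$, each meeting $\partial U$ in a set diffeomorphic to $(0,1)^{n-1}$, and all contained in $N_\delta \subset B$ regardless of how large $r$ is. These $\ell$ sets, their number, their pairwise intersections, and the portion $U_j \cap U$ they determine are therefore completely independent of $r$.

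Next I would handle the remaining part of $B$, namely $B \setminus N_{\delta/2}$, which splits into the ``inside'' piece $U \setminus N_{\delta/2}$ (a fixed compact set, coverable by a fixed finite number of charts diffeomorphic to $(0,1)^n$, each of which I arrange to also poke into $N_\delta$ so that it meets $\partial U$) and the ``outside'' piece $B \setminus (U \cup N_{\delta/2})$, which is the part that genuinely grows with $r$. The key point is that this outside annular region is diffeomorphic to (a neighborhood of) $\partial U \times [\delta/2, \infty)$ intersected with $B$, and I can cover it by a number of charts $U_j$ that is \emph{bounded independent of }$r$: essentially one ``shell'' of $\ell$ charts near $\partial U$ suffices once we push outward, because any bounded-geometry annulus $\partial U \times [a,b]$ can be covered by $O(\ell)$ charts with the overlaps of each chart with its neighbors comparable to a fixed constant times $\mu$ of a unit cube. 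Concretely I would take charts of the form $W_\alpha \times (\delta/2, r')$ (radially elongated) so that adding more radial length does not require more charts; I must of course also demand each such $U_j$ dips back to meet $\partial U$, which I do by letting its radial interval start at $0$. All of these charts are arranged to be diffeomorphic to $(0,1)^n$ by the obvious rescaling.

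It then remains to verify the quantitative bound $\mu(B) / \min_{U_i \cap U_j \neq \emptyset} \mu(U_i \cap U_j) \le C$ with $C$ independent of $r$. Here $\mu(B) \sim r^n$. The overlaps among the fixed charts (near $\partial U$ and in the interior) have volumes bounded below by a constant depending only on $U$. The overlaps among the radially elongated outside charts, however, also grow: two radially adjacent shell-charts overlap in a region of volume $\sim r^{n-1}\cdot(\text{radial overlap length})$, and I would choose the radial decomposition so the radial overlap length of the outermost pair is $\sim r$, giving overlap volume $\sim r^n$, hence a ratio bounded independent of $r$. This is the step I expect to be the main obstacle: one must choose the radial cut-points (and how charts telescope outward) carefully so that \emph{every} pairwise intersection that is nonempty has volume $\gtrsim r^n$ (or at least $\gtrsim$ a fixed fraction of $\mu(B)$) — in particular the small-scale overlaps near $\partial U$ must not spoil the bound, which forces the ``min'' to effectively be taken only over the large outside overlaps, so I need the near-boundary charts' mutual overlaps to also be comparable to $\mu(B)$, or else to argue that those overlaps can be taken to have volume bounded below by a fixed constant while simultaneously $\mu(B)$ is bounded below by a fixed constant too (since $r$ is ``large'', $\mu(B) \ge \mu(B_{r_0}) > 0$), so really only the \emph{growth rate} in $r$ needs to match, and a fixed lower bound on the small overlaps combined with the matching $r^n$ growth of the large overlaps suffices. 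Assembling these, the constant $C$ depends only on $U$, $\delta$, and the fixed combinatorics of the cover, completing the proof.
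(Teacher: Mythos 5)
Your setup is reasonable and is in the same spirit as the paper's: both start from a cover of $B$ adapted to $\partial U$ and then enlarge charts radially so that overlaps can grow with $r$. But you correctly identify the crux — the small near-boundary and interior overlaps must not spoil the bound — and then the ``or else'' branch of your proposed resolution is wrong. The quantity to bound is $\mu(B)/\min_{U_i\cap U_j\neq\emptyset}\mu(U_i\cap U_j)$ from \emph{above} by a constant independent of $r$. If the near-boundary charts keep pairwise overlaps of fixed volume $\sim 1$ while $\mu(B)\sim r^n\to\infty$, that ratio tends to infinity; the observation that $\mu(B)$ is also bounded \emph{below} is irrelevant and ``matching growth rates'' does not help unless the small overlaps themselves grow like $r^n$. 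Your first alternative (``I need the near-boundary charts' mutual overlaps to also be comparable to $\mu(B)$'') is the correct one, but you do not actually implement it — you leave the near-boundary and interior charts inside the fixed tubular neighborhood, where their mutual overlaps cannot grow.

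The paper resolves exactly this point: after arranging that every chart reaches $\partial U$ (by attaching tubular neighborhoods of paths, as you do implicitly), it further elongates \emph{every} $U_j$ to reach $\partial B$, and then attaches to each $U_j$ a large annular sector $O_i' = O_i \cap (B'\setminus B'')$ where the sectors $O_i$ each have measure nearly $\mu(B')$. Since any two such sectors overlap in volume $\sim r'^n - r^n$, \emph{every} nonempty pairwise intersection $U_i\cap U_j$ becomes $\gtrsim r'^n - r^n$, not just those among the ``outer shell'' charts, and the bound $\mu(B')/(r'^n - r^n)\lesssim 1$ then holds uniformly. To repair your argument you should do the same: attach a large annular sector to each of your near-boundary charts $U_1,\dots,U_\ell$ and to each interior chart (after elongating it outward through $\partial U$), not only to the charts covering $B\setminus(U\cup N_{\delta/2})$. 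Without that, the minimum in the denominator is attained by a fixed-size overlap and the claimed bound fails.
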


\begin{proof}
Let $V_j$ be a finite cover of $B$ by open balls of a fixed radius such that each $V_j \cap \partial U$ is either empty or diffeomorphic to $(0, 1)^{n - 1}$. We define $U_j$ as follows: if $V_j$ intersects $\partial U$, then we define $U_j$ to be $V_j$. Otherwise, take a line segment from the center of $V_j$ that intersects $\partial U$ once at a point $q_j$. Let $V_{j'}$ cover $q_j$. If $V_j$ and $V_{j'}$ intersect, then we define $U_j$ to be $V_j \cup V_{j'}$. Otherwise, we define $U_j$ to be a small tubular neighborhood of the line segment contained in all the open balls the line segment intersects together with $V_j$ and $V_{j'}$. This covering covers all of $B$ since $V_j$ covers all of $B$ and since $U_i$ is composed of either two balls of the same radius or a hypercube and two balls of the same radius, it is contractible and diffeomorphic to $(0, 1)^n$. Furthermore, by contruction $U_j \cap \partial U$ is diffeomorphic to $(0, 1)^{n - 1}$.\\\\
Let
$$p = \frac{\mu(B)}{\min_{U_i \cap U_j \neq \emptyset} \mu(U_i \cap U_j)}.$$
We show that for sufficiently large radius of $B$, the cover $U_i$ can be chosen such that $p \lesssim 1$ and $m$ is fixed. Suppose $B'$ is an open ball with radius $r'$ that contains $B$ with center $x_0$, $B''$ an open ball with center $x_0$ and with radius $r'' < r$. We may elongate the portion of $U_i$ that lies outside $U$ so that it intersects the boundary of $B$. This can be done by taking the union of $U_i$ with a small tubular neighborhood around a simple path from a point in $U_i \cap (B \setminus U)$ to $\partial B$ that does not intersect $U$. Let $(O_i)_{i = 1}^N$ be open sectors of $B'$ such that $\mu(O_i)$ is nearly $\mu(B')$ and such that they cover $B' \setminus \{x_0\}$. Take $O_i' = O_i \cap B' \setminus B''$. Each $U_j$ touches some $O_i'$ so we may redefine $U_j$ to be $U_j \cup O_i'$. Thus, $m$ does not depend on the radius of $U$. Let $r$ denote the radius of $B'$ Noticing that $\mu(U_i \cap U_j) \sim r'^n - r^n$, and $\mu(B') \sim r^n$, we do have $p \lesssim 1$ for sufficiently large $r'$. Since the redefined $U_i$ just has an additional annular sector attached to it or just a tubular neighborhood of a simple path, it is diffeomorphic to $(0, 1)^{n}$, and since the intersection $U_i \cap \partial U$ does not change, $U_i \cap \partial U$ is still diffeoorphic to $(0, 1)^{n - 1}$. This proves the lemma.  
\end{proof}

\end{document}